\numberwithin{equation}{section}
\theoremstyle{plain}
\newtheorem{lem}[equation]{Lemma}
\newtheorem{prop}[equation]{Proposition}
\newtheorem{thm}[equation]{Theorem}
\newtheorem{cor}[equation]{Corollary}
\newtheorem{conj}[equation]{Conjecture}
\theoremstyle{definition}
\newtheorem{definition}[equation]{Definition}
\newtheorem{remark}[equation]{Remark}
\newtheorem{example}[equation]{Example}
\newcommand{\field}[1]{\mathbb{#1}}
\newcommand{\naturals}{\ensuremath{\field{N}}}
\newcommand{\C}{\mathbb C}
\newcommand{\G}{\mathcal G}
\newcommand{\R}{\mathcal R}
\newcommand{\Aut}[1]{\text{Aut}(#1)}
\newcommand{\acts}{\curvearrowright}
\newcommand{\grade}{\operatorname{grade}}
\newcommand{\proj}{\operatorname{Proj}}
\let\diameter\relax \DeclareMathOperator{\diameter}{\text{diameter}}
\newcommand{\systole}[1]{\ensuremath{\|{#1}\|}}
\newcommand{\nclose}[1]{\ensuremath{\langle\!\langle#1\rangle\!\rangle}}
\newcommand{\noobla}[2]{\ensuremath{\vert #1\vert_{#2}}}
\DeclareMathOperator{\intersector}{Intersector}
\newcommand{\subpres}{\circledast}
\newcommand{\showcomments}{yes}
\newsavebox{\commentbox}
\newcommand*{\vertchar}[2][0pt]{%
  \tikz[
    inner sep=0pt,
    shorten >=-.15ex,
    shorten <=-.15ex,
    line cap=round,
    baseline=(c.base),
  ]\draw [line width=0.05mm,  black]
    (0,0) node (c) {#2}
    ($(c.south)+(#1,0)$) -- ($(c.north)+(#1,0)$);%
}
\newcommand{\pstab}{\operatorname{\textup{\vertchar{S}tab}}}
\newcommand{\stab}{\operatorname{Stab}}
\title{Virtual Specialness of certain graphs of special cube complexes}
\author{Jingyin Huang}
\address{Dept. of Math. \& Stats.\\
	McGill University \\
	Montreal, Quebec, Canada}
\curraddr{Department of Mathematics, The Ohio State University, 231 W. 18th Ave, Columbus, Ohio, U.S. 43210}
\email{huang.929@osu.edu}
\author{Daniel T. Wise}
\address{Dept. of Math. \& Stats.\\
	McGill University \\
	Montreal, Quebec, Canada}
\email{daniel.wise@mcgill.ca}
\begin{document}
\maketitle

\begin{abstract}
We investigate the virtual specialness of a compact cube complex $X$ that splits as a graph
of nonpositively curved cube complexes.
We prove virtual specialness of $X$ when each vertex space of $X$ has word-hyperbolic $\pi_1$ and $\pi_1X$ has ``finite stature'' relative to its edge groups. 
The results generalize the motivating case when tree $\times$ tree lattices are virtual products.
\end{abstract}

	\tableofcontents
\section{Introduction}	

Recall the following notion which was introduced in \cite{GMRS98}:
\begin{definition}[Height]
	\label{def:height}
	A subgroup $H\le G$ has \emph{finite height} if there do not exist
	arbitrarily long sequences $\{g_1,\ldots, g_h\}$ such that $g_iH\neq g_j H$ for $i\neq j$ and 
	such that
	$\cap_{n=1}^h H^{g_n}$ is infinite.
	We use the notation $H^g=gHg^{-1}$.
	The \emph{height} of $H$  is the supremal length of such sequences.
	
	More generally, a finite collection $\mathcal H = \{H_1,\ldots, H_r\}$ of subgroups has \emph{finite height} if each $H_k$ has finite height.
\end{definition}

\begin{definition}
	\label{def:finite stature}
	Let $G$ be a group and let $\Omega = \{G_\lambda\}_{\lambda\in\Lambda}$ be a collection of subgroups of $G$. Then $(G,\Omega)$ has \emph{finite stature} if for each $H\in \Omega$, there are finitely many $H$-conjugacy classes of infinite subgroups of form $H\cap C$, where $C$ is an intersection of (possibly infinitely many) $G$-conjugates of elements of $\Omega$.
\end{definition}

For example, if $G$ is abelian, then $(G,\Omega)$ has finite stature for any finite collection $\Omega$ of subgroups of $G$. However, $\Omega$ has infinite height if some element of $\Omega$ is infinite and of infinite index. We refer Section~\ref{subsec:FGH} for more examples with finite stature but infinite height.
The notion of finite stature was introduced in
 \cite{HuangWiseSeparability} to which we refer the reader for additional examples and viewpoints. An alternative, but equivalent definition of finite stature is given in Definition~\ref{def:finite stature1}. 

\begin{definition}
	\label{def:graph of cube complexes}
	A cube complex $X$ splits as a 
	\emph{graph of nonpositively curved cube complexes}
	if it is built as follows:
There is an \emph{underlying graph} $\Gamma$, and for each vertex $v$ and edge $e$ of $\Gamma$, we associate a \emph{vertex space} $X_v$ and an \emph{edge space} $X_e$, which are nonpositively curved cube complex. $X$ is obtained from the disjoint union of the vertex spaces $X_v$ and the \emph{thickened} edge spaces $X_e\times[-1,1]$ by gluing each subcomplex $X_e\times\{-1\}$ to  $X_{\iota(e)}$ through the \emph{attaching map} $X_e\rightarrow X_{\iota(e)}$, and similarly each subcomplex $X_e\times\{1\}$ is attached to  $X_{\tau(e)}$ through $X_e\rightarrow X_{\tau(e)}$. The attaching maps are local-isometries.
\end{definition}

We review the notion of special cube complex in Section~\ref{subsec:cube complex}.
 A cube complex is \emph{virtually special} if it has a finite degree cover that is special.

\begin{thm}
	\label{thm:main}
	Let $X$ split as a graph of nonpositively curved cube complexes.
	Suppose $X$ is compact and the fundamental group of each vertex space is word-hyperbolic.
	Then the following are equivalent.
	\begin{enumerate}
		\item $\pi_1 X$ has finite stature with respect to its vertex groups.
		\item $X$ is virtually special.
	\end{enumerate}
\end{thm}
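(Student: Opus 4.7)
The plan is to prove the two implications separately, with the easy direction $(2)\Rightarrow(1)$ following from general facts about virtually special groups, and the main content lying in $(1)\Rightarrow(2)$.

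For $(2)\Rightarrow(1)$, I would argue as follows. If $X$ has a finite special cover $\widehat{X}\to X$, then $\pi_1\widehat{X}$ is virtually compact special, so by Haglund's work on quasiconvex subgroups in special cube complexes, quasiconvex subgroups of the word-hyperbolic vertex groups of $\widehat{X}$ are separable in $\pi_1\widehat{X}$, and in fact double-coset separability holds. Combined with hyperbolicity of vertex groups and a height-style argument, this yields only finitely many conjugacy classes of infinite intersections $H\cap C$ as required by Definition~\ref{def:finite stature}. Finite stature then transfers between $\pi_1\widehat{X}$ and $\pi_1X$ via a standard commensurability argument, using that vertex groups of $X$ and vertex groups of $\widehat{X}$ correspond up to finite index.

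For $(1)\Rightarrow(2)$, each $\pi_1X_v$ is word-hyperbolic and acts properly cocompactly on the universal cover of $X_v$, so by Agol's theorem $\pi_1X_v$ is virtually special. After passing to a finite cover of $X$ that is well-behaved on each vertex space, I would reduce to the case where every $X_v$ is already special and each edge-group image is quasiconvex and separable in the ambient vertex group. Walls of $X$ are then assembled by extending hyperplanes of vertex spaces through thickened edge spaces via the local-isometry attaching maps, and their stabilizers together with their conjugate intersections are exactly the subgroups controlled by the finite stature hypothesis.

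To establish specialness, one must kill each of the four hyperplane pathologies globally. Finite stature implies that, up to the $\pi_1X$-action, only finitely many orbit-types of wall-pair interactions require resolution, and for each such type the separability of quasiconvex subgroups inside the already-special vertex spaces provides a finite cover that resolves the pathology locally. The main obstacle I anticipate is the gluing step: the vertex covers selected at different vertices must be compatible along edge spaces in order to assemble into a finite cover of $X$. In the absence of almost-malnormality of edge groups — which would allow a direct invocation of a malnormal combination theorem — I would handle this via an iterated canonical-completion construction along the underlying graph $\Gamma$, whose termination relies precisely on the finiteness provided by Definition~\ref{def:finite stature}. This replacement of almost-malnormality by finite stature, and the resulting need to track wall combinatorics across many vertex spaces simultaneously, is where the essential new difficulty lies beyond the motivating tree $\times$ tree case.
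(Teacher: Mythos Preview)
Your outline for $(2)\Rightarrow(1)$ is workable in spirit but heavier than necessary, and it leans on hyperbolicity where the paper does not. The paper's argument (Theorem~\ref{thm:FGH} and Corollary~\ref{cor:FGH}) is purely cubical: in a finite special cover, elevations of the $A_i$ embed as locally convex subcomplexes, and Lemma~\ref{lem:intersection} shows that the intersection of stabilizers of two convex subcomplexes is realized as the stabilizer of the cubical projection of one onto the other, which is again a compact locally convex subcomplex. Finitely many subcomplexes of a compact complex then give finite stature directly. No separability or height argument enters, and no hyperbolicity is used.

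For $(1)\Rightarrow(2)$ there is a genuine gap. Your proposed mechanism is to build compatible finite covers of the vertex spaces via an ``iterated canonical-completion construction along $\Gamma$'' and to use finite stature to force termination. This is the natural first attempt, but it is not what the paper does, and no such direct gluing argument is known to succeed here: canonical completion controls one local-isometry at a time and gives no a priori compatibility across two edge maps into the same vertex space, while iterating it tends to enlarge the covers without any obvious well-founded descent tied to Definition~\ref{def:finite stature}. You identify the obstacle correctly but do not supply a replacement for almost-malnormality.

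The paper's route is qualitatively different. Rather than building covers upward, it takes compatible \emph{quotients} downward. Proposition~\ref{prop:quotients} produces, for each vertex group $V$, a quotient $V\to\bar V$ killing carefully chosen finite-index subgroups of the lowest transections; these quotients are compatible across edges and strictly decrease the depth $\delta(G,\mathcal E)$. Iterating yields a quotient $G\to\bar G$ with finite edge groups, hence $\bar G$ virtually free. Proposition~\ref{prop:graded} realizes each $V\to\bar V$ by a graded $C'(\tfrac{1}{24})$ cubical presentation $X_V^*$ with well-embedded cones; this is what forces the intermediate cover $\widehat X$ corresponding to $\ker(G\to\bar G)$ to satisfy the hypotheses of Theorem~\ref{thm:graph of cube complexes} and hence be special (Lemma~\ref{lem:intermediate special cover}). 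Finally one uses that $\bar G$ is virtually free to extract a finite special quotient of $\widehat X$. The essential new idea you are missing is this depth-reducing small-cancellation quotient, not a cover-assembly procedure.
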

We conjecture that Theorem~\ref{thm:main} holds without assuming the vertex groups are word-hyperbolic. Actually, the direction $(2)\Rightarrow(1)$ follows from Theorem~\ref{thm:FGH}, which does not rely on this hyperbolicity assumption.

\begin{conj}
	\label{conj:main}
	Let $X$ split as a graph of nonpositively curved cube complexes.
	Suppose $X$  is compact and  each vertex space is virtually special. Then the following are equivalent.
	\begin{enumerate}
		\item $\pi_1 X$ has finite stature with respect its vertex groups.
		\item $X$ is virtually special.
	\end{enumerate}
\end{conj}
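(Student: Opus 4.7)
The direction (2) $\Rightarrow$ (1) is handled by Theorem~\ref{thm:FGH}, which does not use hyperbolicity, so the entire task is to produce a finite special cover of $X$ from finite stature and virtual specialness of the vertex spaces. My plan is to imitate the architecture of the proof of Theorem~\ref{thm:main}, but to systematically replace every appeal to word-hyperbolicity (and the Agol/QCERF machinery it enables) by an appeal to finite stature together with virtual specialness of vertex and edge spaces. The first step is to pass each vertex space to a finite special cover; by taking a common refinement over the finite graph $\Ga$, I may assume every vertex space $X_v$ and every edge space $X_e$ is special, and that each attaching map $X_e\to X_{\iota(e)}$ lifts to a local isometry between fixed chosen special covers.

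The heart of the argument is then to assemble these local covers into a finite cover $\widehat X\to X$. For this I would build a tower of covers vertex-by-vertex, at each stage enlarging the cover of $X_v$ so that (a) each relevant coset of an edge group in $\pi_1X_v$ is separated, and (b) every cover already chosen on a neighboring vertex lifts consistently. The finite stature hypothesis is precisely what makes this procedure terminate: it guarantees that, up to conjugacy in $\pi_1 X_v$, only finitely many infinite intersections $\pi_1 X_v\cap C$ arise as $C$ ranges over intersections of conjugates of vertex groups, so only finitely many incompatibility patterns between edge groups need to be separated at $v$. Combined with the separability consequences of specialness (double coset separability for subgroups arising from intersections of edge groups), one obtains a finite cover $\widehat X\to X$ that is itself a graph of special cube complexes with embedded edge spaces.

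It then remains to verify the Haglund--Wise hyperplane conditions on $\widehat X$. Two-sidedness and non-self-intersection of hyperplanes propagate from specialness of the vertex spaces, since every hyperplane of $\widehat X$ is built by gluing hyperplanes along cubes coming from thickened edge spaces. The more delicate conditions, no self-osculation and no inter-osculation, translate into statements about which cosets of edge-hyperplane stabilizers in the vertex groups can be forced to coincide or to nest. I would handle each of these locally at a vertex $X_v$ by passing to a further finite cover chosen to separate the offending double cosets; finite stature again ensures there are only finitely many conjugacy types of such double cosets to handle, and specialness of $X_v$ ensures the required separations exist in some finite cover.

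The main obstacle, and the reason the hyperbolic assumption was used in Theorem~\ref{thm:main}, is step two: without hyperbolicity one loses the quasiconvex hierarchy machinery and Agol's theorem, which together let one automatically promote separability of edge groups to the stronger malnormal-style separations needed to assemble the tower. The substitute has to come entirely from finite stature, which supplies the \emph{correct} finiteness (finitely many conjugacy classes of infinite core intersections) but not automatically the separability of those cores in $\pi_1 X_v$. Thus the real work of the conjecture is to show that, in the presence of virtual specialness of each vertex space, finite stature already forces separability of every infinite intersection subgroup appearing in Definition~\ref{def:finite stature}; once this is established, the cover-assembly argument above goes through and $\widehat X$ is special.
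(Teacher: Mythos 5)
The statement you are trying to prove is explicitly a \emph{conjecture} in the paper, not a theorem; the paper proves the equivalence only under the additional hypothesis that each vertex group is word-hyperbolic (Theorem~\ref{thm:main}), and right below that theorem states that removing the hyperbolicity assumption is conjectural. Your proposal is therefore not, and cannot straightforwardly be, a proof: you acknowledge this yourself in the final paragraph, where you locate the missing ingredient (separability of the intersection subgroups appearing in the finite-stature condition) and describe the rest of the argument as conditional on supplying it. That gap is genuine and is essentially why the statement remains open. Your treatment of $(2)\Rightarrow(1)$ is fine --- it does follow from Theorem~\ref{thm:FGH} with no hyperbolicity --- but that was never in doubt.

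Beyond the acknowledged gap, the architecture you sketch for $(1)\Rightarrow(2)$ also diverges substantially from what the paper does in the hyperbolic case, and the divergence points at further difficulties. The paper does not inductively assemble compatible finite covers of the vertex spaces and then check the Haglund--Wise osculation conditions directly. Instead it uses a graded cubical small-cancellation quotient (Proposition~\ref{prop:graded}) to kill the edge groups modulo finite normal subgroups, reducing the graph of groups to one with finite edge groups (so $\bar G$ is virtually free), and then it verifies specialness of the corresponding intermediate cover $\widehat X$ via the Helly-type intersection property of relators/elevations (Lemma~\ref{lem:intersection of cones}, Corollary~\ref{cor:connected intersection}) together with Theorems~\ref{thm:shorter profinite criterion}~and~\ref{thm:graph of cube complexes}. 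Crucially, separability of the relevant subgroups is \emph{not} sufficient even in the hyperbolic case: the argument needs the well-embedded-cones/Helly property, which is extracted from the small-cancellation structure (Lemma~\ref{lem:well-embedded cones}), and the construction of the small-cancellation presentation itself relies on superconvex cocompact cores (Lemma~\ref{lem:superconvex core}) and the finiteness of commensurators of quasiconvex subgroups (Lemma~\ref{lem:commensurator}), both of which are where hyperbolicity enters. So even granting the separability you want, your cover-assembly scheme would not obviously give the triple-intersection (Helly) control that the paper's proof actually uses; that is an additional obstacle beyond the one you identified, and it is why simply ``separate the finitely many bad double cosets'' is unlikely to close the argument.
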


In the long term, we hope the following more complete characterization of virtual specialness will become approachable. The ``only if'' direction of this conjecture holds by Theorem~\ref{thm:FGH}. 
\begin{conj}
	Let $X$ be a compact non-positively curved cube complex. Then $X$ is virtually special if and only if $\pi_1 X$ has finite stature with respect to the collection of its hyperplane stabilizers.
\end{conj}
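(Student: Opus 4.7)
The ``only if'' direction is credited to Theorem~\ref{thm:FGH}, so the substantive task is the reverse: given that $\pi_1 X$ has finite stature relative to its hyperplane stabilizers, produce a finite special cover of $X$. My plan is to reduce the statement to Conjecture~\ref{conj:main} by converting the intrinsic hyperplane structure of $X$ into an extrinsic graph-of-spaces decomposition. After passing to a finite cover that resolves the easy hyperplane pathologies (self-intersections, one-sidedness, direct self-osculations), select a single $\pi_1 X$-orbit of two-sided embedded hyperplanes and cut $X$ along it. The resulting complex $Y$ splits as a graph of nonpositively curved cube complexes with vertex spaces the components of the cut, edge spaces copies of the chosen hyperplanes, and $\pi_1 Y \cong \pi_1 X$. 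Iterating over the finitely many orbits of hyperplanes and inducting on dimension, the vertex spaces can eventually be arranged to be virtually special, so that the hypotheses of Conjecture~\ref{conj:main} become applicable.

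The first main step is to verify that the finite stature of $(\pi_1 X, \text{hyperplane stabilizers})$ transports to finite stature of $(\pi_1 Y, \text{vertex group collection})$ at each stage of the induction. This should follow from the observation that each vertex group of $Y$ is the stabilizer of a component of the complement in $\widetilde{X}$ of a $\pi_1 X$-invariant family of hyperplanes, so any conjugate intersection of vertex groups of $Y$ is naturally expressible as a conjugate intersection of (finitely many) hyperplane stabilizers of $X$. With this translation in hand, an application of Conjecture~\ref{conj:main} to $Y$ would yield virtual specialness of $Y$, hence of $X$.

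The principal obstacle is that Conjecture~\ref{conj:main} is itself open, so this plan only reduces the final conjecture to the intermediate one. A direct attack thus requires establishing virtual specialness of a graph of virtually special cube complexes under finite stature without hyperbolicity of the vertex groups. In the hyperbolic setting of Theorem~\ref{thm:main}, the Malnormal Special Quotient Theorem and the cubulation theory for hyperbolic groups carry much of the argument; in the general setting a substitute tool is missing, and this is where the bulk of the work would live --- plausibly via a relative-hyperbolicity framework in which peripheral subgroups absorb the non-hyperbolic behavior while the ambient structure retains enough of Agol's machinery to run a combination theorem. A secondary obstacle is that the new hyperplanes introduced by each cut may interact nontrivially with the residual hyperplane collection, so persistence of finite stature through the iteration requires a careful inductive argument rather than a one-shot translation.
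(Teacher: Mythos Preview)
The statement is presented in the paper as a \emph{conjecture}, not a theorem: the paper proves only the ``only if'' direction (as a direct application of Corollary~\ref{cor:FGH}(1), itself a consequence of Theorem~\ref{thm:FGH}), and offers no argument, partial or otherwise, for the ``if'' direction. There is therefore no proof in the paper to compare your proposal against.

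Your proposal is honest about this: you explicitly note that the plan only reduces the conjecture to Conjecture~\ref{conj:main}, which is equally open. As a research outline that is reasonable, but as a proof it is of course incomplete. Two further technical points about the reduction itself. First, ``inducting on dimension'' is not the right parameter: cutting $X$ along a hyperplane orbit yields vertex spaces of the \emph{same} dimension as $X$; the decreasing quantity is the number of hyperplane orbits, and if you cut along all of them at once you no longer have a graph of spaces but a higher complex of spaces to which Conjecture~\ref{conj:main} does not apply. Second, to invoke Conjecture~\ref{conj:main} at each stage you need the vertex spaces already virtually special, which by induction would require that each vertex space has finite stature with respect to \emph{its own} hyperplane stabilizers; this is a statement about intersections of conjugates inside the vertex group, and it does not drop out formally from the ambient finite-stature hypothesis on $\pi_1 X$. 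So even modulo Conjecture~\ref{conj:main}, the reduction has genuine content still to be supplied.
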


Special cases of Conjecture~\ref{conj:main} when the vertex groups are not hyperbolic are treated in \cite{LiuVirtual,PrzytyckiWiseGraphManifolds,PrzytyckiWiseMixed,HuangComensurability}. Several outstanding open questions for lattices acting on cube complexes, as well as quasi-isometric classification and rigidity of certain cubical groups, can be reduced to Conjecture~\ref{conj:main} \cite{Haglund2006,HuangKleinerGroups}.

Though we require the vertex group to be word-hyperbolic in Theorem~\ref{thm:main}, $X$ does not have to be hyperbolic or relative hyperbolic. The key example to have in mind is where $\pi_1X$ is a lattice acting on a product of two trees, and $X$ has the structure of a graph of graphs. 
In that case, $\pi_1 X$ is reducible if and only if $\pi_1 X$ has finite stature with respect to its vertex groups. In particular, irreducible lattices acting on products of trees \cite{Wise96Thesis,BurgerMozes2000} do not have finite stature with respect to their vertex groups.

The notion of finite stature naturally arises when one examines the pathological behavior of these irreducible lattices. 
Several equivalent conditions of finite stature are discussed in Section~\ref{sec:height big-trees and depth}.

It is an open problem, going back to the origin of special cube complexes \cite{HaglundWiseSpecial}, whether
being virtually special is implied by (and thus equivalent to) having separable hyperplane subgroups. This has been a continuing theme in the topic. It was first proven in the $\mathcal{V}\mathcal{H}$ case \cite{WiseFigure8}, it was proven in the hyperbolic case in \cite{HaglundWiseAmalgams},
and in the relatively hyperbolic case in \cite{WiseIsraelHierarchy}.
There has been little progress in general without assuming some version of hyperbolicity
and we provide a version with a fairly muted hyperbolicity assumption:
\begin{cor} (cf.\ Corollary~\ref{cor:separable hyperplanes})
Let $X$ split as a graph of nonpositively curved cube complexes,
with $X$ compact and each $\pi_1X_v$  is word-hyperbolic. 
If the stabilizer of each hyperplane in $\widetilde X$ is separable
then $X$ is virtually special.
\end{cor}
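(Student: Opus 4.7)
The plan is to derive the conclusion from Theorem~\ref{thm:main}: since the remaining hypotheses are already in place, the task reduces to verifying that $\pi_1 X$ has finite stature with respect to its vertex groups, and then invoking the implication $(1)\Rightarrow(2)$.

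First I would establish that each $X_v$ is itself virtually special. The universal cover $\widetilde X_v$ embeds as a convex subcomplex of $\widetilde X$, so each hyperplane $h$ of $\widetilde X_v$ extends to a hyperplane $\widehat h$ of $\widetilde X$, with $\stab_{\pi_1 X_v}(h) = \stab_{\pi_1 X}(\widehat h) \cap \pi_1 X_v$. Restriction of separability then yields that hyperplane stabilizers of $\widetilde X_v$ are separable in $\pi_1 X_v$. Since $\pi_1 X_v$ is word-hyperbolic and acts cocompactly on the hyperbolic cube complex $\widetilde X_v$, these stabilizers are quasiconvex, and the Haglund-Wise theorem (separability of hyperplane subgroups implies virtual specialness for cubulated hyperbolic groups) gives that each $X_v$ is virtually special.

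Next I turn to finite stature. Fix a vertex group $V = \pi_1 X_v$ and consider an infinite subgroup $V \cap C$, where $C$ is an intersection of $\pi_1 X$-conjugates of vertex groups. In the Bass-Serre tree $T$ arising from the graph-of-spaces decomposition, the elements of $V \cap C$ must fix the $V$-vertex of $T$ together with a geodesic path into every vertex of $T$ stabilized by a factor of $C$; consequently $V \cap C$ lies in a $V$-conjugate of an edge group $P$ incident to $v$. Each such edge group is quasiconvex in the hyperbolic group $V$, being the fundamental group of a locally isometrically immersed compact edge space. Moreover, each edge group is the stabilizer in $\pi_1 X$ of the central hyperplane of a thickened edge space, so the hyperplane separability hypothesis implies that edge groups are separable in $\pi_1 X$, and hence in $V$.

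The last step is to bound the number of $V$-conjugacy classes of infinite subgroups $V \cap C$ lying inside the finitely many edge-group conjugacy classes. Finite height of quasiconvex subgroups in hyperbolic groups (GMRS) bounds the number of edge-group conjugates whose common intersection is infinite, and an argument using separability together with quasiconvex intersection theory in hyperbolic groups cuts down the remaining freedom to finitely many $V$-conjugacy classes, giving finite stature. Theorem~\ref{thm:main} then delivers virtual specialness of $X$. I expect the upgrade from finite height to genuine finite stature---converting the boundedly many factors into finitely many conjugacy classes of intersections---to be the main obstacle, and the separability of hyperplane (hence edge) stabilizers is precisely what is needed to close this gap.
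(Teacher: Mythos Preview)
Your final step—upgrading finite height to finite stature using only separability of edge groups inside each vertex group—is a genuine gap, and you rightly flag it as the main obstacle. In fact the paper's closing remark states explicitly that it is \emph{unclear} whether separability of edge groups alone implies finite stature when edge spaces are not superconvex in their vertex spaces. Your argument extracts edge-group separability from the hyperplane hypothesis and then discards the remaining hyperplane information, so you are effectively working from precisely the weaker hypothesis the authors say they cannot handle. Concretely, finite stature concerns $V$-conjugacy classes of intersections $V\cap\bigl(\bigcap_{e\in E}\stab(e)\bigr)$ where the $\stab(e)$ are $G$-conjugates, not $V$-conjugates, of edge groups. Lemma~\ref{lem:finitely many conjugacy class} would settle this if $G$ itself were hyperbolic, but $G$ need not be; applying such results inside the hyperbolic $V$ only controls intersections of $V$-conjugates, which is not what is required.

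The paper's route is entirely different and cubical rather than group-theoretic: it invokes Theorem~\ref{thm:FGH} on the whole of $X$, taking the $A_i$ to be the vertex spaces, which already sit in $X$ as embedded locally convex subcomplexes. Separability of \emph{all} hyperplane stabilizers (not just the edge-space hyperplanes) produces a finite cover $\widehat X$ in which every hyperplane carrier embeds. The nearest-point projection argument of Lemma~\ref{lem:intersection} then shows that each intersection of conjugates of vertex groups is realized as the stabilizer of a locally convex subcomplex of a compact piece of $\widehat X$, whence finite stature. Theorem~\ref{thm:main2} finishes. Note also that your first paragraph is unnecessary: virtual specialness of each $X_v$ is automatic from Agol's theorem once $\pi_1 X_v$ is word-hyperbolic and cubulated, without any appeal to the separability hypothesis.
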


The following is a consequence of ideas used in the proof of Theorem~\ref{thm:main}, and is of independent interest. It is related to a key technical point in \cite{HaglundWiseAmalgams}.
\begin{thm} (cf.\ Corollary~\ref{cor:connected intersection})
	Let $X$ be a compact special cube complex such that $\pi_1 X$ is word-hyperbolic. Let $\{f_i:Y_i\to X\}_{i=1}^n$ be a collection of local-isometries with compact domains. Then there exists a finite regular cover $\widehat X$ of $X$ such that for any $\{\widehat Y_j\to \widehat X\}_{j=1}^3$ where each $\widehat Y_j$ is an elevation of an element in $\{Y_i\}$, the following holds:
	\begin{enumerate}
		\item Each $\widehat Y_j$ embeds.
		\item If $\widehat Y_1\cap \widehat Y_2\neq\emptyset$, then $\widehat Y_1\cap \widehat Y_2$ is connected.
		\item If $\{\widehat Y_1,\widehat Y_2,\widehat Y_3\}$ pairwise intersect, then $\widehat Y_1\cap\widehat Y_2\cap\widehat Y_3\neq\emptyset$.
	\end{enumerate}
	Moreover, for a given collection of finite covers $\{Y'_i\to Y_i\}_{i=1}^n$, we can require that $\widehat Y_i\to Y_i$ factors through $Y'_i\to Y_i$ for each $i$.
\end{thm}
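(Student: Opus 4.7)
The plan is to build $\widehat X$ in three stages, each arranging one of the conditions (1)--(3), and then take a common finite regular cover that also factors through each $Y'_i \to Y_i$. Throughout, the local-isometry hypothesis and compactness of the $Y_i$ imply that each $\pi_1 Y_i$ is a quasi-convex subgroup of the word-hyperbolic group $\pi_1 X$, so we have access to the full Haglund--Wise theory of quasi-convex subgroups in compact special word-hyperbolic cube complexes: separability, virtual-retract covers, and canonical completions.

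For condition~(1), a canonical completion (or the virtual-retract theorem) applied to $\bigsqcup_i Y_i \to X$ yields a finite regular cover $\widehat X_1 \to X$ in which one elevation of each $Y_i$ embeds as a convex subcomplex. Because $\widehat X_1$ is regular, every elevation of $Y_i$ is isomorphic (as a cover of $Y_i$) to the embedded one, so all elevations of all $Y_i$ embed in $\widehat X_1$ and in any further finite cover. For condition~(2), fix a pair $(i,j)$ and embedded elevations $\widehat Y_a,\widehat Y_b$; after choosing lifts to $\widetilde X$, the components of $\widehat Y_a\cap\widehat Y_b$ correspond to certain $(\pi_1\widehat Y_a,\pi_1\widehat Y_b)$-double cosets in $\pi_1\widehat X_1$ of elements whose translates intersect. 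The finite-height theorem \cite{GMRS98} gives only finitely many $\pi_1 X$-conjugacy classes of nontrivial intersections $\pi_1 Y_i \cap g(\pi_1 Y_j)g^{-1}$, hence only finitely many ``essential'' intersection classes to manage. Combined with separability of double cosets of quasi-convex subgroups in compact special hyperbolic cube complex groups, we pass to a finite regular cover $\widehat X_2 \to \widehat X_1$ in which every pairwise intersection of elevations is either empty or connected.

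For condition~(3), we exploit the Helly property for convex subcomplexes in a CAT(0) cube complex: any three pairwise intersecting convex subcomplexes share a common vertex. Given pairwise intersecting elevations $\widehat Y_1,\widehat Y_2,\widehat Y_3$ in $\widehat X_2$, we aim to produce lifts $\widetilde Y_1,\widetilde Y_2,\widetilde Y_3 \subset \widetilde X$ that pairwise intersect; projecting their common Helly vertex then yields a triple intersection point in $\widehat X$. By condition~(2), fixing $\widetilde Y_1$ uniquely determines the $\pi_1\widehat Y_1$-orbit of lifts of $\widehat Y_2$ meeting $\widetilde Y_1$, and similarly for $\widehat Y_3$. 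The remaining obstruction is that representatives $\widetilde Y_2,\widetilde Y_3$ chosen from these orbits may fail to meet each other, although some $\pi_1\widehat X_2$-translates of them do. I would resolve this triple-coset coherence by one further finite regular refinement $\widehat X_3$, again using double-coset separability to rule out incoherent configurations. The final $\widehat X$ is then any finite regular cover dominating $\widehat X_3$ together with each $Y'_i \to Y_i$.

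The main obstacle is Stage~3: one must separate ``coherent'' triple-coset configurations from incoherent ones simultaneously for every triple, while preserving the connectedness of pairwise intersections from Stage~2. This is analogous to, but strictly stronger than, the key technical step in \cite{HaglundWiseAmalgams}, and it is the bounded number of essential intersection classes, supplied by finite height \cite{GMRS98}, that keeps the iterated separability arguments tractable.
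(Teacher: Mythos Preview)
Your approach is fundamentally different from the paper's, and Stages~2 and~3 contain a genuine gap that you do not close.

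The paper does not attack (1)--(3) by successive separability refinements at all. Instead it views $X$ as the single vertex space of a graph of spaces $X_a$ whose extra edge spaces are the $Y_i$ (and carriers of hyperplanes), shows that $(\pi_1 X_a,\mathcal V_a)$ has finite stature, and then applies Proposition~\ref{prop:graded}. That proposition produces a graded $C'(\tfrac{1}{24})$ cubical presentation $X^*=\langle X\mid\mathcal R\rangle$ whose relators are thickenings of finite regular covers $\dot Y_i$ of the $Y_i$, and whose universal cover $\widetilde X^*$ has \emph{well-embedded cones} (Definition~\ref{defn:well-embedded}). The desired $\widehat X$ is the cubical part of $\widetilde X^*$; each elevation $\widehat Y_j$ is, up to thickening, a cone of $\widetilde X^*$, and conditions (1)--(3) are read off directly from the well-embedded cones property together with Lemma~\ref{lem:deformation retract}. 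The ``moreover'' clause comes from the freedom in Proposition~\ref{prop:quotients}\eqref{conclusion10} to choose the relator subgroups inside prescribed finite-index subgroups.

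The gap in your argument is already at Stage~2. You assert that double-coset separability lets you pass to a cover in which every pairwise intersection of elevations is connected, but this is precisely the hard statement and does not follow from separability in the way you suggest. Concretely, connectedness of $\widehat Y_a\cap\widehat Y_b$ requires that
\[
\{h\in H:\widetilde Y_a\cap h\widetilde Y_b\neq\emptyset\}=(H\cap\stab\widetilde Y_a)\,(H\cap\stab\widetilde Y_b),
\]
and shrinking $H$ (as you must, both for Stage~3 and for the ``moreover'' clause) typically splits a single $(\stab\widetilde Y_a,\stab\widetilde Y_b)$-double coset into many $(H\cap\stab\widetilde Y_a,H\cap\stab\widetilde Y_b)$-double cosets, re-disconnecting the intersection. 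Separating the finitely many original double cosets from one another does nothing to prevent this splitting. The same instability under further covers is what makes your Stage~3 ``triple-coset coherence'' step intractable: every refinement you take to repair triples can destroy the pairwise connectedness you arranged in Stage~2. The paper sidesteps this entirely by producing $\widehat X$ in one shot from the quotient side, so that (1)--(3) hold simultaneously by the small-cancellation geometry rather than by an iterated separation argument.
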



\section{Preliminary}

\subsection{Background on Cube Complexes}\label{sub:cubical background}
\label{subsec:cube complex}
\subsubsection{Nonpositively curved cube complexes:}
An \emph{$n$-dimensional cube} is a copy of $[-\frac12,+\frac12]^n$.
Its \emph{subcubes} are the subspaces obtained by restricting some coordinates to $\pm\frac12$.
We regard a subcube as a copy of a cube in the obvious fashion.
A \emph{cube complex} $X$ is a cell complex obtained by gluing cubes together along subcubes,
where all gluing maps are modeled on isometries.
Recall that a \emph{flag complex} is a simplicial complex with the property that a finite set of vertices spans a simplex if and only if they are pairwise adjacent.
$X$ is \emph{nonpositively curved} if the link of each $0$-cube of $X$ is a flag complex.
A \emph{CAT(0) cube complex} $\widetilde X$ is a simply-connected nonpositively curved cube complex.

\subsubsection{Hyperplanes}
A \emph{midcube} is a subspace of an $n$-cube obtained by restricting one coordinate of $[-\frac12,+\frac12]^n$ to $0$.  A \emph{hyperplane} $\widetilde U$ is a  connected subspace of a CAT(0) cube complex $\widetilde X$
such that for each cube $c$ of $\widetilde X$, either  $\widetilde U\cap c =\emptyset$ or $\widetilde U\cap c$ consists of a midcube of $c$. The \emph{carrier} of a hyperplane $U$ is the subcomplex $N(\widetilde U)$ consisting of all closed cubes intersecting $U$.
We note that every midcube of $\widetilde X$ lies in a unique hyperplane, and $N(\widetilde U)\cong \widetilde U\times c^1$ where $c^1$ is a 1-cube. An \emph{immersed hyperplane} $U\rightarrow X$ in a nonpositively curved cube complex is a map $\stab(\widetilde U)\backslash \widetilde U \rightarrow X$ where $\widetilde U$ is a hyperplane of the universal cover $\widetilde X$ of $X$. We similarly define $N(U)\rightarrow X$ via $N(U)=\stab(\widetilde U)\backslash N(\widetilde U)$.

\newcommand{\link}{\text{link}}

A map $\phi:Y\rightarrow X$ between nonpositively curved cube complexes is \emph{combinatorial} if it maps open $n$-cubes homeomorphically to open $n$-cubes. A combinatorial map is a \emph{local-isometry}
if for each $0$-cube $y$,
the induced map $\link(y)\rightarrow \link(\phi(y))$ is an embedding of simplicial complexes,
such that $\link(y)\subset \link(\phi(y))$ is \emph{full} in the sense that if a collection of vertices
of $\link(y)$ span a simplex in $\link(\phi(y))$ then they span a simplex in $\link(y)$.

\subsubsection{Special Cube Complexes}
A nonpositively curved cube complex $X$ is \emph{special}
if each immersed hyperplane $U\rightarrow X$  is an embedding, and moreover $N(U)\cong U\times [-\frac12,+\frac12]$,
each restriction $U\times \{\pm\frac12\} \rightarrow X$ is an embedding,
and if $U,V$ are hyperplanes of $X$ that intersect then a 0-cube of $N(U)\cap N(V)$ lies in a 2-cube
intersected by both $U$ and $V$.

\subsection{Cubical small cancellation}
A \emph{cubical presentation} $\langle X | \{Y_i\} \rangle$ consists of a nonpositively curved cube complex $X$, and a set of local-isometries $Y_i \rightarrow X$ of nonpositively curved cube complexes. We use the notation $X^*$ for the cubical presentation above,
and each  $Y_i$ is called a \emph{cone} of $X^*$. As a topological space, $X^*$ consists of $X$ with a (genuine) cone on each $Y_i$ attached,
so $\pi_1X^*=\pi_1X/\nclose{\{\pi_1Y_i\}}$. We use the notation $\widetilde X^*$ for the universal cover of $X^*$. The complex $X$ is  the \emph{cubical part} of $X^*$.

\begin{definition}
	\label{def:morphism}
Let $Y_i\rightarrow X$ and $Y_j\rightarrow X$ be maps.
A \emph{morphism} $Y_i\rightarrow Y_j$ is a map such that $Y_i\rightarrow X$ factors as $Y_i\rightarrow Y_j\rightarrow X$. It is an \emph{isomorphism} if their is an inverse map $Y_j\rightarrow Y_i$ that is also a morphism.
We define an \emph{automorphism} accordingly and let $\Aut{Y\rightarrow X}$ denote the group of automorphisms of
$Y\rightarrow X$.
\end{definition}


A \emph{cone-piece} of $X^*$ in $Y_i$ is 
an intersection $g\widetilde Y_j \cap \widetilde Y_i$ for some $g \in \pi_1X$,
but where $g\widetilde Y_i$ is not a subcomplex of $\widetilde Y_i$ with the inclusion
 $g\widetilde Y_j\hookrightarrow \widetilde Y_i$ descending to a morphism $Y_j\rightarrow Y_i$, and $\widetilde Y_i$ is not a subcomplex of $g\widetilde Y_i$ with the inclusion
 $ \widetilde Y_i\hookrightarrow g\widetilde Y_j$ descending to a morphism $Y_i\rightarrow Y_j$.  A \emph{wall-piece} of $X^*$ in $Y_i$ is $\widetilde{Y}_i \cap N(\widetilde{U})$, where $N(\widetilde U)$ is the carrier of a hyperplane $\widetilde{U}$ 
 with $\widetilde U\cap \widetilde{Y}_i = \emptyset$. A \emph{piece} is either a cone-piece or a wall-piece.
 
\begin{remark}
The cone-piece and wall-piece defined here are called ``contiguous abstract cone piece'' and ``contiguous abstract wall-piece'' in \cite{WiseIsraelHierarchy}. There are several other type of pieces discussed in \cite{WiseIsraelHierarchy}, however, in the light of \cite[Lem~3.7]{WiseIsraelHierarchy}, these two kinds of pieces are all what we need for defining cubical small cancellation.
\end{remark}

\begin{definition}
	Let $X^*=\langle X \mid \{Y_i\}\rangle$ and $A^*=\langle A \mid \{B_j\}\rangle$ be cubical presentations.
	A \emph{map} $A^*\rightarrow X^*$ of cubical presentations is a local-isometry $A\rightarrow X$,
	so that for each $j$ there exists $i$ and  a map $B_j\rightarrow Y_i$
so that the composition $B_j\rightarrow A\rightarrow X$ equals $B_j \rightarrow Y_i\rightarrow X$.
	
	Given a cubical presentation $X^*$ and a local-isometry $A\rightarrow X$,
	 the \emph{induced presentation} is the cubical presentation of the form
	$A^* = \langle A \mid \{A\otimes_X Y_i\} \rangle$ where $A\otimes_X Y_i$ is the fiber-product of $A\rightarrow X$ and $Y_i\rightarrow X$ as in Definition~\ref{def:fiber-product}.
	Note that there is a map of cubical presentations $A^*\rightarrow X^*$.
\end{definition}

\begin{definition}[Graded Presentations and Subpresentations]\label{defn:graded pres and subpres}
A \emph{graded cubical presentation} $\langle X \mid \{Y_i\}\rangle$ is equipped with a
\emph{grading} of its cones, which is a map from the set of cones to $\naturals$.
We sometimes use the notation $\langle X \mid \{Y_{ij}\} \rangle$
to indicate that $\grade(Y_{ij})=i$.

For a cone $Y$ of $X^*$, the \emph{subpresentation} \index{$Y^\subpres$} $Y^\subpres$
is the cubical presentation induced by
 $Y\rightarrow X$ and $X_{\grade(Y)-1}^*$
where  $X_{\grade(Y)-1}^*$ is the subpresentation of $X^*$ that includes all cones whose
grade is less than $\grade(Y)$.
In many of our applications, $Y^\subpres$ is the cubical presentation whose base is $Y$
 and whose cones are either contractible, or consist of lower grade cones of $X^*$ that properly factor through $Y$.
\end{definition}


Let  $\systole{Y^\subpres}$ denote the infimal length of a closed path in $Y$
that is essential in $Y^\subpres$.

Let $\widetilde Y$ and $\widetilde Y^\subpres$ be the universal covers of $Y$ and $Y^\subpres$. There is a covering map $\xi$ from $\widetilde Y$ to the cubical part of $\widetilde Y^\subpres$. For a subcomplex $P$ of $\widetilde Y$, let $\noobla{P}{Y^\subpres}$  be the diameter of $\xi(P)$ in the cubical part of $\widetilde Y^\subpres$. For a map $P\rightarrow Y$ with trivial $\pi_1 Y^\subpres$ image, let  $\noobla{P}{Y^\subpres}$ be the diameter
of a lift of $P$ to $\widetilde Y^\subpres$.

We say $X^*$ satisfies the $C'(\frac{1}{24})$ \emph{small cancellation} condition if
$\noobla{P}{Y_i^\subpres} < \frac{1}{24} \systole{Y_i^\subpres}$
for every cone-piece or wall-piece $P$ of  $Y_i$.

The following is a slightly more restrictive version of the same notion treated in \cite[Def~3.61 and Def~3.65]{WiseIsraelHierarchy}:
\begin{definition}
	Let $A^*=\langle A\mid \{B_j\}\rangle$ and $X^*=\langle X\mid\{Y_i\}\rangle$.
	We say $A^*\rightarrow X^*$ has \emph{liftable shells} provided the following holds:
	Whenever $QS\rightarrow Y_i$ is an essential closed path with $|Q| > |S|_{Y^\subpres_i}$ and $Q\to Y_i$ factors through $Q\to A \otimes_X Y_i\to Y_i$,
	there exists $B_j$ and a lift $QS\rightarrow B_j$, such that $B_j\rightarrow A\rightarrow X$
	factors as $B_j\rightarrow Y_i\rightarrow X$.
\end{definition}
The following is a restatement of a combination of \cite[Thm~3.68 and Cor~3.72]{WiseIsraelHierarchy}:
\begin{lem}\label{lem:quasi-isom embedding}
Let  $X^*$ be $C'(\frac{1}{24})$.
Let $A^*\rightarrow X^*$ have liftable shells and suppose that $A^*$ is compact.
Then $\pi_1 A^*\to \pi_1 X^*$ is injective, and $A^*\rightarrow X^*$ lifts to an embedding $\widetilde A^*\rightarrow \widetilde X^*$ that is also a quasi-isometric embedding.
\end{lem}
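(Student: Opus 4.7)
The plan is to argue via a minimal disk diagram analysis in the cubical presentation $X^*$, leveraging the $C'(\tfrac{1}{24})$ condition to force a Greendlinger-type shell, and then using the liftable shells hypothesis to push that shell inside $A^*$. For injectivity, I would assume that an essential closed path $\gamma \to A^*$ becomes null-homotopic in $X^*$ and take a disk diagram $D \to X^*$ with $\partial D = \gamma$ that is minimal with respect to the lexicographic complexity (number of cone-cells, then number of squares). The goal is to show $D$ can be transferred into a diagram $D' \to A^*$, contradicting essentiality.

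The structural input is a cubical Greendlinger lemma, a version of which is \cite[Thm~3.68]{WiseIsraelHierarchy}: any minimal disk diagram $D$ with at least one cone-cell contains a \emph{shell}, an outermost cone-cell $C$ factoring through some $Y_i \to X$ whose boundary decomposes as $QS$ with $Q \subset \partial D$ and $|Q| > \noobla{S}{Y_i^\subpres}$. The $\tfrac{1}{24}$ constant enters through a standard counting over the corners where cone-pieces and wall-pieces contribute to $\partial D$; the slack provided by small cancellation forces such a $Q$ to exist. If $D$ has no cone-cells then $D \to X^*$ is a square diagram in $X$, so it lifts to $\widetilde A$ via the local-isometry $A \hookrightarrow X$, and $\gamma$ is already null-homotopic in $A$.

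Given a shell $C$ with arc $Q \subset \partial D$, the subpath $Q$ factors as $Q \to A \otimes_X Y_i \to Y_i$, so by liftable shells there is a cone $B_j$ of $A^*$ and a lift $QS \to B_j$ compatible with $B_j \to Y_i \to X$. Excising $C$ and regluing along a cone-cell modelled on $B_j$ produces a modified diagram with $Q \subset \partial D$ replaced by $S$; the new diagram lands in $A^*$ on that portion while decreasing cone-cell count. Iterating, we reach either a contradiction to minimality or a square diagram in $A$, in either case forcing $\gamma$ to be null-homotopic in $A^*$. The same surgery, applied to a minimal bigon between two preimages of a single point in $\widetilde X^*$ under a candidate self-intersection of $\widetilde A^* \to \widetilde X^*$, yields the embedding statement. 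For the quasi-isometric embedding conclusion (\cite[Cor~3.72]{WiseIsraelHierarchy}), I would compare a geodesic in $\widetilde X^*$ between two points of $\widetilde A^*$ with its projected counterpart in $\widetilde A^*$; each shell that has to be removed shortens the boundary by a definite fraction, so iterating produces a linear bound of $\widetilde A^*$-distance by $\widetilde X^*$-distance.

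The principal obstacle is bookkeeping: one must verify that the shell replacement does not create new cone-pieces or wall-pieces violating $C'(\tfrac{1}{24})$, and that the notion of minimality (and of ``outermost'' cone-cell) survives the surgery. A related subtlety is that the Greendlinger lemma strictly applies after $D$ has been put in a normal form in which square-cells adjacent to a cone-cell have been absorbed into the carrier of that cone-cell, so a preliminary squaring/hexagonal-move reduction is needed before the shell analysis. Both points are handled in \cite{WiseIsraelHierarchy}, and the lemma as stated is a convenient packaging of \cite[Thm~3.68 and Cor~3.72]{WiseIsraelHierarchy} with liftable shells as the external hypothesis that feeds the inductive step.
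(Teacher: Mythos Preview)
The paper does not prove this lemma at all: it is stated as ``a restatement of a combination of \cite[Thm~3.68 and Cor~3.72]{WiseIsraelHierarchy}'' and no argument is given. Your proposal is therefore not so much an alternative to the paper's proof as a sketch of the argument in the cited reference, and you explicitly acknowledge this at the end.

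As a sketch of that argument your outline is broadly on target, but the description of the surgery step is slightly garbled. You write that excising the shell $C$ and regluing along $B_j$ ``produces a modified diagram with $Q\subset\partial D$ replaced by $S$; the new diagram lands in $A^*$ on that portion.'' The cleaner way to organize this is: the lift $QS\to B_j$ shows that in $A^*$ the subpath $Q$ of $\gamma$ is homotopic rel endpoints to $S^{-1}$ (via the cone on $B_j$), so $\gamma$ is homotopic \emph{in $A^*$} to a path $\gamma'\to A$ obtained by replacing $Q$ with $S^{-1}$; meanwhile $\gamma'$ bounds $D\smallsetminus C$ in $X^*$, a diagram of strictly smaller complexity. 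Induction then reduces to the square case, where $\pi_1$-injectivity of the local isometry $A\to X$ finishes. Your version conflates the homotopy in $A^*$ with the reduction of $D$ in $X^*$, which makes it unclear why the new boundary path still lies in $A$. The embedding and quasi-isometric embedding parts are fine as high-level pointers to the cited results.
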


The following is proven in \cite[Lem~3.67]{WiseIsraelHierarchy}:
\begin{lem}\label{lem:liftable shell criterion}
	Let $\langle X \mid \{Y_i\}\rangle$ be a $C'(\frac{1}{24})$ small-cancellation cubical presentation.
	Let $A\rightarrow X$ be a local-isometry and let $A^*$ be the associated induced presentation.
	Suppose that for each $i$ and each component $K$ of $A\otimes_X Y_i$,  either $K$ maps isomorphically to $Y_i$ or
	$\diameter(K)\leq \frac12\systole{Y_i^\subpres}$ and $\pi_1K^*=1$ where
	$K^*$ is induced by $K\rightarrow Y_i^\subpres$. Then the natural map $A^*\rightarrow X^*$ has liftable shells.
\end{lem}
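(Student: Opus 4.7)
The natural strategy is to verify the liftable shells property directly by exhibiting the required cone. Given an essential closed path $QS \to Y_i$ with $|Q| > |S|_{Y_i^\subpres}$ such that $Q \to Y_i$ factors through $A\otimes_X Y_i$, connectedness of $Q$ forces its image to lie in a single component $K$ of $A\otimes_X Y_i$. This $K$ is a cone of $A^*$, and the fiber-product structure automatically yields the factorization $K\to A\to X = K\to Y_i\to X$ demanded of $B_j$ in the definition. Thus it suffices to promote the partial lift $Q\to K$ to a full lift $QS\to K$.

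By hypothesis there are two cases for $K$. In Case (a), where $K\to Y_i$ is an isomorphism, composing with the inverse immediately yields the lift. The substance of the lemma lies in Case (b), where $\diameter(K)\leq \frac{1}{2}\systole{Y_i^\subpres}$ and $\pi_1 K^*=1$; my plan is to argue by contradiction that no shell $QS$ satisfying the hypotheses can have $Q$ factoring through such a small, simply-connected-after-coning component. Concretely, because $\pi_1 K^*=1$, the composition $K\to Y_i^\subpres$ lifts to a map into $\widetilde{Y_i^\subpres}$ whose image has diameter at most $\diameter(K) \le \frac{1}{2}\systole{Y_i^\subpres}$. Hence a lift of the path $Q$ into the cubical part of $\widetilde{Y_i^\subpres}$ has endpoints separated by at most $\frac{1}{2}\systole{Y_i^\subpres}$, while a lift of $S$ has endpoints separated by at most $|S|_{Y_i^\subpres}$. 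Since $QS$ is essential in $Y_i^\subpres$, its lift does not close up, and connecting the endpoints by a geodesic yields an essential closed path in $Y_i^\subpres$ of length at most $\frac{1}{2}\systole{Y_i^\subpres}+|S|_{Y_i^\subpres}$. If one can replace $Q$ by a geodesic $Q'$ in $K$ (valid since $\pi_1 K^* = 1$) and likewise trade $|S|$ for $|S|_{Y_i^\subpres}$ using control on how $S$ traverses pieces of $Y_i^\subpres$, the resulting closed path $Q'S$ is essential in $Y_i^\subpres$ but has length strictly less than $\systole{Y_i^\subpres}$, a contradiction.

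The main obstacle will be executing this length accounting in Case (b) rigorously. The difficulty is that $|Q|$ (a combinatorial length) and $|Q|_{Y_i^\subpres}$ (a diameter in $\widetilde{Y_i^\subpres}$) are genuinely different quantities, as are $|S|$ and $|S|_{Y_i^\subpres}$, and only the latter is directly bounded by the shell hypothesis. Bridging this gap requires invoking the $C'(\tfrac{1}{24})$ small-cancellation condition on the cone-pieces and wall-pieces of $Y_i^\subpres$ encountered by $S$, together with the geodesic-replacement for $Q$ inside the simply-connected $K^*$. Once the length comparisons are under control, the final contradiction with the definition of $\systole{Y_i^\subpres}$ follows and rules out Case (b), leaving only the trivial Case (a) where the lift exists by the isomorphism.
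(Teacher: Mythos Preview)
The paper does not supply its own proof of this lemma; it is quoted as \cite[Lem~3.67]{WiseIsraelHierarchy}, so there is no in-paper argument to compare against. That said, your structural approach matches the standard one: take $B_j$ to be the component $K$ of $A\otimes_X Y_i$ carrying the image of $Q$, observe that $K\to A\to X$ equals $K\to Y_i\to X$ by the fiber-product construction, handle Case~(a) by the isomorphism, and argue that Case~(b) cannot occur.

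The heart of Case~(b) is cleaner than you present it and does not require piece-counting on $S$ via $C'(\tfrac{1}{24})$. Since $\pi_1 K^*=1$, the map $K\to Y_i^\subpres$ lifts to the cubical part of $\widetilde{Y_i^\subpres}$ (this is exactly the paper's remark ``Note that $K$ lifts to $\widetilde Y_i^\subpres$'' following the lemma). Lift the closed path $QS$ to $\widetilde{Y_i^\subpres}$; the $Q$-portion lands in a lift of $K$, so its endpoints lie at distance at most $\diameter(K)\le\tfrac12\systole{Y_i^\subpres}$, while the $S$-portion has endpoints at distance at most $|S|_{Y_i^\subpres}$. Because $QS$ is essential, a geodesic between the two ends of the full lift projects to an essential closed path, so those ends are at distance at least $\systole{Y_i^\subpres}$. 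The triangle inequality then forces $|S|_{Y_i^\subpres}\ge\tfrac12\systole{Y_i^\subpres}$.

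The gap you flag is genuine, but your proposed fix is pointed in the wrong direction. To contradict $|Q|>|S|_{Y_i^\subpres}$ one now needs $|Q|\le\tfrac12\systole{Y_i^\subpres}$, yet a path can wander arbitrarily long inside a small $K$, so $\diameter(K)$ does not bound $|Q|$ for an \emph{arbitrary} path. The resolution is not piece-control on $S$ but rather on $Q$: in Wise's setting the shells one is obliged to lift arise as cells of reduced disk diagrams, so the outerpath $Q$ may be taken to be geodesic. Since $K\to Y_i$ is a local isometry and $K$ lifts to the CAT(0) cubical part of $\widetilde{Y_i^\subpres}$, a geodesic $Q$ in $K$ has $|Q|\le\diameter(K)\le\tfrac12\systole{Y_i^\subpres}$, which closes the argument. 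This implicit reduction of $Q$ to a geodesic is where the disk-diagram machinery of the source enters, and it is the missing ingredient in your sketch rather than the $C'(\tfrac{1}{24})$ hypothesis acting on $S$.
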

Note that $K$ lifts to $\widetilde Y_i^\subpres$.
A natural scenario is when each component $K$ of $A\otimes_X
Y_i$
is either a copy of $Y_i$ or satisfies $\diameter(K)\leq \frac12\systole{Y_i^\subpres}$ with
$K$ either a contractible cube complex or a copy of a cone $Y_j$ with $\grade(Y_j)< \grade(Y_i)$.



\subsection{Helly property for cones}
\begin{definition}\label{defn:well-embedded}
	A cubical presentation $X^*$ has \emph{well-embedded cones} if the following conditions hold:
	\begin{enumerate}
		\item\label{embed:1} Let $Y_1$ be a cone of $X^*$. Then $Y_1\rightarrow X$ is injective.
		\item\label{embed:2} Let $Y_1,Y_2$ be cones in $X^*$. Then $Y_1\cap Y_2$ is connected.
		\item\label{embed:3} Let $Y_1,Y_2,Y_3$ be cones in $X^*$.
		If $Y_i\cap Y_j\neq \emptyset$ for each $i,j$ then $Y_1\cap Y_2\cap Y_3\neq \emptyset$.
	\end{enumerate}
\end{definition}

\begin{example}
	Let $X$ be a graph that is a 3-cycle with edges $a,b,c$ so that $abc$ is a path in $X$.
	Properties~\eqref{embed:1}~\eqref{embed:2},~and~\ref{embed:3}
	fail for the following cubical presentations which satisfy $C'(\alpha)$ for each $\alpha>0$:
$$\langle X \mid abc\rangle \hspace{1cm} \langle X \mid ab, c\rangle
\hspace{1cm} \langle X \mid a, b ,c \rangle$$
	Moreover, let $Z^* = \langle X \mid X, a, b,c \rangle$. Then $Z^*$ is $C'(\alpha)$
	but $\widetilde Z^* = Z^*$ fails to have well-embedded cones. Note that we then have
	$X^\subpres= \langle X \mid a, b ,c \rangle$.
\end{example}

A graded cubical presentation  $X^*$ has \emph{small subcones} if for each cone $Y$ of $X^*$,
and each cone $Z$ of $Y^\subpres$, we have $\diameter(Z)<\frac13\systole{Y^\subpres}$.

The following is proven in \cite[Lem~3.58]{WiseIsraelHierarchy}:
\begin{lem}[Well-embedded Cones]\label{lem:well-embedded cones}
	Let $X^*$ be a $C'(\frac{1}{12})$ graded metric cubical presentation with finitely many grades
	and with small subcones.
	Then $\widetilde X^*$ has well-embedded cones.
\end{lem}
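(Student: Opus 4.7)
The plan is to establish the three well-embeddedness conditions simultaneously by induction on the grade of the cones, using the cubical Greendlinger-type analysis of disk diagrams that is available under $C'(\frac{1}{12})$. At each inductive stage, the small-subcones hypothesis is what allows lower-grade cones appearing inside $Y_i^\subpres$ to be absorbed as ``small pieces'' relative to $\systole{Y_i^\subpres}$: since $\diameter(Z)<\tfrac13\systole{Y^\subpres}$ for any subcone $Z$ of $Y^\subpres$, inductively embedded lifts of such $Z$ contribute controllably to any disk diagram in $Y^\subpres$.

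For condition~\eqref{embed:1}, suppose $\widetilde Y_i$ fails to embed in the cubical part of $\widetilde X^*$. Then two lifts of some point of $Y_i$ are connected by a nontrivial path $\gamma\subset \widetilde Y_i$ whose image is a closed path in the cubical part of $\widetilde X^*$. I would take a minimal-complexity disk diagram $D\to X^*$ filling $\gamma$. The cubical Greendlinger lemma (a version of \cite[Thm.~5.44 / Cor.~5.47]{WiseIsraelHierarchy}) furnishes a shell: a cone-cell $C\to Y_j$ whose outer path $Q\subset\partial D\subset Y_i$ satisfies $|Q|>|S|_{Y_j^\subpres}$ for its inner path $S$. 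If $Y_j\to X$ factors through $Y_i\to X$, the shell can be absorbed, strictly reducing $D$ and contradicting minimality. Otherwise, the intersection $g\widetilde Y_j\cap \widetilde Y_i$ contains all of $Q$; but by definition of piece this forces a single cone-piece of diameter $\geq |Q|>\tfrac{1}{12}\systole{Y_j^\subpres}$, violating $C'(\tfrac{1}{12})$. Either way we reach a contradiction.

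For condition~\eqref{embed:2}, suppose $\widetilde Y_1\cap \widetilde Y_2$ has two components $A$ and $B$. Join them by a $\widetilde Y_1$-geodesic $\alpha$ and a $\widetilde Y_2$-geodesic $\beta$ to form a closed path, and take a minimal disk diagram $D$ for it. A Greendlinger shell in $D$ has outer path on $\alpha\cup\beta$, hence inside $\widetilde Y_1$ or $\widetilde Y_2$. As above, absorbing or small-cancellation arguments force the shell-cone to coincide with $Y_1$ or $Y_2$, and pushing across it reduces the combined length of $\alpha,\beta$ while keeping the endpoints in $A,B$, contradicting minimality unless $A=B$. Condition~\eqref{embed:3} is similar but with a triangle: pick $a\in \widetilde Y_2\cap\widetilde Y_3$, $b\in \widetilde Y_1\cap\widetilde Y_3$, $c\in\widetilde Y_1\cap\widetilde Y_2$, join them by geodesics in the corresponding two-cone intersections, and fill with a minimal triangular disk diagram. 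A shell absorbed into the appropriate $Y_k$ lets us slide one of $a,b,c$ into the third cone, yielding a point common to all three.

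The main obstacle I anticipate is keeping the bookkeeping honest across grades: when the Greendlinger shell is a cone $Y_j$ of grade strictly less than $\grade(Y_i)$ and it contributes a piece of $Y_i^\subpres$, I have to invoke the inductive well-embeddedness of $\widetilde Y_j$ (which needs its own small-cancellation data) to identify the relevant piece and apply the $C'(\tfrac{1}{12})$ bound correctly. This is exactly where the small-subcones condition is used: it guarantees that such $Y_j$ has diameter less than $\tfrac13\systole{Y_i^\subpres}$, so the disk diagram reductions stay inside the regime where $C'(\tfrac{1}{12})$ is meaningful, and the inductive application of embedding, connected intersection, and Helly for triples on lower-grade cones remains valid.
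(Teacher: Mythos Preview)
The paper does not prove this lemma; it is quoted verbatim as \cite[Lem~3.58]{WiseIsraelHierarchy} and used as a black box. So there is no in-paper proof to compare against, and your sketch is an attempt to reconstruct the argument from the cited source.

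Your overall strategy---induction on grade, Greendlinger shells in minimal disk diagrams, and absorption/piece-contradiction dichotomies---is the right shape for this kind of result and is indeed how the proof in \cite{WiseIsraelHierarchy} proceeds. A few places deserve more care, though. In your argument for \eqref{embed:1}, when the shell cone $Y_j$ does not factor through $Y_i$, you need $Q$ to violate the piece bound in $Y_j$, not in $Y_i$: the relevant inequality is that the outer path $Q$ has $|Q|_{Y_j^\subpres}$ larger than (roughly) half of $\systole{Y_j^\subpres}$, which contradicts $C'(\tfrac{1}{12})$ applied to the cone-piece of $Y_j$ coming from $\widetilde Y_i$. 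Your wording conflates $Y_i$ and $Y_j$ here. In \eqref{embed:2} and \eqref{embed:3}, the reduction step (``pushing across the shell reduces length'') needs the replacement path to remain in the correct $\widetilde Y_k$; this is where you actually use that the shell cone is a morphism factoring through one of the $Y_k$, not merely that it overlaps. Finally, your invocation of the small-subcones hypothesis is correct in spirit, but note that the constant $\tfrac13$ is what makes the graded ladder-reduction work (so that a subcone together with its $\kappa$-thickening stays under $\tfrac12\systole{Y^\subpres}$); you should make explicit where this $\tfrac13$ versus $\tfrac12$ margin is consumed.
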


\subsection{Superconvexity and fiber-products}
The following are quoted from \cite[Def~2.35 \& Lem~2.36]{WiseIsraelHierarchy}:
\newcommand{\neb}{\ensuremath{\mathcal N}}
\begin{definition}
	Let $X$ be a metric space.
	A subset $Y\subset X$ is \emph{superconvex}
	if it is convex and for any bi-infinite geodesic $\gamma$, if $\gamma$ is contained in the $r$-neighborhood $\neb_r(Y)$ for some $r>0$,
	then $\gamma\subset Y$.
	A map $Y\rightarrow X$ is \emph{superconvex}
	if the map $\widetilde Y \rightarrow \widetilde X$ is an embedding onto a superconvex subspace.
\end{definition}

\begin{lem}\label{lem:superconvex core}
	Let $H$ be a quasiconvex subgroup of a word-hyperbolic group $G$.
	And suppose that $G$ acts properly and cocompactly on
	a CAT(0) cube complex $X$.
	For each compact subcomplex $D\subset X$
	there exists a superconvex $H$-cocompact subcomplex $K\subset X$
	such that $D\subset K$.
\end{lem}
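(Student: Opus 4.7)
The strategy is to first build an $H$-cocompact convex subcomplex $Y\supset D$, and then enlarge $Y$ by a uniform combinatorial neighborhood so as to absorb every bi-infinite geodesic that runs parallel to it. Hyperbolicity of $G$ is the sole source of uniformity.

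\emph{Step 1: an initial $H$-cocompact convex subcomplex.} Since $G$ acts properly and cocompactly on $X$, the orbit map $G\to X$ is a quasi-isometry; quasiconvexity of $H\le G$ thus translates to quasiconvexity of $H\cdot x_0$ in $X$ for any $0$-cube $x_0$. Since $D$ is compact, the set $H\cdot D$ is also quasiconvex. Let $Y$ be the combinatorial convex hull of $H\cdot D$ in $X$. Because $X$ is $\delta$-hyperbolic, the convex hull of a quasiconvex subset lies within bounded Hausdorff distance of that subset, so $Y$ is $H$-invariant and $H$-cocompact, and $D\subset Y$.

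\emph{Step 2: uniform absorption of parallel bi-infinite geodesics.} I claim there is a constant $R_0$, depending only on the quasiconvexity constant of $Y$ and on $\delta$, with the following property: every bi-infinite geodesic of $X$ contained in some neighborhood $\neb_r(Y)$ (for \emph{any} $r>0$) is already contained in $\neb_{R_0}(Y)$. Indeed, if $\gamma\subset\neb_r(Y)$, then by hyperbolicity the two ideal endpoints $\gamma(\pm\infty)$ lie in the limit set $\partial_\infty Y$, which coincides with the limit set of $H$. A standard thin-triangles argument then shows that any bi-infinite geodesic whose endpoints lie in the limit set of a quasiconvex set sits in a uniform neighborhood of that set, independent of the initial $r$.

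\emph{Step 3: definition of $K$.} Choose $R>R_0$ and let $K$ be the combinatorial convex hull of $\neb_R(Y)$ in $X$. The neighborhood $\neb_R(Y)$ is quasiconvex (its quasiconvexity constant depending only on $R$ and $\delta$), so invoking hyperbolicity one more time, $K$ lies at bounded Hausdorff distance from $\neb_R(Y)$; hence $K$ is an $H$-invariant, $H$-cocompact convex subcomplex containing $D$. Superconvexity is now immediate: if $\gamma$ is a bi-infinite geodesic with $\gamma\subset\neb_r(K)$, then $\gamma\subset\neb_{r+C}(Y)$ for a constant $C$ controlling the Hausdorff distance of $K$ from $Y$, and Step~2 gives $\gamma\subset\neb_{R_0}(Y)\subset\neb_R(Y)\subset K$.

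\emph{Main obstacle.} The substantive point is Step~2, namely producing a single constant $R_0$ that works simultaneously for \emph{all} neighborhood radii $r$; this is where the word-hyperbolicity hypothesis is essential and cannot be replaced by bare quasiconvexity. The remaining work is bookkeeping: verifying that combinatorial neighborhoods and convex hulls of quasiconvex $H$-cocompact subcomplexes in a hyperbolic CAT(0) cube complex are again $H$-cocompact, which follows from the general principle that in a hyperbolic space the convex hull of a quasiconvex set is at bounded distance from it.
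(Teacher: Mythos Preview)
The paper does not supply its own proof of this lemma: it is quoted verbatim from \cite[Lem~2.36]{WiseIsraelHierarchy}, so there is no in-text argument to compare against. Your proposal is essentially the standard proof one finds in that source (take an $H$-cocompact convex core, then thicken by a uniform amount governed by hyperbolicity), and the overall strategy is sound.

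A few points deserve tightening. In Step~1 you invoke that the cubical convex hull of a quasiconvex subset of a hyperbolic CAT(0) cube complex lies within bounded Hausdorff distance of the subset; this is correct but uses more than bare $\delta$-hyperbolicity---one also needs local finiteness (or finite dimension) of $X$, which follows here from the proper cocompact $G$-action. In Step~3 you should be explicit that $K$, being a convex \emph{subcomplex}, is CAT(0)-convex (this is a theorem of Haglund), so that a bi-infinite CAT(0) geodesic contained in $\neb_R(Y)\subset K$ genuinely lies in $K$ and not merely in some combinatorial thickening of it. Finally, in Step~2 your ``standard thin-triangles argument'' is indeed standard, but the precise statement you need is: in a $\delta$-hyperbolic space, any bi-infinite geodesic both of whose ideal endpoints lie in the limit set of a $\kappa$-quasiconvex set lies in the $(\kappa+C\delta)$-neighborhood of that set for a universal $C$. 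With these clarifications your argument is complete and matches the intended proof.
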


%
%

The following is a consequence of \cite[Lem~2.39]{WiseIsraelHierarchy}:
\begin{lem}
	\label{lem:bound on wall pieces}
Let $Y\rightarrow X$ be compact and superconvex.
Then there exists $r$ bounding the diameter of every wall-piece in $\langle X\mid Y\rangle$.
 \end{lem}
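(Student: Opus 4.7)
The plan is to argue by contradiction using a combinatorial Arzelà--Ascoli argument, with superconvexity of $\widetilde Y$ supplying the final obstruction. Suppose wall-piece diameters are unbounded, so for each $n$ there is a hyperplane $\widetilde U_n$ with $\widetilde U_n\cap \widetilde Y=\emptyset$ whose wall-piece $P_n=\widetilde Y\cap N(\widetilde U_n)$ satisfies $\diam(P_n)>n$. Each $P_n$ is convex as an intersection of convex subcomplexes of $\widetilde X$, so I pick a combinatorial geodesic segment $\sigma_n\subset P_n$ of length $>n$.

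The first step is to reposition inside a fundamental domain. Since $Y$ is compact, $\pi_1 Y$ acts cocompactly on $\widetilde Y$, and I translate by $h_n\in \pi_1 Y$ so that the midpoint of $h_n\sigma_n$ lies in a fixed compact fundamental domain $D\subset\widetilde Y$. Writing $\widetilde U'_n=h_n\widetilde U_n$, the translated segment lies in $\widetilde Y\cap N(\widetilde U'_n)$, and since it is within $\tfrac12$ of $\widetilde U'_n$ while meeting $D$, the hyperplane $\widetilde U'_n$ lies in the $1$-neighborhood of $D$. Local finiteness of $\widetilde X$ then leaves only finitely many hyperplanes in this neighborhood, so I extract a subsequence along which $\widetilde U'_n$ is a single hyperplane $\widetilde U$, still disjoint from $\widetilde Y$ since $h_n$ stabilizes $\widetilde Y$.

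Next comes the limit: Arzelà--Ascoli applied to the combinatorial geodesics $h_n\sigma_n\subset \widetilde Y\cap N(\widetilde U)$, with midpoints in $D$ and length tending to infinity, produces a bi-infinite combinatorial geodesic $\gamma\subset \widetilde Y\cap N(\widetilde U)$. Because $\widetilde Y$ is convex and disjoint from $\widetilde U$, it contains no cube dual to $\widetilde U$, so $\gamma$ lies in a single boundary copy $\widetilde U\times\{\pm\tfrac12\}$ under the product decomposition $N(\widetilde U)\cong \widetilde U\times[-\tfrac12,\tfrac12]$. Projecting through this product to $\widetilde U\times\{0\}\cong \widetilde U$ yields a parallel bi-infinite geodesic $\gamma'\subset\widetilde U$ at distance $\tfrac12$ from $\gamma\subset \widetilde Y$. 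Then $\gamma'\subset \neb_{1/2}(\widetilde Y)$ even though $\gamma'\subset\widetilde U$ is disjoint from $\widetilde Y$, contradicting superconvexity of $\widetilde Y$.

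The step I expect to be the main obstacle is the pigeonhole that collapses the sequence $\{\widetilde U'_n\}$ to a single hyperplane. Without it, the Arzelà--Ascoli limit would only furnish a bi-infinite geodesic in $\widetilde Y$ with no retained control on a nearby disjoint hyperplane, and the parallel-geodesic trick that exploits superconvexity would have no target. Compactness of $D$ together with local finiteness of $\widetilde X$ is precisely the leverage that permits this collapse, after which the half-space observation and the product structure on $N(\widetilde U)$ are the standard tools that convert the bi-infinite limit into a superconvexity violation.
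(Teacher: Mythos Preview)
The paper does not give a proof of this lemma; it simply records it as a consequence of \cite[Lem~2.39]{WiseIsraelHierarchy}. So there is no in-paper argument to compare against. Your direct proof is essentially the standard one and the overall strategy is sound: translate into a fundamental domain, pigeonhole the hyperplanes by local finiteness to reduce to a fixed $\widetilde U$, extract a bi-infinite limit by Arzel\`a--Ascoli, and then push across the carrier to produce a parallel geodesic in $\widetilde U$ violating superconvexity.

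There is one technical mismatch you should repair. Superconvexity in the paper is formulated for bi-infinite geodesics in the CAT(0) metric, but you work throughout with combinatorial geodesics in the $1$-skeleton. Your limit $\gamma$ and its translate $\gamma'$ are combinatorial geodesics, and $\gamma'$ sits in the intrinsic $1$-skeleton of $\widetilde U$, which is not the $1$-skeleton of $\widetilde X$; so the final sentence does not literally invoke the stated definition. The fix is painless: take CAT(0) geodesic segments $\sigma_n$ in the convex set $P_n=\widetilde Y\cap N(\widetilde U_n)$ from the start. The Arzel\`a--Ascoli step goes through unchanged (arc-length parametrization, properness of $\widetilde Y$), and the limit $\gamma$ is then a bi-infinite CAT(0) geodesic in the convex subcomplex $\widetilde Y\cap N(\widetilde U)\subset \widetilde U\times\{\tfrac12\}$. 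Its parallel copy $\gamma'\subset\widetilde U$ is now a genuine CAT(0) geodesic in $\widetilde X$ (hyperplanes are totally geodesic), lies in $\neb_{1/2}(\widetilde Y)$, and is disjoint from $\widetilde Y$, giving the desired contradiction.

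Two minor phrasing points: your sentence ``the hyperplane $\widetilde U'_n$ lies in the $1$-neighborhood of $D$'' should read that $\widetilde U'_n$ \emph{meets} that neighborhood; and the pigeonhole step tacitly uses local finiteness of $\widetilde X$, which holds in all of the paper's applications (where $X$ is compact) but is not part of the lemma's hypotheses as stated.
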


We record the following from \cite[Sec~8]{WiseIsraelHierarchy}:
\begin{definition}[Fiber-product]
	\label{def:fiber-product}
Given a pair of combinatorial maps $A\rightarrow X$ and $B\rightarrow X$ between cube complexes,
their \emph{fiber-product} $A\otimes_X B$ is the cube complex
whose $i$-cubes are pairs of $i$-cubes in $A,B$ that map to the same
$i$-cube in $X$.  There is a commutative diagram:
$$\begin{matrix}
A\otimes_X B & \rightarrow & B \\
\downarrow & & \downarrow \\
A & \rightarrow & X \\
\end{matrix}$$
Note that $A \otimes_X B$ is the subspace of $A\times B$ that is the preimage of the diagonal $D\subset X\times X$
under the map $A\times B\rightarrow X\times X$.
For any cube $Q$, the diagonal of  $Q\times Q$ is isomorphic to  $Q$ by either of the projections,
and this makes $D$ into a cube complex isomorphic to $X$.
Thus  $A\otimes_X B$ has an induced cube complex structure.

Our description of $A\otimes_X B$ as a subspace of $A\times B$
endows the fiber-product $A\otimes_X B$ with the property of being a universal receiver in the following sense:
Consider a commutative diagram as below.
Then there is an induced map $C\rightarrow A \otimes_X B$ such that the following diagram commutes:
$$\begin{matrix}
C  &  &\longrightarrow &     & B \\
   &\rotatebox{-45}{$\dashrightarrow$} &  & \nearrow &  \\
\downarrow &     & A\otimes_X B &  & \downarrow \\
&\swarrow && \searrow&\\
A &  & \longrightarrow & & X \\
\end{matrix}$$
\end{definition}

Let $\widetilde A$ be an elevation of $A$ to the universal cover $\widetilde X\to X$. By choosing a basepoint, we can identify $A$ with a subgroup of $\stab_{\pi_1 X}(\widetilde A)$. Define $\widetilde B$ similarly. Then a component of $A\otimes_X B$ can alternatively be described as $(g_1\widetilde A \cap g_2\widetilde B)/(g_1Ag^{-1}_1\cap g_2Bg^{-1}_2)$ for some $g_1,g_2\in\pi_1 X$.

Let $\langle X | \{Y_i\} \rangle$ be a cubical presentation. Thus any cone piece of $Y_i$ can be written as the universal cover of some component of $Y_j\otimes_Y Y_i$ in $\widetilde Y_i$.

\begin{lem}\label{lem:superconvex fiber-product intersection}
Let $A\rightarrow X$ and $B\rightarrow X$ be local-isometries of connected nonpositively curved cube complexes.
Suppose the induced embedding of universal covers $\widetilde A \hookrightarrow \widetilde X$ is  superconvex.
Then the noncontractible components of $A\otimes_X B$ correspond  to the nontrivial intersections of conjugates of $\pi_1(A,a)$ and $\pi_1(B,b)$ in $\pi_1X$.
\end{lem}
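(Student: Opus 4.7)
The plan is to build on the description given in the paragraph preceding the lemma: each component of $A \otimes_X B$ is isomorphic to
\[
(g_1 \widetilde A \cap g_2 \widetilde B)/(g_1 A g_1^{-1} \cap g_2 B g_2^{-1})
\]
for some $g_1, g_2 \in \pi_1 X$. Since $A\to X$ and $B\to X$ are local-isometries, the embeddings $\widetilde A, \widetilde B \hookrightarrow \widetilde X$ are onto convex subcomplexes, so the intersection $g_1 \widetilde A \cap g_2 \widetilde B$ is itself convex, and hence contractible whenever nonempty. Because $\pi_1 X$ acts freely on $\widetilde X$, the group $H := g_1 A g_1^{-1} \cap g_2 B g_2^{-1}$ acts freely on this intersection, and so the fundamental group of the corresponding component equals $H$ whenever $g_1 \widetilde A \cap g_2 \widetilde B \neq \emptyset$. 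Consequently, a component is noncontractible if and only if $H$ is nontrivial and the two elevations meet.

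One direction is immediate: if $H = 1$, then the component is just the convex subcomplex $g_1 \widetilde A \cap g_2 \widetilde B$, which is contractible (or empty, in which case there is no component at all). The crux, and the place where the superconvexity hypothesis is essential, is the converse: showing that if $H$ is nontrivial then $g_1 \widetilde A \cap g_2 \widetilde B$ is automatically nonempty. This is the main obstacle.

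To handle it, pick a nontrivial $h \in H$. Because $\pi_1 X$ acts freely on $\widetilde X$, the element $h$ has no fixed point and, being a combinatorial semi-simple isometry of the CAT(0) cube complex $\widetilde X$, is loxodromic and admits an axis. Both $g_1 \widetilde A$ and $g_2 \widetilde B$ are convex and $h$-invariant, and nearest-point projection to a convex $h$-invariant subcomplex does not increase the displacement $x \mapsto d(x, hx)$; hence the $\widetilde X$-translation length of $h$ is realized within each of $g_1 \widetilde A$ and $g_2 \widetilde B$. Thus $h$ admits an axis $\gamma \subset g_2 \widetilde B$ and an axis $\gamma' \subset g_1 \widetilde A$. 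Any two axes of a loxodromic CAT(0) isometry bound a flat strip of finite width, so $\gamma$ lies in a bounded neighborhood of $g_1 \widetilde A$. The superconvexity of $\widetilde A$, and thus of its translate $g_1 \widetilde A$, then forces the bi-infinite geodesic $\gamma$ to lie inside $g_1 \widetilde A$, yielding $\gamma \subset g_1 \widetilde A \cap g_2 \widetilde B$. The intersection is therefore nonempty, the fundamental group of the component is the nontrivial group $H$, and the component is noncontractible.
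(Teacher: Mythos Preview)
The paper does not supply a proof of this lemma; it is stated immediately after the description of components of $A\otimes_X B$ as quotients $(g_1\widetilde A \cap g_2\widetilde B)/(g_1 A g_1^{-1}\cap g_2 B g_2^{-1})$, in the context of material recorded from \cite[Sec~8]{WiseIsraelHierarchy}. Your argument is correct and is exactly the natural way to fill in the details: convexity makes each nonempty intersection contractible, so the fundamental group of a component equals the intersection of conjugates $H$; and you invoke superconvexity precisely where it is needed, namely to ensure that a nontrivial $H$ actually gives a nonempty intersection $g_1\widetilde A\cap g_2\widetilde B$ (via an axis of a nontrivial $h\in H$ lying in $g_2\widetilde B$ and in a bounded neighborhood of $g_1\widetilde A$).
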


\begin{remark}\label{rem:multiple fiberproduct similar}
	Lemma~\ref{lem:superconvex fiber-product intersection} holds in general for finitely many factors.
	Specifically, the the components of $A_1\otimes_X A_2\otimes_X \cdots \otimes_X A_r$ correspond to the [nontrivial]
	intersections of conjugates of $\pi_1A_1, \pi_1A_2,\ldots, \pi_1A_r$.
\end{remark}

\subsection{Symmetrization and Principalization}
We use the following notation for conjugation: $H^g=gHg^{-1}$.
Two subgroups $H_1,H_2$ of $G$ are \emph{commensurable in $G$}, if there is $g\in G$ such that $H^g_1\cap H_2$ is of finite index in $H^g_1$ and in $H_2$. The \emph{commensurator} of $H$ in $G$, denoted by $\C_G(H)$, is the collection of $g\in G$ such that $H\cap H^g$ is finite index in both $H$ and $H^g$.
\begin{definition}
	Suppose $Y$ is connected. A local-isometry $Y\to X$ is \emph{symmetric} if for each component $K$ of $Y\otimes_X Y$, either $K$ maps isomorphically to each copy of $Y$, or $[\pi_1 Y:\pi_1 K]=\infty$.
\end{definition}

The following is \cite[Lem~8.12]{WiseIsraelHierarchy}.
\begin{lem}
	\label{lem:symmetric characterization}
Let $Y\to X$ be a superconvex local-isometry with $Y$ compact. Then $Y$ is symmetric if and only if $\C_{\pi_1 X}(\pi_1 Y)=\stab(\widetilde Y)$ and $\pi_1 Y\le \stab(\widetilde Y)$ is a normal subgroup.
\end{lem}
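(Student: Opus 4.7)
The plan is to translate both sides of the biconditional into conditions on elements $g\in\pi_1 X$ and compare. By Lemma~\ref{lem:superconvex fiber-product intersection}, the superconvexity of $\widetilde Y\hookrightarrow\widetilde X$ lets us identify the noncontractible components of $Y\otimes_X Y$ with (equivalence classes of) elements $g\in\pi_1 X$: the component $K_g$ has universal cover $\widetilde Y\cap g\widetilde Y$ and fundamental group $\pi_1 Y\cap g\pi_1 Y g^{-1}$. The two projections $K_g\to Y$ are the quotient maps $(\widetilde Y\cap g\widetilde Y)/(\pi_1 Y\cap g\pi_1 Yg^{-1})\to\widetilde Y/\pi_1 Y$ and $\to g\widetilde Y/(g\pi_1 Y g^{-1})$, and both are isomorphisms precisely when $\widetilde Y=g\widetilde Y$ and $\pi_1 Y=g\pi_1 Y g^{-1}$, i.e.\ when $g\in\stab(\widetilde Y)\cap N_{\pi_1 X}(\pi_1 Y)$. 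On the other hand, $[\pi_1 Y:\pi_1 K_g]<\infty$ is exactly the condition $g\in\C_{\pi_1 X}(\pi_1 Y)$. Thus $Y\to X$ is symmetric if and only if every $g\in \C_{\pi_1 X}(\pi_1 Y)$ already lies in $\stab(\widetilde Y)\cap N_{\pi_1 X}(\pi_1 Y)$.

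Next I would establish the always-true inclusion $\stab(\widetilde Y)\subseteq\C_{\pi_1 X}(\pi_1 Y)$. If $g\in\stab(\widetilde Y)$, then both $\pi_1 Y$ and $g\pi_1 Y g^{-1}$ act freely and properly discontinuously on $\widetilde Y$ with compact quotient $Y$; a standard covering-space argument then forces $\pi_1 Y\cap g\pi_1 Y g^{-1}$ to have finite index in each, so $g$ commensurates $\pi_1 Y$. Combined with the evident inclusion $N_{\pi_1 X}(\pi_1 Y)\subseteq\C_{\pi_1 X}(\pi_1 Y)$, the characterization of symmetric from the previous paragraph becomes the collapse of the chain
\[
\stab(\widetilde Y)\cap N_{\pi_1 X}(\pi_1 Y)\ \subseteq\ \stab(\widetilde Y)\ \subseteq\ \C_{\pi_1 X}(\pi_1 Y)
\]
to equalities, which is exactly the conjunction of $\C_{\pi_1 X}(\pi_1 Y)=\stab(\widetilde Y)$ and $\pi_1 Y\trianglelefteq\stab(\widetilde Y)$.

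The only real work is the identification of components of $Y\otimes_X Y$ with intersections $\widetilde Y\cap g\widetilde Y$ together with the correct fundamental-group data, which is handed to us by Lemma~\ref{lem:superconvex fiber-product intersection}; the remainder is bookkeeping. The subtle point worth flagging is that $g\widetilde Y=\widetilde Y$ does \emph{not} automatically give $g\pi_1 Y g^{-1}=\pi_1 Y$, so the two hypotheses in the lemma statement are genuinely independent: the equality $\C_{\pi_1 X}(\pi_1 Y)=\stab(\widetilde Y)$ handles which $g$'s appear in the fiber-product, while the normality condition ensures that for such $g$ the projections $K_g\to Y$ really are isomorphisms rather than merely covers.
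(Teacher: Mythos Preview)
The paper does not actually prove this lemma; it simply cites \cite[Lem~8.12]{WiseIsraelHierarchy}. So there is no in-paper argument to compare against. Your approach---translating components of $Y\otimes_X Y$ into pairs $(\widetilde Y,g\widetilde Y)$ via Lemma~\ref{lem:superconvex fiber-product intersection} and then reading off the group-theoretic conditions---is exactly the natural one and is essentially correct.

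One small imprecision deserves comment. You assert that ``$[\pi_1 Y:\pi_1 K_g]<\infty$ is exactly the condition $g\in\C_{\pi_1 X}(\pi_1 Y)$.'' Taken literally for a single projection this only says $[\pi_1 Y:\pi_1 Y\cap g\pi_1 Yg^{-1}]<\infty$, which is half of the commensurator condition. What saves the argument is that the definition of symmetric quantifies over \emph{all} components, and the involution of $Y\otimes_X Y$ swapping the two factors exchanges $K_g$ with $K_{g^{-1}}$ while interchanging the two projections; applying the symmetric hypothesis to both $K_g$ and $K_{g^{-1}}$ yields finiteness of both indices, hence $g\in\C_{\pi_1 X}(\pi_1 Y)$. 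Conversely, if $g\in\C_{\pi_1 X}(\pi_1 Y)$ then both indices are finite, so neither component satisfies the infinite-index alternative. It would strengthen the write-up to make this $g\leftrightarrow g^{-1}$ symmetry explicit rather than folding it into a single sentence. With that clarification the chain
\[
\stab(\widetilde Y)\cap N_{\pi_1 X}(\pi_1 Y)\ \subseteq\ \stab(\widetilde Y)\ \subseteq\ \C_{\pi_1 X}(\pi_1 Y)
\]
collapses exactly as you describe, and the proof is complete.
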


The following result implies that if $\pi_1 X$ is word-hyperbolic, then for a given superconvex local-isometry $Y\to X$ with $Y$ compact, we can produce a component $K$ of multiple fiber-products of $Y\to X$ such that $K$ is symmetric and $\pi_1K$ is of finite index in $\pi_1Y$.

\begin{lem}
	\label{lem:commensurator}
	\cite{KapovichShort96}
	Let $H$ be a quasiconvex subgroup of a word-hyperbolic group $G$. Then $H$ has finite index in the commensurator of $H$ inside $G$.
\end{lem}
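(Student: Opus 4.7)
My plan is to prove Lemma~\ref{lem:commensurator} by contradiction, using the finiteness of height of quasiconvex subgroups of word-hyperbolic groups established by Gitik--Mitra--Rips--Sageev \cite{GMRS98} (which is the hyperbolic-group ancestor of Definition~\ref{def:height} above). I may assume $H$ is infinite, since otherwise the statement is typically understood vacuously or uninteresting in its applications here.

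Write $\C = \C_G(H)$ and note $H \le \C$. Suppose, toward a contradiction, that $[\C : H] = \infty$. Then for every positive integer $n$ we can pick elements $g_1 = 1, g_2, \ldots, g_n \in \C$ lying in pairwise distinct left cosets of $H$, i.e.\ $g_i H \ne g_j H$ for $i \ne j$. The key elementary observation is that, by the very definition of commensurator, for each $g \in \C$ the subgroup $H \cap H^g$ has finite index in $H$. Therefore the finite intersection
\[
H \cap H^{g_2} \cap \cdots \cap H^{g_n}
\]
is an intersection of finitely many finite-index subgroups of $H$, hence itself has finite index in $H$. Since $H$ is infinite, this intersection is infinite, and in particular
\[
\bigcap_{i=1}^{n} H^{g_i} \supseteq H \cap \bigcap_{i=2}^{n} H^{g_i}
\]
is infinite.

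Now I would invoke \cite{GMRS98}: a quasiconvex subgroup $H$ of a word-hyperbolic group $G$ has finite height in the sense of Definition~\ref{def:height}, so there is a uniform bound $h = h(H, G)$ on the length of any sequence $\{g_1, \ldots, g_k\}$ with pairwise distinct cosets $g_i H$ and infinite intersection $\bigcap_i H^{g_i}$. Taking $n > h$ in the construction above produces a sequence violating this bound, a contradiction. Hence $[\C : H] < \infty$.

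The only mildly delicate point is to be sure that the cosets chosen in $\C$ remain distinct as cosets of $H$ in $G$ (they do, trivially, since $\C \supseteq H$ means $g_iH = g_jH$ in $G$ iff $g_iH = g_jH$ in $\C$), and that we only need the intersection \emph{inside} $\bigcap H^{g_i}$ to be infinite, which is weaker than infinitude of the restricted intersection we exhibited. No further genuine obstacle appears; the statement is essentially the content of \cite{KapovichShort96}, obtained by combining the two ingredients above: commensurators force pairwise finite-index intersections, while quasiconvexity plus hyperbolicity forbids arbitrarily long such families.
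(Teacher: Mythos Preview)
Your argument is correct: pairwise distinct cosets $g_iH$ with each $g_i\in\C_G(H)$ yield an intersection $\bigcap_i H^{g_i}$ containing a finite-index subgroup of $H$, hence infinite, and this contradicts the finite height of quasiconvex subgroups \cite{GMRS98} once $n$ exceeds the height. There is no gap.

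As for comparison: the paper does not give a proof of this lemma at all---it simply cites \cite{KapovichShort96} as a black box. Your route deduces the commensurator statement from finite height, which is historically backwards (Kapovich--Short predates GMRS) but perfectly legitimate and, given that the paper already records Lemma~\ref{lem:finite height}, arguably the most economical proof available \emph{within} the paper's existing toolkit. The original argument in \cite{KapovichShort96} instead works directly with limit sets and the geometry of quasiconvex subsets, without passing through the height notion.
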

\begin{definition}\label{defn:principal}
	Let $Z$ be a component of a multiple fiber-product of $Y=\sqcup_i Y_i$ where each $Y_i$ is superconvex. Let $\mathcal{C}$ be the collection of subgroups of $\pi_1 Y$ that are intersections of finitely many conjugates of elements in $\{\pi_1 Y_i\}$.
Then $Z$ is \emph{principal} if for any component $W$ of a multiple fiber-product of $Y=\sqcup_i Y_i$ and any component $K$ of $Z\otimes_X W$,	the map $K\rightarrow Z$ is either an isomorphism or satisfies $[\pi_1Z:\pi_1K]=\infty$. By Lemma~\ref{lem:superconvex fiber-product intersection}, Remark~\ref{rem:multiple fiberproduct similar} and Lemma~\ref{lem:symmetric characterization}, $Z$ is principal if and only if the following conditions hold simultaneously:
\begin{enumerate}
	\item $Z$ is symmetric;
	\item $\pi_1 Z$ does not contain any element of $\mathcal{C}$ as a proper finite index subgroup;
	\item for any component $W$ of a multiple fiber-product of $Y$ with $\pi_1 W=\pi_1 Z$ (up to conjugacy in $\pi_1 X$), $Z\to X$ factors through $W\to X$.
\end{enumerate}
\end{definition}

\begin{definition}
	\label{def:stable}
	Let $\mathcal{Y}=\sqcup Y_i$ be a disjoint union of compact connected components. A local-isometry $\mathcal{Y}\to X$ is \emph{stable} if
	\begin{enumerate}
		\item \label{symmetric}each $Y_i\to X$ is symmetric;
		\item \label{subgroup}if $Y_i\to X$ factors through $Y_j\to X$ via $Y_i\to Y_j\to X$, then either $Y_i\to Y_j$ is an isomorphism, or $[\pi_1 Y_j:\pi_1 Y_i]=\infty$;
		\item \label{intersection}for a component $K$ of $Y_i\otimes_X Y_j$ (it is possible that $i=j$), either at least one of the maps $K\to Y_i$, $K\to Y_j$ is an isomorphism, or there is $Y_k$ such that $Y_k\to X$ factors through $K\to X$ via $Y_k\to K\to X$ and $[\pi_1 K:\pi_1 Y_k]<\infty$.
	\end{enumerate}
\end{definition}
By Definition~\ref{def:stable}.\eqref{subgroup} and \eqref{intersection}, in the second case of (3) we can assume $[\pi_1 Y_i:\pi_1 Y_k]=\infty$ and $[\pi_1 Y_j:\pi_1 Y_k]=\infty$. Moreover, a stable $\mathcal{Y}\to X$ satisfies:
\begin{enumerate}
	\item if $\pi_1 Y_i$ has a finite index subgroup contained in $\pi_1 Y_j$ up to conjugacy in $\pi_1 X$, then $Y_i\to X$ factors through $Y_j\to X$;
	\item for a component $K$ of $Y_{i_1}\otimes_X Y_{i_2}\otimes_X \cdots \otimes_X Y_{i_n}$, there is $Y_k$ such that $Y_k\to X$ factors through $K\to X$ and $[\pi_1 K:\pi_1 Y_k]<\infty$.
\end{enumerate}

Suppose $\mathcal{Y}$ is stable. If we take the principal components of fiber-products of elements in $\mathcal{Y}$ to obtain a new collection $\mathcal{Y}'$, then elements of $\mathcal{Y}'$ are isomorphic to elements of $\mathcal{Y}$ and vice versa.

Lemma~\ref{lem:stable1} and Lemma~\ref{lem:stable2} are consequences of Definition~\ref{def:stable}. The meaning of isomorphism in the statements of these lemmas is as in Definition~\ref{def:morphism}.

\begin{lem}
	\label{lem:stable1}
Let $f:\mathcal{Y}=\sqcup Y_i\to X$ be a local-isometry and suppose there are only finitely many isomorphism classes of multiple fiber-products of $\mathcal{Y}=\sqcup Y_i$. Let $\{Z_i\}$ be the collection of representatives from isomorphism classes of principal components of multiple fiber-products of $\mathcal{Y}=\sqcup Y_i$. Then $\{Z_i\}$ is stable.
\end{lem}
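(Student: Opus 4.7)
The plan is to verify each of the three clauses of Definition~\ref{def:stable} directly for $\{Z_i\}$, invoking the equivalent characterizations of principal components from Definition~\ref{defn:principal}. The underlying structural observation is that fiber-products are closed under re-association: since each $Z_i$ is (isomorphic to) a component of a multiple fiber-product of $\mathcal{Y}$, any component of $Z_i \otimes_X Z_j$ is likewise a component of a multiple fiber-product of $\mathcal{Y}$, so fiber-products of elements of $\{Z_i\}$ remain within our finite pool of isomorphism classes.

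Clause~\eqref{symmetric} of Definition~\ref{def:stable} is immediate from the first characterization of being principal. For clause~\eqref{subgroup}, suppose $Z_i \to X$ factors as $Z_i \to Z_j \to X$. The universal property of the fiber-product yields a canonical map $Z_i \to Z_j \otimes_X Z_i$; let $K$ denote the component containing its image. The composition $Z_i \to K \to Z_i$ with the projection onto $Z_i$ is the identity, which forces $\pi_1 K = \pi_1 Z_i$ (as subgroups of $\pi_1 X$), and indeed $K \cong Z_i$ via their common superconvex elevation $\widetilde{Z}_i \subseteq \widetilde{X}$. Now I apply the principal property of $Z_j$ to the component $K$ of $Z_j \otimes_X Z_i$: either $K \to Z_j$ is an isomorphism, in which case $Z_i \to Z_j$ is the composition $Z_i \cong K \to Z_j$ of isomorphisms; or $[\pi_1 Z_j : \pi_1 K] = \infty$, which rewrites as $[\pi_1 Z_j : \pi_1 Z_i] = \infty$.

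For clause~\eqref{intersection}, let $K$ be a component of $Z_i \otimes_X Z_j$. By the associativity observation, $K$ is a component of a multiple fiber-product of $\mathcal{Y}$. I would then select a principal representative $Z_k$ in the list $\{Z_i\}$ whose $\pi_1 Z_k$ is conjugate to $\pi_1 K$ in $\pi_1 X$. Applying the principal property of $Z_k$ with $W = K$ (having the same $\pi_1$), one obtains that $Z_k \to X$ factors as $Z_k \to K \to X$, with $[\pi_1 K : \pi_1 Z_k] = 1 < \infty$, producing the $Y_k$ required by Definition~\ref{def:stable}\eqref{intersection}.

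The main obstacle I anticipate is justifying, in the final step, that such a principal $Z_k$ really belongs to our list---equivalently, that every conjugacy class of $\pi_1$-subgroups realized by a component of a multiple fiber-product of $\mathcal{Y}$ is realized by \emph{some} principal component. I expect this to follow by a principalization argument leveraging finiteness of isomorphism classes: starting from any component $W_0$ in the desired $\pi_1$-class, iteratively replace $W_0$ by an appropriate component of a fiber-product with candidates that witness failures of the three principal conditions, and invoke finiteness of isomorphism classes to force termination on a principal component still in the conjugacy class of $\pi_1 W_0$. The bookkeeping that keeps the $\pi_1$-class fixed throughout this procedure is where the argument requires real care; everything else is a direct application of Definition~\ref{defn:principal}.
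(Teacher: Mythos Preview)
The paper offers no proof beyond the remark that Lemmas~\ref{lem:stable1} and~\ref{lem:stable2} are consequences of Definition~\ref{def:stable}, so your write-up is already more detailed than the original. Your treatments of clauses~\eqref{symmetric} and~\eqref{subgroup} are correct.

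There is, however, a genuine gap in clause~\eqref{intersection}. You aim for a principal $Z_k$ with $\pi_1 Z_k$ \emph{conjugate} to $\pi_1 K$, and you rightly flag this as the crux---but that target is too strong. Condition~(2) of the equivalent characterization in Definition~\ref{defn:principal} forces $\pi_1 Z_k$ to be minimal (under inclusion) within its commensurability class in~$\mathcal C$. There is no reason the intersection $\pi_1 K = \pi_1 Z_i \cap (\pi_1 Z_j)^g$ should itself be minimal, and when it is not, \emph{no} principal component lies in its conjugacy class. Your principalization procedure therefore cannot keep the conjugacy class of $\pi_1 W_0$ fixed: whenever condition~(2) is the one that fails, the replacement step necessarily passes to a proper finite-index subgroup.

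The repair is to aim only for the commensurability class. The iteration then terminates (via the finiteness hypothesis) on a principal $Z_k$ with $\pi_1 Z_k$ commensurable to $\pi_1 K$, and condition~(2) forces a conjugate of $\pi_1 Z_k$ to sit inside $\pi_1 K$ with finite index. To obtain the factoring $Z_k \to K$ you can no longer invoke condition~(3) of Definition~\ref{defn:principal} directly with $W=K$, since the $\pi_1$'s need not agree; instead take the component $K'$ of $Z_k \otimes_X K$ with $\pi_1 K' = \pi_1 Z_k$, apply condition~(3) with $W=K'$ to obtain $Z_k \to K'$, and compose with the fiber-product projection $K' \to K$.
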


\begin{lem}
	\label{lem:stable2}
	Let $\mathcal{Y}$ be a stable collection such that none of its components are
	 isomorphic. Let $\mathcal{Y}_1$ be a new collection formed by replacing each element of $\mathcal{Y}$ by a finite cover, and let $\mathcal{Y}_2$ be representatives of isomorphism classes of principal components of multiple fiber-products of elements in $\mathcal{Y}_1$. Then each element in $\mathcal{Y}_2$ is a finite cover of a unique element in $\mathcal{Y}$, and this gives a 1-1 correspondence between $\mathcal{Y}_2$ and $\mathcal{Y}$.
\end{lem}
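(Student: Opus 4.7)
The plan is to construct an explicit map $\Phi:\mathcal{Y}_2\to\mathcal{Y}$ sending each principal component $Z$ to the unique element of $\mathcal{Y}$ that $Z$ finitely covers, and then verify that $\Phi$ is a bijection. Given $Z\in\mathcal{Y}_2$ realized as a principal component of $Y^1_{i_1}\otimes_X\cdots\otimes_X Y^1_{i_n}$, Lemma~\ref{lem:superconvex fiber-product intersection} together with Remark~\ref{rem:multiple fiberproduct similar} identifies $\pi_1 Z$ (up to conjugation in $\pi_1 X$) with the intersection $g_1\pi_1 Y^1_{i_1}g_1^{-1}\cap\cdots\cap g_n\pi_1 Y^1_{i_n}g_n^{-1}$ for some $g_j\in\pi_1 X$. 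Since each $\pi_1 Y^1_{i_j}$ is finite-index in $\pi_1 Y_{i_j}$, the parallel intersection $H:=g_1\pi_1 Y_{i_1}g_1^{-1}\cap\cdots\cap g_n\pi_1 Y_{i_n}g_n^{-1}$ contains $\pi_1 Z$ with finite index, and $H=\pi_1 K$ for some component $K$ of the corresponding multiple fiber product of elements of $\mathcal{Y}$. By consequence~(2) listed immediately after Definition~\ref{def:stable}, stability of $\mathcal{Y}$ yields some $Y_k\in\mathcal{Y}$ with $Y_k\to X$ factoring through $K\to X$ and $[\pi_1 K:\pi_1 Y_k]<\infty$; I set $\Phi(Z):=Y_k$.

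Next I verify that $Y_k$ is unique and that $Z\to Y_k$ is a finite cover. Any $Y_{k'}$ satisfying the same condition would have $\pi_1 Y_{k'}$ commensurable with $\pi_1 Y_k$ inside $H$; consequence~(1) after Definition~\ref{def:stable} combined with condition~(2) of Definition~\ref{def:stable} then forces mutual isomorphism $Y_k\cong Y_{k'}$, and the non-isomorphism hypothesis on $\mathcal{Y}$ gives $Y_k=Y_{k'}$. For the cover assertion, I consider the enlarged fiber product $Y^1_k\otimes_X Y^1_{i_1}\otimes_X\cdots\otimes_X Y^1_{i_n}$: a component $W$ has $\pi_1 W=\pi_1 Y^1_k\cap\pi_1 Z$ (after suitable conjugation), which is finite-index in $\pi_1 Z$ because $\pi_1 Y^1_k$ is finite-index in $\pi_1 Y_k\subseteq H$ and $\pi_1 Z$ is finite-index in $H$. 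Since $\pi_1 Y^1_k\cap\pi_1 Z$ lies in the collection $\mathcal{C}$ associated to the principal component $Z$, condition~(2) of Definition~\ref{defn:principal} forbids it from being a proper finite-index subgroup of $\pi_1 Z$; hence $\pi_1 Z\subseteq\pi_1 Y^1_k\subseteq\pi_1 Y_k$, and combined with $[\pi_1 Y_k:\pi_1 Z]<\infty$ this exhibits $Z$ as a finite cover of $Y_k$.

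For bijectivity: injectivity follows since if $\Phi(Z)=\Phi(Z')=Y_k$, a component of the combined fiber product has $\pi_1$ equal to $\pi_1 Z\cap\pi_1 Z'$; principality condition~(2) applied to both $Z$ and $Z'$ forces $\pi_1 Z=\pi_1 Z'$ up to conjugation, and principality condition~(3) then produces mutual factoring, yielding $Z\cong Z'$. For surjectivity, fix $Y_k\in\mathcal{Y}$; the trivial fiber product $Y^1_k$ lies in the commensurability class of $\pi_1 Y_k$. Running the principalization procedure of Lemma~\ref{lem:stable1} within this commensurability class—refining by fiber products with further elements of $\mathcal{Y}_1$ whenever any of the three principality conditions fails—produces a principal component $Z$, which lies in $\mathcal{Y}_2$ and satisfies $\Phi(Z)=Y_k$.

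The main obstacle I anticipate is the careful conjugation bookkeeping when transferring intersections between the $\mathcal{Y}_1$ and $\mathcal{Y}$ levels, and in particular the verification that $\pi_1 Y^1_k\cap\pi_1 Z$ genuinely lies in the collection $\mathcal{C}$ so that principality condition~(2) can be invoked to force $\pi_1 Z\subseteq\pi_1 Y^1_k$. A secondary subtlety is termination of the principalization step in surjectivity, which relies on the finite-index control inherited from stability of $\mathcal{Y}$ to rule out indefinite refinement within a fixed commensurability class.
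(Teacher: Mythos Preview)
The paper does not give a proof of this lemma: it simply asserts (just before the statements of Lemmas~\ref{lem:stable1} and~\ref{lem:stable2}) that both are ``consequences of Definition~\ref{def:stable}.'' Your argument is therefore a genuine expansion of what the paper leaves implicit, and it follows the natural route---tracking commensurability classes and invoking the defining property of principality to pin down $\pi_1 Z$.

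Your construction of $\Phi$, the uniqueness of $Y_k$, and the injectivity argument are correct. In the finite-cover step, one point deserves to be made explicit: from the isomorphism $K'\to Z$ (where $K'$ is the chosen component of $Z\otimes_X Y^1_k$) you get not only $\pi_1 Z\subseteq \pi_1 Y^1_k$ but also a factoring $Z\cong K'\to Y^1_k\to Y_k$, and since $\widetilde K'=\widetilde Y_k$ this factoring is a covering map. Your sentence ``this exhibits $Z$ as a finite cover of $Y_k$'' is correct but compresses this; the containment $\pi_1 Z\le\pi_1 Y_k$ of finite index alone does not force a cover without knowing the universal covers agree.

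The genuinely incomplete step is surjectivity, as you yourself flag. Appealing to a ``principalization procedure'' does require knowing that the refinement terminates, i.e.\ that within the commensurability class of $\pi_1 Y_k$ there is a minimal element of $\mathcal C$ (for $\mathcal Y_1$). One clean way to see this in the paper's setting: since each $Y_i$ is symmetric, $\pi_1 Y_i$ is normal in its commensurator $C_i=\mathcal C_{\pi_1 X}(\pi_1 Y_i)$ (Lemma~\ref{lem:symmetric characterization}), and all elements of $\mathcal C$ commensurable with $\pi_1 Y_k$ lie between $\pi_1 Y_k$ and $C_k$ after conjugation; the finite-index bound then comes from compactness (or, in the hyperbolic applications, from Lemma~\ref{lem:commensurator}). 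This is presumably the ``finiteness control inherited from stability'' you allude to, and making it precise would complete the argument.
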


\begin{lem}
	\label{lem:stable}
	Suppose $\mathcal{Y} \to X$ is stable where $\mathcal Y=\sqcup Y_i$ and each $Y_i$ is compact. Let $\mathcal{\widehat Y}=\sqcup \widehat Y_i$ where each $\widehat Y_i\rightarrow Y_i$ is a finite cover. Suppose
	\begin{enumerate}
		\item $\pi_1\widehat Y_i\trianglelefteq\C_{\pi_1 X}(\pi_1\widehat Y_i)$;
		\item for $i\neq j$, if $Y_i\to X$ factors through $Y_j\to X$ via $f:Y_i\to Y_j$, then $f$ factors through $\widehat Y_j\to Y_j$.
	\end{enumerate}
	Then $\mathcal{\widehat Y}\to X$ is stable.
\end{lem}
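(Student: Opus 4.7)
The plan is to verify each of the three conditions of Definition~\ref{def:stable} for $\widehat{\mathcal Y}\to X$ by translating them to the corresponding statements for $\mathcal Y\to X$. The key underlying observation is that $\widetilde{\widehat Y}_i=\widetilde Y_i$ and that $\pi_1\widehat Y_i$ has finite index in $\pi_1 Y_i$, so $\stab_{\pi_1X}(\widetilde{\widehat Y}_i)=\stab_{\pi_1X}(\widetilde Y_i)$ and the commensurators $\C_{\pi_1X}(\pi_1\widehat Y_i)=\C_{\pi_1X}(\pi_1 Y_i)$ coincide (commensurable subgroups share their commensurator).

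Condition~\eqref{symmetric} is then immediate from Lemma~\ref{lem:symmetric characterization}: symmetry of $Y_i$ combined with the equalities above yields $\C_{\pi_1X}(\pi_1\widehat Y_i)=\stab_{\pi_1X}(\widetilde{\widehat Y}_i)$, and hypothesis~(1) supplies the required normality. For condition~\eqref{subgroup} with $i\ne j$ (the $i=j$ case is an automorphism, hence automatic), a factoring $\widehat Y_i\to\widehat Y_j$ implies that $\pi_1 Y_i$ has a finite-index subgroup contained in $\pi_1 Y_j$ up to conjugacy, so by the first consequence of stability listed after Definition~\ref{def:stable} there is a factoring $f:Y_i\to Y_j$ over $X$. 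Stability~\eqref{subgroup} of $\mathcal Y$ then bifurcates: either $[\pi_1 Y_j:\pi_1 Y_i]=\infty$, which transfers to the hats via the finite-index inclusions, or $f$ is an isomorphism, in which case applying hypothesis~(2) to both $f$ and $f^{-1}$ forces $\widehat Y_i\to Y_i$ and $\widehat Y_j\to Y_j$ to be isomorphisms, whence $\widehat Y_i\to\widehat Y_j$ is also an isomorphism.

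Condition~\eqref{intersection} is the main work. A component $\widehat K$ of $\widehat Y_i\otimes_X\widehat Y_j$ covers a component $K$ of $Y_i\otimes_X Y_j$, and using the group-theoretic description following Lemma~\ref{lem:superconvex fiber-product intersection} we may write $\pi_1 K=\pi_1 Y_i\cap g\pi_1 Y_jg^{-1}$ and $\pi_1\widehat K=\pi_1\widehat Y_i\cap g\pi_1\widehat Y_jg^{-1}$ for a suitable $g\in\pi_1 X$. Apply stability~\eqref{intersection} of $\mathcal Y$ to $K$. If, say, $K\to Y_i$ is an isomorphism then $\pi_1 Y_i\subseteq g\pi_1 Y_jg^{-1}$ produces a factoring $Y_i\to Y_j$, and the dichotomy from the previous paragraph applies: in the infinite-index sub-case hypothesis~(2) upgrades this to $\pi_1 Y_i\subseteq g\pi_1\widehat Y_jg^{-1}$, so $\pi_1\widehat K=\pi_1\widehat Y_i$ and $\widehat K\to\widehat Y_i$ is an isomorphism, while in the iso sub-case $\widehat Y_\bullet=Y_\bullet$ forces $\widehat K=K$ already. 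Otherwise there is $Y_k$ with $\pi_1 Y_k\subseteq\pi_1 K$ of finite index, and the remark following Definition~\ref{def:stable} gives $[\pi_1 Y_i:\pi_1 Y_k]=[\pi_1 Y_j:\pi_1 Y_k]=\infty$; hypothesis~(2) applied to the two factorings $Y_k\to Y_i$ and $Y_k\to Y_j$ then places $\pi_1 Y_k$ inside both $\pi_1\widehat Y_i$ and $g\pi_1\widehat Y_jg^{-1}$, so $\pi_1 Y_k\subseteq\pi_1\widehat K$, whence $\pi_1\widehat Y_k\subseteq\pi_1 Y_k\subseteq\pi_1\widehat K$ is of finite index as required. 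The main subtlety lies in this last step: one must keep careful track of the conjugating element $g$ so that hypothesis~(2) can be applied independently in each factor of the fiber-product description, and the remark after Definition~\ref{def:stable} is used precisely to guarantee that the $Y_k$ supplied by stability is distinct enough from $Y_i,Y_j$ to make hypothesis~(2) available.
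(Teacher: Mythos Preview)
Your proof is essentially correct and follows the same strategy as the paper: verify the three conditions of Definition~\ref{def:stable} for $\widehat{\mathcal Y}$, with condition~\eqref{intersection} being the substantive case. Your group-theoretic bookkeeping with the conjugating element $g$ is equivalent to the paper's topological argument with basepoints and the universal property of fiber-products.

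One point deserves to be made explicit. You write that ``one must keep careful track of the conjugating element $g$,'' but you do not say how this is accomplished. The reason your inclusions $\pi_1 Y_k\subseteq \pi_1\widehat Y_i$ and $\pi_1 Y_k\subseteq g\,\pi_1\widehat Y_j\,g^{-1}$ hold with the \emph{same} $g$ is that hypothesis~(1) forces $\pi_1\widehat Y_j\trianglelefteq \pi_1 Y_j$, so the cover $\widehat Y_j\to Y_j$ is regular and every lift of the factoring $Y_k\to Y_j$ lands in the \emph{same} conjugate $g\,\pi_1\widehat Y_j\,g^{-1}$. The paper makes exactly this move (``Since $\widehat Y_i\to Y_i$ is regular by~(1), we can assume $f_i(c)=\hat a$''). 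Without invoking regularity here your argument has a genuine gap: hypothesis~(2) alone only gives containment in \emph{some} conjugate of $\pi_1\widehat Y_j$ inside $g\,\pi_1 Y_j\,g^{-1}$, not necessarily $g\,\pi_1\widehat Y_j\,g^{-1}$ itself.

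A smaller remark: your assertion that the $i=j$ case of condition~\eqref{subgroup} is ``an automorphism, hence automatic'' is correct but not immediate. A self-morphism $\widehat Y_i\to\widehat Y_i$ over $X$ corresponds to an inclusion $\pi_1\widehat Y_i\subseteq g\,\pi_1\widehat Y_i\,g^{-1}$; one needs symmetry of $Y_i$ to place $g$ in $\C_{\pi_1X}(\pi_1 Y_i)=\C_{\pi_1X}(\pi_1\widehat Y_i)$, and then hypothesis~(1) to turn the inclusion into an equality.
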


\begin{proof}
	It suffices to verify conditions in Definition~\ref{def:stable}. We only verify Definition~\ref{def:stable}.(3) since the other conditions are similar and simpler. Let $\widehat K$ be a component of $\widehat Y_i\otimes_{X}\widehat Y_j$. Let $\hat p\in\widehat K$ be a basepoint represented by $(\hat a,\hat b)$ with $\hat a\in\widehat Y_1$ and $\hat b\in\widehat Y_2$. Then $(\widehat K,\hat p)$ covers a component $(K,p)$ of $Y_i\otimes_X Y_j$ with $p$ represented by $(a,b)$. Let $Y_k\to K$ be as Definition~\ref{def:stable}.(3). Choose a point $c\in Y_k$ that is mapped to the basepoint $p\in K$. Note that $Y_k\to X$ factors through $Y_i\to X$ via the composition $f_i:Y_k\to K\to Y_i\otimes_X Y_j\to Y_i$. By assumption (2) of the lemma,
	each  $f_i$ factors through $\hat f_i:Y_k\to \widehat Y_i$. Since $\widehat Y_i\to Y_i$ is regular by (1), we can assume $f_i(c)=\hat a$. Similarly, we define $\hat f_j:Y_k\to \widehat Y_j$ such that $f_j(c)=\hat b$. By the universal property of fiber-products, $\hat f_i$ and $\hat f_j$ induces $f:Y_k\to \widehat Y_i\otimes\widehat Y_j$. Since $f(c)=(\hat a,\hat b)$, we actually have $f:Y_k\to \widehat K$, and we are done by considering the composition $\widehat Y_k\to Y_k\to \widehat K$.
\end{proof}

%

\section{Stature, big-trees and depth-reducing quotients}
\label{sec:height big-trees and depth}
\subsection{Big-trees and Stature}
\label{subsec:big-tree}
We review several notions from \cite[Sec~3.1]{HuangWiseSeparability}. Let $G$ be the fundamental group of a finite graph of groups with underlying graph $\mathcal{G}$, and let $T$ be the associated Bass-Serre tree. 
A subtree $S\subset T$ is \emph{nontrivial} if $S$ contains at least one edge.
Let $\pstab(S)$ denote the pointwise stabilizer of $S$.
When is $S$ is nontrivial, $\pstab(S)=\cap_{e \in \text{Edges}(S)} \pstab(e)$,
and so  $\pstab(S)$ equals the intersection of conjugates of edges groups of $G$. 

\begin{definition}
	\label{def:big-tree}
	\cite[Def~3.1]{HuangWiseSeparability}
A \emph{big-tree} is a subtree $S\subset T$ such that
\begin{itemize}
	\item $S$ is nontrivial;
	\item $\pstab(S)$ is infinite;
	\item there does not exist
	a subtree $S'\subset T$ with $S\subsetneq S'$ and $\pstab(S)=\pstab(S')$.
\end{itemize}
\end{definition}


Choose a maximal tree of $\mathcal{G}$ and lift this tree to a subtree  $T_{\mathcal{G}}\subset T$. This gives  an identification of vertex groups of $G$ and stabilizers of vertices in $T_{\mathcal{G}}$. For each vertex $v\in T$,  choose $g_v\in G$ with $g_v v\in T_{\mathcal{G}}$. (Though 
$g_v v$ is unique, there may be multiple choices for $g_v$.)


A \emph{based big-tree} $(S,v)$ consists of a big-tree $S\subset T$ and a vertex $v\in S$. For each $(S,v)$, define an \emph{$(S,v)$-transection} to be a subgroup of $\stab(g_vv)$ of form $g_v\pstab(S)g^{-1}_v$. Different choices of $g_v$ yield different $(S,v)$-transections, but they are  conjugate to each other within $\stab(g_vv)$. For two different vertices $v_1,v_2\in S$, the inclusions $ \stab(v_1)\hookleftarrow\pstab(S)\hookrightarrow \stab(v_2)$ induce an isomorphism between an $(S,v_1)$-transection and an $(S,v_2)$-transection. This is called a \emph{transfer isomorphism}, and is well-defined up to conjugacy in the vertex groups.

\begin{lem}
	\label{lem:correspondence}
	\cite[Lem~3.3]{HuangWiseSeparability}
Let $(S_1,v_1)$ and $(S_2,v_2)$ be based big-trees. Then there exists $g\in G$ such that $g(S_1,v_1)=(S_2,v_2)$ if and only if there exist $g_1,g_2\in G$ and $w\in T_{\mathcal{G}}$ such that $g_1v_1=g_2v_2=w$ and any $(S_1,v_1)$-transection and $(S_2,v_2)$-transection are conjugate in $\stab(w)$.
\end{lem}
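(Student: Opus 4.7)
The plan is to prove both implications directly; the reverse direction hinges on a short uniqueness observation about big-trees, while the forward direction is a routine unwinding of definitions.

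For the forward implication, suppose $g \in G$ satisfies $g(S_1, v_1) = (S_2, v_2)$. I would set $g_2 := g_{v_2}$, $w := g_2 v_2 \in T_{\mathcal G}$, and $g_1 := g_2 g$. Then $g_1 v_1 = g_2 g v_1 = g_2 v_2 = w$, so $g_1$ is a valid choice of $g_{v_1}$. Using the identity $\pstab(S_2) = g\,\pstab(S_1)\,g^{-1}$, the $(S_1,v_1)$-transection $g_1\pstab(S_1)g_1^{-1}$ simplifies to $g_2\pstab(S_2)g_2^{-1}$, so these two particular transections are literally equal. Since all $(S_i,v_i)$-transections for a fixed $v_i$ arise by conjugation inside $\stab(w)$ (as recorded in the paragraph preceding the lemma), this forces any $(S_1,v_1)$-transection and any $(S_2,v_2)$-transection to be conjugate in $\stab(w)$.

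The reverse implication rests on the following sublemma, which I would prove first: \emph{for any big-tree $S$, one has $S = T^{\pstab(S)}$, where $T^P$ denotes the fixed-point subtree of $P \le G$ in $T$.} The proof is short: $T^P$ is always a convex subtree of $T$ (fixed sets of group actions on trees are convex), the inclusion $S \subseteq T^{\pstab(S)}$ is tautological, and $\pstab(T^{\pstab(S)})$ is sandwiched between $\pstab(S)$ (by the inclusion) and $\pstab(S)$ (since $\pstab(S)$ acts trivially on $T^{\pstab(S)}$), hence equals $\pstab(S)$; the maximality clause of Definition~\ref{def:big-tree} now forces $S = T^{\pstab(S)}$. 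In particular, a big-tree is determined by its pointwise stabilizer.

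With the sublemma in hand, assume $g_1 v_1 = g_2 v_2 = w$ and that $h \in \stab(w)$ conjugates $g_1 \pstab(S_1) g_1^{-1}$ to $g_2 \pstab(S_2) g_2^{-1}$. I would then set $g := g_2^{-1} h g_1$. A short computation, using $hw = w$, gives $g v_1 = v_2$, and conjugating yields $\pstab(gS_1) = g\,\pstab(S_1)\,g^{-1} = \pstab(S_2)$. Since $gS_1$ is itself a big-tree (the $G$-action preserves the defining properties), the sublemma forces $gS_1 = T^{\pstab(S_2)} = S_2$, and hence $g(S_1, v_1) = (S_2, v_2)$.

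The only genuine obstacle I anticipate is the sublemma: without the observation that a big-tree is recoverable from its pointwise stabilizer, the reverse direction gets stuck passing from the equality of pointwise stabilizers to the equality of trees. Once the sublemma is in place, the rest is direct manipulation of the definitions of transection and of the action on the Bass--Serre tree.
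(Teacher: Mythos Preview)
Your argument is correct. The forward direction is a clean unwinding of the definitions, and for the reverse direction your sublemma---that a big-tree $S$ equals the full fixed subtree $T^{\pstab(S)}$, forced by the maximality clause in Definition~\ref{def:big-tree}---is exactly the right observation to pass from equality of pointwise stabilizers to equality of trees. The computations with $g = g_2^{-1} h g_1$ are straightforward and accurate.

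Note, however, that the paper does not actually supply its own proof of this lemma: it is quoted verbatim as \cite[Lem~3.3]{HuangWiseSeparability} with no argument given. So there is nothing in the present paper to compare your approach against. Your proof is the natural one and is presumably close to what appears in the cited reference; in any case it stands on its own.
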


\begin{definition}[$\Upsilon$ and $\Upsilon_V$]
	\label{defn:phi}\cite[Def~3.4]{HuangWiseSeparability}
Consider the action of $G$ on the collection of based big-trees. For each $G$-orbit, choose a representative $(S,v)$ and consider an $(S,v)$-transection. Let $\Upsilon$ be the collection of such transections. For a vertex group $V$ of $G$ (above we identified $V$ with the stabilizer of a vertex in $T_{\mathcal{G}}$\,), let $\Upsilon_V\subset\Upsilon$ be the subcollection of $(S,v)$-transections with  $g_vv$ corresponding to the vertex group $V$.
\end{definition}

\begin{definition}
	\label{def:lowest and higher} \cite[Def~3.5]{HuangWiseSeparability}
	A big-tree $S\subset T$ is \emph{lowest} if it is not properly contained in another big-tree. Similarly, a transection in $\Upsilon$ is \emph{lowest}, if the associated big-tree is lowest. 
\end{definition}


\begin{definition}
	\label{def:finite stature1}
	\cite[Def~3.7]{HuangWiseSeparability}
Let $G$ act on a tree $T$ without inversion. $G$ has \emph{finite stature} (with respect to the action $G\acts T$) if the action of $G$ on the collection of based big-trees has finitely many orbits.
\end{definition}

Note that $G$ satisfies Definition~\ref{def:finite stature1} if and only if the collection $\Upsilon$ is finite.

\begin{lem}
	\label{lem:finitely many conjugacy classes0}
	\cite[Lem~3.9]{HuangWiseSeparability}
The following are equivalent.
\begin{enumerate}
	\item $G$ has finite stature with respect to the action $G\acts T$.
	\item For each vertex group $V$ of $G$, there are finitely many $V$-conjugacy classes of infinite subgroups of $V$ that are of form $V\cap (\cap_{e\in E}\stab(e))$, where $E$ is a collection of edges in $T$.
	\item $(G,\mathcal{V})$ has finite stature in the sense of Definition~\ref{def:finite stature}, where $\mathcal{V}$ is the collection of vertex groups of $G$.
\end{enumerate}
\end{lem}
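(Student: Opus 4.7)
The plan is to prove $(1)\Leftrightarrow(2)$ via a bijective correspondence between $G$-orbits of based big-trees and $V$-conjugacy classes of certain subgroups of $V$, then establish $(2)\Leftrightarrow(3)$ by comparing edge-stabilizer and vertex-stabilizer intersections in the tree $T$.

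For $(1)\Leftrightarrow(2)$, I would first apply Lemma~\ref{lem:correspondence}: two based big-trees $(S_1,v_1)$ and $(S_2,v_2)$ lie in the same $G$-orbit if and only if they project to a common vertex of $T_{\mathcal{G}}$ and have $V$-conjugate transections in the corresponding vertex group $V$. Since there are finitely many vertex groups, (1) is equivalent to finiteness of $\Upsilon_V$ for each $V$. It remains to identify $\Upsilon_V$, up to $V$-conjugacy, with the collection of infinite subgroups of $V$ of the form in (2). Every transection has the shape $\pstab(S)=\cap_{e\in\text{Edges}(S)}\stab(e)\subseteq V$ for some big-tree $S$ containing $v_0$ (the vertex of $T_\mathcal{G}$ representing $V$), so it is of the form in (2). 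Conversely, given an infinite $H=V\cap\cap_{e\in E}\stab(e)$, I would set $S:=T^H$, the fixed-point subtree of $H$ in $T$; this contains $v_0$ and each $e\in E$, hence is nontrivial. The inclusion $\pstab(T^H)\subseteq H$ follows since $v_0\in T^H$ and $E\subseteq T^H$, while $\pstab(T^H)\supseteq H$ is tautological, so $\pstab(T^H)=H$ is infinite. Maximality holds because any $S'\supsetneq T^H$ contains an edge not fixed by $H$, forcing $\pstab(S')\subsetneq H$. Hence $T^H$ is a big-tree with transection $H$.

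For $(2)\Leftrightarrow(3)$, I would use that each edge stabilizer in $T$ equals the intersection of its two endpoint vertex stabilizers, $\stab(e)=\stab(v_1)\cap\stab(v_2)$, so every intersection of edge stabilizers is an intersection of vertex stabilizers. Conversely, in a tree, an element fixing two vertices fixes the geodesic joining them; thus $\cap_{v\in V'}\stab(v)=\cap_{e\in E'}\stab(e)$, where $E'$ is the edge set of the subtree spanned by $V'$. Consequently the subgroups $V\cap C$ appearing in Definition~\ref{def:finite stature} with $\mathcal{V}$ as the collection of vertex groups match precisely the subgroups $V\cap\cap_{e\in E}\stab(e)$ of (2), and the equivalence follows.

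The main obstacle is the reverse direction of $(1)\Leftrightarrow(2)$: realizing every infinite intersection $H$ as the transection of an actual big-tree, which requires both producing a subtree with pointwise stabilizer exactly $H$ and verifying the maximality clause in Definition~\ref{def:big-tree}. Both are simultaneously handled by taking $T^H$, the fixed-point set of $H$, since its maximality as a fixed subtree is built into the definition. The remainder of the argument is bookkeeping within the Bass--Serre tree.
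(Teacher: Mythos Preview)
The paper does not supply its own proof of this lemma; it is quoted from \cite[Lem~3.9]{HuangWiseSeparability} and left unproved here. Your argument is correct and is the natural one dictated by the surrounding definitions: Lemma~\ref{lem:correspondence} matches $G$-orbits of based big-trees with $V$-conjugacy classes of transections, and the assignment $H\mapsto T^H$ realizes every infinite subgroup of the form in (2) as the pointwise stabilizer of a genuine big-tree, with maximality automatic from the choice of $T^H$ as the full fixed subtree. The translation between edge and vertex stabilizers for $(2)\Leftrightarrow(3)$ is exactly as you describe.

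One small point worth recording: in the degenerate case where $E$ is empty (equivalently, in (3), where $C$ is an intersection of stabilizers of vertices all equal to $v_0$), the resulting subgroup is $V$ itself, which need not arise from any nontrivial subtree. This contributes at most one extra $V$-conjugacy class and therefore does not affect any of the finiteness assertions, but you may wish to say so explicitly.
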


\begin{remark}
Suppose each vertex group of $G$ is word-hyperbolic, and each edge group is quasiconvex in its vertex groups. Then in view of Lemma~\ref{lem:finite depth} below, we can require the collection $E$ in item (2) of Lemma~\ref{lem:finitely many conjugacy classes0} to be finite.
\end{remark}

In general, if $G$ splits as a graph of groups in two different ways, then it is possible that $G$ has finite stature under one splitting, but not the other splitting, see \cite[Example~3.31]{HuangWiseSeparability}. However, we will
say that $G$ has finite stature without specifying the splitting, when it is unambiguous.

\subsection{Depth and Stature}
We recall a notion measuring the maximal length of an increasing sequence of big trees from \cite[Sec~3.2]{HuangWiseSeparability}. There are two variants according to whether the pointwise stabilizer of these big trees are commensurable.

\begin{definition}
	\label{def:depth}
	\cite[Def~3.10]{HuangWiseSeparability}
		Let $G$ be a group and let $\Lambda=\{H_1,\ldots,H_r\}$ be a finite collection of subgroups.  
	The \emph{commensurable depth} of $\Lambda$ in $G$, denoted $\delta_c(G,\Lambda)$, is the largest $d$, such that there is a strictly increasing chain $L_1 < \cdots < L_{d}$, where each $L_i$ is the intersection of a nonzero finite number of conjugates of elements of $\Lambda$. If there are arbitrarily long such sequences, then  define $\delta_c(G,\Lambda)=\infty$. We say $\Lambda$ has \emph{finite commensurable depth} in $G$ if $\delta_c(G,\Lambda)<\infty$.
\end{definition}



\begin{definition}
	\cite[Def~3.11]{HuangWiseSeparability}
	Let $G$ be a group and let $\Lambda=\{H_1,\ldots,H_r\}$ be a finite collection of subgroups.  The \emph{depth} of $\Lambda$ in $G$, denoted $\delta(G,\Lambda)$, is the largest  $d$, such that there is a strictly increasing chain $L_1 <\cdots < L_{d}$ satisfying the conditions where $|L_1|=\infty$, each $L_i$ is an intersection of a nonzero finite number of conjugates of elements of $\Lambda$, and $[L_{i+1}:L_i]=\infty$. If a largest $d$ does not exist, then  define $\delta(G,\Lambda)=\infty$.
\end{definition}

Note that $\delta_c(G,\Lambda)<\infty$ implies $\delta(G,\Lambda)<\infty$, but the converse may not be true \cite[Example~3.12]{HuangWiseSeparability}.


In the rest of this subsection, we return to our scenario that $G$ splits as a graph $\mathcal{G}$ of groups. Recall that we have identified vertex groups of $G$ with vertex stabilizers of a subtree $T_{\mathcal{G}}\subset T$. We assume in addition that each vertex group of $G$ is word-hyperbolic, and each edge group is quasiconvex in its associated vertex groups. Let $\mathcal{E}$ be the collection of edge groups of $G$. 

The following is proven in \cite{GMRS98} (see also \cite{HruskaWisePacking}):
\begin{lem}
	\label{lem:finite height}
	Let $\{H_1,\ldots,H_r\}$ be a collection of quasiconvex subgroups of a word-hyperbolic group $G$. Then $\{H_1,\ldots,H_r\}$ has finite height in $G$.
\end{lem}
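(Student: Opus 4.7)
The statement is the classical theorem of Gitik--Mitra--Rips--Sageev. Since having finite height for a collection reduces to each member having finite height, the plan is to fix a single quasiconvex subgroup $H \le G$ and show its height is finite. I will argue by contradiction: suppose there are arbitrarily long sequences $g_1, \dots, g_n$ with the cosets $g_iH$ pairwise distinct and $K_n := \bigcap_i g_iHg_i^{-1}$ infinite. Because $H$ is quasiconvex in a word-hyperbolic group it is itself word-hyperbolic, and any infinite subgroup of a word-hyperbolic group contains an element of infinite order; applying this to $K_n \le H$ (after conjugation) produces $h_n \in K_n$ of infinite order.

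Let $\gamma_n \subset \mathrm{Cay}(G)$ be a quasigeodesic axis for $h_n$. Since $h_n \in g_i H g_i^{-1}$, the element $h_n$ stabilizes $g_iH$ setwise, so iterating shows that the $\langle h_n \rangle$-orbit of any point lies uniformly close to $g_iH$. Combined with quasiconvexity of $H$ (hence of every coset $g_iH$) and $\delta$-hyperbolicity, this forces $\gamma_n$ to lie in the $R$-neighborhood of each $g_iH$ for a constant $R = R(\delta, \kappa, |h_n|_\infty)$, where $\kappa$ is the quasiconvexity constant. By raising $h_n$ to a large power one can make $\gamma_n$ arbitrarily long while keeping it in these $R$-neighborhoods.

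The decisive step is a counting lemma: for any $R$, there is a bound $N = N(R,\kappa,\delta,|H \backslash B_L|)$ on the number of distinct cosets $gH$ whose $R$-neighborhood contains a common geodesic segment of length $\ge L$. The idea is that if two such cosets $g_1H, g_2H$ both $R$-fellow-travel a long segment $\sigma$, then projecting $\sigma$ to each gives long matching quasigeodesic sub-paths in $g_1H$ and $g_2H$; thin-rectangle estimates in a hyperbolic space then force $g_1^{-1}g_2$ to lie in a bounded set of double cosets $H g H$, of which only finitely many exist in any ball. Applying this lemma with the long segments of $\gamma_n$ bounds the number of distinct $g_iH$ and contradicts $n \to \infty$.

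The main obstacle is the counting lemma of the third paragraph: turning the geometric picture of many cosets $R$-fellow-traveling a common geodesic into a finite combinatorial bound. All other ingredients (existence of infinite order elements, axes close to invariant quasiconvex sets, thin polygons) are standard, but the fellow-traveling-to-finite-bound step is where the hyperbolicity of $G$ and the quasiconvexity of $H$ must be used together in the essential way.
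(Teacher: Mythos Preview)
The paper does not prove this lemma; it records the statement and cites \cite{GMRS98} (with a secondary reference to \cite{HruskaWisePacking}). Your proposal is a sketch of the Gitik--Mitra--Rips--Sageev argument, and the overall architecture---produce an infinite-order element in the intersection, observe its axis fellow-travels every coset $g_iH$, then bound the number of cosets that can fellow-travel a common geodesic---is the right one.

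There is one point where your write-up risks circularity. You allow $R = R(\delta,\kappa,|h_n|_\infty)$; but then your bound $N$ depends on $R$, hence on $h_n$, and the conclusion $n \le N(h_n)$ does not contradict $n\to\infty$. The argument only closes if $R$ depends on $\delta,\kappa$ alone. This is in fact true: since $h_n$ stabilizes the $\kappa$-quasiconvex set $g_iH$, for any $q\in g_iH$ the geodesics $[h_n^{-k}q,\,h_n^{k}q]$ lie in $N_\kappa(g_iH)$ and subconverge to a bi-infinite geodesic with the same ideal endpoints as the axis of $h_n$; two such geodesics are within $O(\delta)$ of each other, so the axis lies in $N_{\kappa+O(\delta)}(g_iH)$ with no dependence on $h_n$.

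Once $R$ is uniform, your counting step can be replaced by a one-line argument: pick any point $p$ on the axis; for each $i$ there is $h_i\in H$ with $d(p,g_ih_i)\le R$, so each coset $g_iH=(g_ih_i)H$ has a representative in the ball $B_R(p)$, whence $n\le |B_R(1)|$. The long-segment and double-coset machinery you outline is not needed.
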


Each big-tree is uniformly locally finite by Lemma \ref{lem:finite height}. Thus $G$ has finite stature if and only if   the following conditions both hold:
\begin{enumerate}
	\item There are finitely many $G$-orbits of big-trees in $T$;
	\item $\stab(S)$ acts cocompactly on each big-tree $S$.
\end{enumerate}	

\begin{lem}\label{lem:finite depth}
	\cite[Lemma~3.14 and Lem~3.16]{HuangWiseSeparability}
Suppose each vertex group of $G$ is word-hyperbolic, and each edge group is quasiconvex in its vertex groups. Choose a finite subtree $S\subset T$ and a vertex $v\in S$. Then $\pstab(S)\subset\stab(v)$ is quasiconvex. If $G$ has finite stature, then $\delta_c(G,\mathcal{E})<\infty$. Consequently, $\delta(G,\mathcal{E})<\infty$.
\end{lem}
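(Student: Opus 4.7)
My plan is to establish the three assertions separately: (a) quasiconvexity of $\pstab(S)\le\stab(v)$ for a finite subtree $S\ni v$; (b) finite stature $\Rightarrow \delta_c(G,\mathcal E)<\infty$; and (c) $\delta_c(G,\mathcal E)<\infty \Rightarrow \delta(G,\mathcal E)<\infty$. Item (c) is immediate, since every $\delta$-chain is a $\delta_c$-chain.

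For (a), I would induct on $|E(S)|$. The base case $|E(S)|=0$ gives $\pstab(S)=\stab(v)$ trivially. For the inductive step, pick a leaf $u\ne v$ of $S$, let $e$ be its incident edge with other endpoint $w$, and set $S'=S\setminus\{u,e\}$, still a finite subtree containing both $v$ and $w$. Then $\pstab(S)=\pstab(S')\cap\stab(e)$. Applying the inductive hypothesis with basepoint $w$ gives $\pstab(S')$ quasiconvex in $\stab(w)$, and since $\stab(e)$ is a quasiconvex edge group of the hyperbolic vertex group $\stab(w)$, their intersection $\pstab(S)$ is quasiconvex in $\stab(w)$. I would then invoke the standard transfer principle: if $K$ is quasiconvex in a hyperbolic group $G'$ and $H\le K$, then $H$ is quasiconvex in $G'$ iff $H$ is quasiconvex in the hyperbolic group $K$ equipped with its intrinsic word metric. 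Applied to $G'=\stab(w)$, $K=\pstab(S')$, $H=\pstab(S)$, this yields $\pstab(S)$ quasiconvex in $\pstab(S')$; composing with the inductive claim that $\pstab(S')\le\stab(v)$ is quasiconvex (applied with basepoint $v$) completes the step.

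For (b), I would suppose $\delta_c=\infty$ and fix an arbitrarily long strict chain $L_1\subsetneq\cdots\subsetneq L_d$ of finite intersections of conjugates of edge groups. The finite-order $L_i$ form a prefix, and by the bounded torsion of hyperbolic groups together with Lagrange its length is at most $\log_2 C+1$ for a uniform bound $C$ on finite subgroup orders across the vertex groups. For the infinite tail, write each $L_i=\pstab(\hat S_i)$ for the big-tree realizing $L_i$. The inclusion $L_i\subsetneq L_{i+1}$ forces $\hat S_{i+1}\subseteq \hat S_i$: any $\gamma\in L_i\subseteq L_{i+1}$ fixes both $\hat S_i$ and $\hat S_{i+1}$ pointwise and hence fixes their convex hull $U$ pointwise, giving $\pstab(U)\supseteq L_i$; the opposite containment is automatic from $\hat S_i\subseteq U$, so $\pstab(U)=L_i$ and maximality forces $U=\hat S_i$. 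Hence $\hat S_1\supseteq\cdots\supseteq\hat S_d$ is nested, and picking any vertex $v\in\hat S_d$ makes $v$ a common vertex of all $\hat S_i$. Each $L_i$ is then an infinite subgroup of $\stab(v)$ of the form $\stab(v)\cap\pstab(E)$, quasiconvex in $\stab(v)$ by (a). By the finite-stature characterization in Lemma~\ref{lem:finitely many conjugacy classes0}(2), such subgroups lie in finitely many $\stab(v)$-conjugacy classes. Pigeonhole then produces $i<j$ with $L_j=gL_ig^{-1}$ for some $g\in\stab(v)$, hence $L_j\supsetneq g^{-1}L_jg$.

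The main obstacle is the Hopf-type fact required to derive a contradiction from this last strict inclusion: a quasiconvex subgroup $H$ of a hyperbolic group cannot strictly contain a conjugate of itself. When $[H:g^{-1}Hg]$ is finite, Lemma~\ref{lem:commensurator} places $g\in C_G(H)$ with $[C_G(H):H]<\infty$, so some positive power $g^n$ lies in $H$ and the telescope $H\supseteq g^{-1}Hg\supseteq\cdots\supseteq g^{-n}Hg^n=H$ collapses to equalities; when $[H:g^{-1}Hg]$ is infinite, $H$ would contain a proper infinite-index subgroup isomorphic to itself, which is elementarily impossible in the virtually cyclic case and ruled out by cohopfianness in the non-elementary case. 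With this fact in hand, the remainder of (b) is pigeonhole bookkeeping.
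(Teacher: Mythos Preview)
The paper does not prove this lemma; it is quoted from \cite{HuangWiseSeparability}, so there is no in-paper argument to compare against. Your overall architecture---the induction for (a), the big-tree nesting plus pigeonhole for (b), and the trivial (c)---is sound and is in the spirit of how such statements are typically established. The only genuine problems lie in the final ``Hopf-type fact'' you invoke, and both of your case arguments are flawed as written.

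In Case~1 you assert that $g^n\in H$ forces $g^{-n}Hg^n=H$, so that the telescope collapses. But $g^n\in H$ does not imply that $g^n$ normalizes $H$; this step is unjustified. The correct argument is an index count: once $[H:g^{-1}Hg]<\infty$ you have $g\in\C_G(H)$, and since commensurators are conjugation-invariant, $\C_G(H)=\C_G(g^{-1}Hg)=g^{-1}\C_G(H)g$. Conjugation by $g$ is an automorphism, so $[\C_G(H):H]=[\C_G(H):g^{-1}Hg]$; but the latter also equals $[\C_G(H):H]\cdot[H:g^{-1}Hg]$, forcing $[H:g^{-1}Hg]=1$.

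In Case~2 you appeal to co-Hopfianness of non-elementary hyperbolic groups, but this is false: free groups are non-elementary hyperbolic and are not co-Hopfian. What rescues the argument is that the offending subgroup is a \emph{conjugate} of $H$ in the ambient $G$, not merely an abstract copy. Use finite height (Lemma~\ref{lem:finite height}): the conjugates $g^kHg^{-k}$ for $k\ge 0$ all contain $H$, so their common intersection is infinite; hence not all cosets $g^kH$ can be distinct, and some $g^m\in H$. Then $g^m$ preserves the limit set $\Lambda H\subset\partial\stab(v)$, while $g^{-1}\Lambda H=\Lambda(g^{-1}Hg)\subseteq\Lambda H$ gives $\Lambda H\subseteq g\Lambda H\subseteq\cdots\subseteq g^m\Lambda H=\Lambda H$, so $\Lambda(g^{-1}Hg)=\Lambda H$. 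Two quasiconvex subgroups with equal limit sets are commensurable, putting you back in Case~1. With these two repairs the proof goes through.
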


Even if $G$ satisfies the assumption of Lemma~\ref{lem:finite depth}, the converse of Lemma~\ref{lem:finite depth} is not true. See \cite[Example~3.31]{HuangWiseSeparability}.

Since $\delta_c(G,\mathcal{E})<\infty$, the pointwise stabilizer of any subtree of $T$ can be expressed as an intersection of finitely many conjugates of edges groups. Thus the following two lemmas hold.

\begin{lem}
\label{lem:quasiconvex}
Suppose each vertex group of $G$ is word-hyperbolic, and each edge group is quasiconvex in its vertex groups. Let $v$ be a vertex in a subtree $S\subset T$. Then $\pstab(S)\subset\stab(v)$ is quasiconvex. In particular, for any vertex group $V$, each element in $\Upsilon_V$ is quasiconvex.
\end{lem}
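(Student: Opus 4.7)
The plan is to reduce to the finite-subtree case, which is already handled by Lemma~\ref{lem:finite depth}, using the hypothesis $\delta_c(G,\mathcal{E})<\infty$ emphasized in the paragraph preceding the lemma.

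First, I would exhibit a finite subtree $S_0\subset S$ with $v\in S_0$ and $\pstab(S_0)=\pstab(S)$. Consider the family of intersections $\{\,\bigcap_{e\in F}\stab(e):F\subset E(S)\text{ finite}\,\}$, ordered by inclusion; adding edges only refines the intersection, and since any strictly decreasing chain of intersections of conjugates of edge groups has length at most $\delta_c(G,\mathcal{E})$, the family has a minimal element $K=\bigcap_{e\in F_0}\stab(e)$ for some finite $F_0=\{e_1,\ldots,e_k\}\subset E(S)$. By minimality, $K\cap\stab(e)=K$ for every $e\in E(S)$, so $K\subseteq\stab(e)$ for every $e\in E(S)$, and hence $K=\pstab(S)$. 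Now let $S_0$ be the convex hull in $T$ of $\{v\}\cup e_1\cup\cdots\cup e_k$; because $S$ is itself a subtree containing $v$ and each $e_i$, we have $S_0\subset S$, the subtree $S_0$ is finite, and the sandwich $\pstab(S)\subset\pstab(S_0)\subset\bigcap_i\stab(e_i)=\pstab(S)$ forces $\pstab(S_0)=\pstab(S)$.

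Second, Lemma~\ref{lem:finite depth} applied to the finite subtree $S_0$ gives that $\pstab(S_0)\subset\stab(v)$ is quasiconvex, which is exactly the desired quasiconvexity of $\pstab(S)$ in $\stab(v)$. The ``in particular'' clause is then immediate: by Definition~\ref{defn:phi} an element of $\Upsilon_V$ has the form $g_v\pstab(S)g_v^{-1}$ for some based big-tree $(S,v)$ with $g_vv\in T_{\mathcal{G}}$ labelled by $V$, and conjugating by $g_v$ carries the quasiconvex subgroup $\pstab(S)\subset\stab(v)$ onto a quasiconvex subgroup of $g_v\stab(v)g_v^{-1}=\stab(g_vv)=V$.

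There is no serious obstacle here; the only bookkeeping point is ensuring the stabilising finite subtree $S_0$ lies inside $S$ and contains $v$, which is arranged by taking the convex hull in the ambient tree. Everything else is a direct appeal to $\delta_c(G,\mathcal{E})<\infty$ and to the finite-subtree case already packaged inside Lemma~\ref{lem:finite depth}.
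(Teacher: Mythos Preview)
Your proposal is correct and follows exactly the approach the paper indicates in the sentence preceding the lemma: use $\delta_c(G,\mathcal{E})<\infty$ to express $\pstab(S)$ as the pointwise stabilizer of a finite subtree, then invoke the finite-subtree case from Lemma~\ref{lem:finite depth}. You have simply supplied the details the paper leaves implicit (the descending-chain argument producing a minimal finite intersection, and the convex-hull bookkeeping to keep $v\in S_0\subset S$).
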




\subsection{finite stature for the augmented space}
Let $X$ be a graph of nonpositively curved cube complex in
Definition~\ref{def:graph of cube complexes}. In this subsection, we discuss whether having finite stature is preserved if we \textquotedblleft augment\textquotedblright\ $X$ in a certain way. The reader is advised to proceed directly to Section~\ref{subsec_special cover} and come back when needed.

Let $G=\pi_1 X$. Then $G$ has a graph of groups decomposition. Let $T$ be the corresponding Bass-Serre tree. Then the universal cover $\widetilde X$ of $X$ is a tree of CAT$(0)$ cube complexes over $T$. Let $S\subset T$ be a big-tree and let $H_S$ be the image of the natural homomorphism $\stab(S)\to \Aut S$. Then there is an exact sequence $$1\to \pstab(S)\to \stab(S)\to H_S\to 1.$$

\begin{lem}
	\label{lem:proper action}
	Suppose each vertex group of $G$ is hyperbolic and each edge group is quasiconvex in the associated vertex groups. Suppose $\delta_c(G,\mathcal{E})<\infty$, where $\mathcal{E}$ is the collection of edge groups of $G$. Let $S$ be a big-tree. Then the image of $\beta:\stab(S)\to \Aut S$ acts properly on $S$.
\end{lem}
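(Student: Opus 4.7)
The plan is to show that for every vertex $v\in S$, the $H_S$-stabilizer of $v$ is finite. Unwinding the exact sequence, this stabilizer is precisely $(\stab(v)\cap\stab(S))/\pstab(S)$, so it suffices to prove
$$[\,\stab(v)\cap\stab(S)\,:\,\pstab(S)\,]<\infty.$$

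First I would verify that $\stab(S)$ normalizes $\pstab(S)$. Indeed, if $g\in\stab(S)$ preserves $S$ setwise and $h\in\pstab(S)$ fixes $S$ pointwise, then for any $w\in S$ we have $g^{-1}w\in S$, so $(ghg^{-1})w=g(g^{-1}w)=w$; hence $ghg^{-1}\in\pstab(S)$. Consequently
$$\stab(v)\cap\stab(S)\ \subset\ N_{\stab(v)}(\pstab(S))\ \subset\ \C_{\stab(v)}(\pstab(S)).$$
Next I would invoke Lemma~\ref{lem:quasiconvex} — which is applicable precisely because the hypothesis $\delta_c(G,\mathcal{E})<\infty$ and the word-hyperbolicity and quasiconvexity assumptions are in force — to conclude that $\pstab(S)$ is a quasiconvex subgroup of the word-hyperbolic group $\stab(v)$. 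Since $S$ is a big-tree, $\pstab(S)$ is infinite by definition, so Lemma~\ref{lem:commensurator} (Kapovich--Short) gives $[\,\C_{\stab(v)}(\pstab(S)):\pstab(S)\,]<\infty$. Combining this with the inclusion above yields the desired finite index, hence the finiteness of $\stab_{H_S}(v)$. As $v\in S$ was arbitrary, $H_S\curvearrowright S$ is proper.

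I do not expect any serious obstacle in this proof: the structure is essentially a one-line deduction from Lemma~\ref{lem:quasiconvex} and Lemma~\ref{lem:commensurator} once the normalization observation is made. The only point requiring a moment of care is checking that the ambient group in which one applies Kapovich--Short is $\stab(v)$ (a conjugate of a vertex group, hence word-hyperbolic) rather than $G$ itself, and that $\pstab(S)$ is treated as a subgroup of $\stab(v)$; both are exactly what Lemma~\ref{lem:quasiconvex} is set up to provide.
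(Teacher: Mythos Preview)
Your argument is correct, but it is genuinely different from the paper's. The paper does not pass through the commensurator at all. Instead, it argues by contradiction: assuming $\stab_{H_S}(v)$ is infinite, it produces an infinite strictly descending chain of finite-index subgroups $H_{n_i}=\pstab_{H_S}(B(v,n_i))$ (using that $S$ is uniformly locally finite by finite height), pulls this back under $\beta$ to a strictly descending chain $\stab_G(S)\cap\pstab_G(B(v,n_i))$, and then observes that $\delta_c(G,\mathcal E)<\infty$ forces the sequence $\pstab_G(B(v,n_i))$ to stabilize, a contradiction. So the paper uses the $\delta_c$ hypothesis \emph{directly} as a chain condition.

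Your route uses $\delta_c<\infty$ only indirectly, through Lemma~\ref{lem:quasiconvex} (to express $\pstab(S)$ as a finite intersection of edge-group conjugates and hence as a quasiconvex subgroup of $\stab(v)$), and then finishes with the Kapovich--Short commensurator bound. This is cleaner and shorter, and it makes transparent why hyperbolicity of the vertex groups matters; the paper's proof, on the other hand, is more elementary in that it avoids Lemma~\ref{lem:commensurator} entirely and isolates exactly the combinatorial content of the depth hypothesis.
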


\begin{proof}
	Let $H$ be the image of $\beta$. Pick an arbitrary vertex $v\in S$. It suffices to show $H_v=\stab_H(v)$ is finite. Let $H_i=\pstab_H(B(v,i))$, where $B(v,i)$ is the $i$-ball in $S$ centered at $v$. Since $S$ is uniformly locally finite by Lemma~\ref{lem:finite height}, $H_i$ is of finite index in $H_v$. Suppose by contradiction that $H_v$ is infinite. Then there is an infinite sequence $0<n_1<n_2<\cdots$ such that $H_{n_{i+1}}\subsetneq H_{n_i}$. Thus $\beta^{-1}(H_{n_{i+1}})\subsetneq \beta^{-1}(H_{n_i})$. Note that $\beta^{-1}(H_{n_i})=\stab_G(S)\cap\pstab_G(B(v,n_i))$. We deduce from $\delta_c(G,\mathcal{E})<\infty$ that the sequence $\{\pstab_G(B(v,n_i))\}_{i=1}^{\infty}$ stabilizes for large $i$, hence the same holds for $\{\stab_G(S)\cap \pstab_G(B(v,n_i))\}_{i=1}^{\infty}$, which leads to a contradiction.
\end{proof}

\begin{lem}
	\label{lem:virtually splits}
	Suppose each vertex group of $G$ is hyperbolic and each edge group is quasiconvex in its vertex groups. Suppose $(G,\mathcal{V})$ has finite stature,
	where $\mathcal{V}$ is the collection of vertex groups of $G$. Then there is a finite index subgroup $J\le \stab(S)$ such that $J=\pstab(S)\times J_1$ for some $J_1\le J$.
\end{lem}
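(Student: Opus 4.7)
The plan is to exploit that $H_S$, the image of $\beta\colon \stab(S) \to \Aut S$, is virtually free, and then to identify the preimage of a torsion-free finite-index subgroup of $H_S$ as an internal direct product via Bass-Serre theory.

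First I would combine the finite stature hypothesis, which implies that $\stab(S)$ (and hence $H_S$) acts cocompactly on $S$ as recorded in the discussion following Lemma~\ref{lem:finite height}, with Lemma~\ref{lem:finite depth} (which gives $\delta_c(G,\mathcal{E})<\infty$) and Lemma~\ref{lem:proper action} (which then gives that $H_S$ acts properly on $S$). Since $H_S$ acts properly and cocompactly on the tree $S$, Bass-Serre theory yields that $H_S$ is virtually free. I would then choose a torsion-free finite-index subgroup $F\le H_S$; because $F$ is torsion-free, its action on $S$ has trivial vertex and edge stabilizers and no inversions (any inversion squares to the identity and so has order $2$).

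Next, set $J := \beta^{-1}(F)$, a finite-index subgroup of $\stab(S)$ fitting into a short exact sequence $1 \to \pstab(S) \to J \to F \to 1$, whose action on $S$ is still without inversions. The key local computation is that $\stab_J(v) = \stab_J(e) = \pstab(S)$ for every vertex $v$ and every edge $e$ of $S$: the containment $\pstab(S) \le \stab_J(v)$ is immediate since $\pstab(S)$ fixes $S$ pointwise, while the reverse follows from $\beta(\stab_J(v)) \le \stab_F(v) = 1$ by freeness of the $F$-action. Consequently the Bass-Serre decomposition of $J$ corresponding to $J\acts S$ is a graph of groups over the finite quotient graph $\Gamma := F\backslash S$ in which every vertex group and every edge group equals $P := \pstab(S)$, with every edge-to-vertex inclusion being the identity map on $P$.

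Finally, the standard presentation of such a graph of groups collapses under the tree-edge identifications to $\langle P, t_1,\ldots,t_k \mid [t_i,p] = 1\ \text{for all}\ i, p\rangle \cong P \times F_k$, where $k$ is the first Betti number of $\Gamma$. Since $F$ acts freely on the simply connected tree $S$, the covering $S \to \Gamma$ yields $\pi_1(\Gamma) \cong F$, so $J \cong \pstab(S) \times F$ as an internal direct product, and the factor corresponding to $F$ provides the desired $J_1 \le J$. The main obstacle is marshaling cocompactness and properness of $H_S \acts S$ through the earlier machinery (Lemmas~\ref{lem:finite depth} and~\ref{lem:proper action}); once those are secured, the graph-of-groups splitting is routine.
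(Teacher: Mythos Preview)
Your argument has a genuine gap at the step where you pass from the graph-of-groups description to the direct product. You correctly obtain the short exact sequence $1\to P\to J\to F\to 1$ with $P=\pstab(S)$ and $F$ free, and hence $J\cong P\rtimes F$. But the claim that ``every edge-to-vertex inclusion [is] the identity map on $P$'' is not justified: for an edge of $\Gamma=F\backslash S$ outside the chosen spanning tree, one of the two edge monomorphisms is $p\mapsto t_e^{-1}pt_e$, where $t_e\in J$ carries the terminal vertex of the lifted edge back into the lifted tree. Since $t_e\notin P$, this automorphism of $P$ need not be the identity, and the HNN relations you write down as $[t_i,p]=1$ should instead read $t_ipt_i^{-1}=\sigma_i(p)$ for possibly nontrivial $\sigma_i\in\Aut(P)$. (Already with $P\cong\integers$ and $F\cong\integers$ one can get the Klein bottle group rather than $\integers^2$.) What is missing is precisely the assertion that the conjugation action $J\to\Aut(P)$ has finite image, so that a further finite-index subgroup yields the direct product.

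The paper's proof supplies this missing finiteness via the ambient CAT(0) geometry rather than pure Bass--Serre theory. It chooses a minimal $\pstab(S)$-invariant convex subcomplex $Y\subset\widetilde X_e$, shows that all $\stab(S)'$-translates of $Y$ are parallel, and assembles them into an invariant product $W=Y\times S_1$. The quotient $W/\stab(S)'$ is then a cubical fiber bundle over the finite graph $S_1/H'_S$ with compact fiber $Y/\pstab(S)$; compactness of the cubical fiber forces the holonomy group to be finite, and passing to the cover with trivial holonomy produces the desired $J=\pstab(S)\times J_1$. Your approach recovers the same extension $1\to P\to J\to F\to 1$, but without an argument of this kind it cannot upgrade the semidirect product to a direct one.
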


\begin{proof}
	Note that $\delta_c(G,\mathcal{E})<\infty$ by Lemma \ref{lem:finite depth}. By Lemma \ref{lem:proper action}, $H_S$ acts properly and cocompactly on a tree, thus $H_S$ has
	a free finite index subgroup $H'_S$. Let $\stab(S)'\le\stab(S)$ be the subgroup induced by $H'_S$. Pick an edge $e\subset S$ and let $\widetilde X_e\subset\widetilde X$ be the edge space over $e$. By Lemma \ref{lem:quasiconvex}, $\pstab(S)$ is quasiconvex in $\stab(\widetilde X_e)$. Choose a minimal $\pstab(S)$-invariant convex subcomplex of $Y\subset \widetilde X_e$ stabilized by $\pstab(S)$ (cf.\ Lemma~\ref{lem:superconvex core}).
	
	Let $g\in \stab(S)'$. We claim $gY$ and $Y$ are parallel. Note that $gY$ is invariant under $(\pstab(S))^g=\pstab(S)$. Let $Y'\subset Y$ be image of the CAT$(0)$ projection of $gY$ to $Y$. Then $Y'$ is $\pstab(S)$-invariant. By minimality, $Y=Y'$. Similarly, the image of the CAT$(0)$-projection of $Y$ to $gY$ is $gY$. Thus $gY$ and $Y$ are parallel. Note that when $ge=e$, since $H'_S$ is torsion-free,  $g\in \pstab(S)$, and hence $gY=Y$.
	
	Let $S_1\subset S$ be the convex hull of the $\stab(S)'$-image of $e$. Then $W=Y\times S_1$ embeds isometrically into $\widetilde X$ as a $\stab(S)'$-invariant convex subcomplex. Note that $W/\stab(S)'$ is a fiber-bundle over $S_1/H'_S$, and $S_1/H'_S$ is compact. The cubical structure of $W$ induces a holonomy on this fiber-bundle with finite holonomy group, and the lemma follows by passing to a finite cover with trivial holonomy group.
\end{proof}

For each vertex space $X_i$ of $X$, choose a collection of local-isometries $\{f_{ij}:X_{ij}\to X_i\}_{j=1}^{n_i}$ with compact domains. Attach the mapping cylinders of each $f_{ij}$ to $X$ to form a new graph of cube complexes $X_a$. Let $G_a=\pi_1 X_a$ with a graph of group decomposition induced from $X_a$. Note that  $X_a$ and $X$ are homotopy equivalent, and $G=G_a$. The Bass-Serre tree $T$ of $G$ naturally sits inside the Bass-Serre tree $T_a$ of $G_a$.

We need the following result which follows from standard facts about quasiconvex subgroups of hyperbolic groups, see \cite[Cor~3.22]{HuangWiseSeparability} for an explanation.
\begin{lem}
	\label{lem:finitely many conjugacy class}
	Let $\{H_1,\ldots,H_r\}$ be a collection of quasiconvex subgroups of the word-hyperbolic group $H$. Let $K$ be a quasiconvex subgroup of $H$. Then there are only finitely many $K$-conjugacy classes of subgroups of form $K\cap(\cap_{k=1}^{n}H^{g_k}_{k_i})$.
\end{lem}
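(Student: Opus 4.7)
The plan is to handle finite and infinite $L$ separately, where $L=K\cap \bigcap_{k=1}^{n} H_{k_i}^{g_k}$, and for the infinite case to pass via $H$-conjugacy. Since $K$ is quasiconvex in the word-hyperbolic group $H$, it is itself word-hyperbolic, and a hyperbolic group has only finitely many conjugacy classes of finite subgroups; this disposes of finite $L$. It remains to treat infinite $L$.

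For infinite $L$, I would first establish the intermediate statement that only finitely many $H$-conjugacy classes of such infinite intersections arise. This is a standard consequence of Lemma~\ref{lem:finite height} applied to the enlarged collection $\{K,H_1,\ldots,H_r\}$ combined with Lemma~\ref{lem:commensurator}: finite height bounds the number of essential factors after deleting redundancies, and for any infinite quasiconvex $L'$, the conjugates $H_i^g$ that contain $L'$ satisfy $g_1^{-1}g_2\in \C_H(H_i)$ whenever $L'\subseteq H_i^{g_1}\cap H_i^{g_2}$ (since $H_i^{g_1}\cap H_i^{g_2}\supseteq L'$ is infinite, forcing commensurability), so such conjugates lie in at most $[\C_H(H_i):H_i]<\infty$ cosets of $H_i$. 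An induction over the bounded depth then packages these observations into finiteness of $H$-conjugacy classes of infinite $L$.

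To refine $H$-conjugacy to $K$-conjugacy, fix a representative infinite quasiconvex $L_0\le H$ and consider subgroups of the form $L_0^h$ lying inside $K$. Setting $g=h^{-1}$, the condition $L_0^h\le K$ becomes $L_0\subseteq K^g$. If $L_0\subseteq K^{g_1}$ and $L_0\subseteq K^{g_2}$ then $K^{g_1}\cap K^{g_2}\supseteq L_0$ is infinite, so $K^{g_1}$ and $K^{g_2}$ are commensurable, whence $g_1^{-1}g_2\in \C_H(K)$; since $[\C_H(K):K]<\infty$ by Lemma~\ref{lem:commensurator}, only finitely many cosets $gK$ satisfy $L_0\subseteq K^g$, and consequently only finitely many $K$-conjugacy classes of $\{L_0^h:L_0^h\le K\}$ appear. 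Summing over the finitely many $H$-classes from the previous step completes the argument. The main obstacle is the intermediate $H$-conjugacy step: upgrading finite height to finitely many conjugacy classes of infinite intersections requires a careful induction over depth, with extra bookkeeping when several conjugates of the same $H_i$ contribute to a single intersection.
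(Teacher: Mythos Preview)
Your overall architecture is sound, but there is a recurring logical gap. Twice you assert that if two conjugates of a quasiconvex subgroup have infinite intersection then they are commensurable: once for the $H_i$'s in the $H$-conjugacy step (``$H_i^{g_1}\cap H_i^{g_2}\supseteq L'$ is infinite, forcing commensurability'') and once for $K$ in the refinement step (``$K^{g_1}\cap K^{g_2}\supseteq L_0$ is infinite, so $K^{g_1}$ and $K^{g_2}$ are commensurable''). This implication is false in general: two quasiconvex subgroups of a hyperbolic group can have infinite intersection without being commensurable (take $\langle a,b\rangle$ and $\langle a,c\rangle$ in a free group on $a,b,c$). So neither step, as written, yields $g_1^{-1}g_2\in\C_H(H_i)$ or $g_1^{-1}g_2\in\C_H(K)$.

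The repair is to replace the commensurator argument by the finite-width result of \cite{GMRS98}: for quasiconvex subgroups $A,B$ of a hyperbolic group $H$, there are only finitely many double cosets $AgB$ with $A^g\cap B$ infinite. Applied with $A=K$ and $B=L_0$, the condition $L_0\le K^g$ gives $K^g\cap L_0=L_0$ infinite, so such $g$ lie in finitely many double cosets $KgL_0$; since $L_0\le K^g$ forces $gL_0\subset gK^g=Kg$, each such double coset is a single coset $Kg$, and you get finitely many right cosets $Kh$ as needed. The same device handles the $H_i$ bookkeeping in the first step. With this correction your outline goes through. The paper itself does not prove the lemma but cites \cite[Cor~3.22]{HuangWiseSeparability}, where the argument is along these lines.
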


\begin{lem}
	\label{lem:finite stature}
	Let $G$ be as in Lemma~\ref{lem:virtually splits}. Then $(G_a,\mathcal{V}_a)$ has finite stature when $\mathcal{V}_a$ is the collection of vertex groups of $G$.
\end{lem}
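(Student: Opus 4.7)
The plan is to verify the criterion of Lemma~\ref{lem:finitely many conjugacy classes0}(3): for each vertex group $V_a \in \mathcal{V}_a$, I must show that there are finitely many $V_a$-conjugacy classes of infinite subgroups of the form $V_a \cap C_a$, where $C_a$ is an intersection of $G_a$-conjugates of elements of $\mathcal{V}_a$. Write $\mathcal{V}_a = \mathcal{V} \sqcup \mathcal{V}'$, where $\mathcal{V}$ is the original vertex groups and $\mathcal{V}' = \{\pi_1 X_{ij}\}$ the new ones (which coincide with the new edge groups). Any $C_a$ decomposes as $C_a = C \cap D$, where $C$ involves only $G$-conjugates of elements of $\mathcal{V}$ and $D$ only $G$-conjugates of elements of $\mathcal{V}'$. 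Set $L = V_a \cap C$. Observe that $V_a$ is always hyperbolic: either $V_a = \pi_1 X_i \in \mathcal{V}$ by hypothesis, or $V_a = \pi_1 X_{ij}$ is quasiconvex in $\pi_1 X_i$ via the local isometry $f_{ij}$.

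First I would show that $L$ takes only finitely many $V_a$-conjugacy classes among the infinite ones, and is quasiconvex in $V_a$. For $V_a \in \mathcal{V}$, this is immediate from finite stature of $(G, \mathcal{V})$ together with Lemma~\ref{lem:quasiconvex}. For $V_a = \pi_1 X_{ij} \subset V = \pi_1 X_i$, take representatives $L^{(1)}, \ldots, L^{(r)}$ of the finitely many $V$-conjugacy classes of infinite $V \cap C$ (each quasiconvex in $V$) and apply Lemma~\ref{lem:finitely many conjugacy class} inside hyperbolic $V$ with $K = V_a$ quasiconvex to bound the $V_a$-conjugacy classes of $V_a \cap C = V_a \cap (V \cap C)$.

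Next I would establish the key technical step: $V_a \cap h \pi_1 X_{k\ell} h^{-1}$ takes only finitely many $V_a$-conjugacy classes among the infinite ones as $h \in G$ and $(k,\ell)$ vary. Since $V_a \cap h \pi_1 X_{k\ell} h^{-1} \subseteq V_a \cap h \pi_1 X_k h^{-1}$, the previous step (applied with $C = h \pi_1 X_k h^{-1}$) gives finitely many $V_a$-conjugacy classes of $V_a \cap h \pi_1 X_k h^{-1}$. Fix one such class and, after conjugating $h$ by an element of $V_a$, arrange $V_a \cap h \pi_1 X_k h^{-1} = K$ for some fixed $K$; the remaining freedom in $h$ is right-multiplication by $\pi_1 X_k$. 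Write $h = h_0 \gamma$ with $\gamma \in \pi_1 X_k$ and set $H := h_0 \pi_1 X_k h_0^{-1}$ (hyperbolic) and $U := h_0 \pi_1 X_{k\ell} h_0^{-1}$ (quasiconvex in $H$). Then $h \pi_1 X_{k\ell} h^{-1} = \gamma' U \gamma'^{-1}$ with $\gamma' = h_0 \gamma h_0^{-1} \in H$. Since $K$ is quasiconvex in $H$ (as a pointwise stabilizer of a subtree of $T$, using Lemma~\ref{lem:quasiconvex}), Lemma~\ref{lem:finitely many conjugacy class} inside $H$ yields finitely many $K$-conjugacy classes, hence finitely many $V_a$-conjugacy classes, of $K \cap \gamma' U \gamma'^{-1} = V_a \cap h \pi_1 X_{k\ell} h^{-1}$.

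Finally let $H^{(1)}, \ldots, H^{(M)}$ be representatives of the $V_a$-conjugacy classes arising from the previous step, each quasiconvex in $V_a$. Then up to $V_a$-conjugacy we may write
\[
V_a \cap C_a \;=\; L \cap \bigcap_n \bigl(V_a \cap h_n \pi_1 X_{k_n \ell_n} h_n^{-1}\bigr) \;=\; L \cap \bigcap_n g_n H^{(j_n)} g_n^{-1}
\]
for some $g_n \in V_a$ and $j_n \in \{1, \ldots, M\}$. A final application of Lemma~\ref{lem:finitely many conjugacy class} inside the hyperbolic group $V_a$, with quasiconvex subgroup $L$ and the finite quasiconvex family $\{H^{(1)}, \ldots, H^{(M)}\}$, bounds the $L$-conjugacy classes of this intersection, hence the $V_a$-conjugacy classes of $V_a \cap C_a$. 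Combined with the first step (finitely many choices of $L$), this completes the proof. The main obstacle is the third paragraph, where one must exploit the graph-of-groups structure to re-express $V_a \cap h \pi_1 X_{k\ell} h^{-1}$ as an intersection inside a single hyperbolic vertex group so that Lemma~\ref{lem:finitely many conjugacy class} applies; everything else is a systematic chaining of quasiconvex intersection results inside the hyperbolic vertex groups.
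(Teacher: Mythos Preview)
Your approach is genuinely different from the paper's: the paper works in the Bass--Serre tree $T_a$, showing directly that big-trees $S_a\subset T_a$ have cocompact stabilizer and fall into finitely many $G_a$-orbits, and it uses Lemma~\ref{lem:virtually splits} in an essential way. You instead try to bound conjugacy classes of intersections inside the hyperbolic vertex groups by repeated applications of Lemma~\ref{lem:finitely many conjugacy class}. The second route is attractive, but as written it has a real gap at exactly the point you flag as the ``main obstacle''.

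The problem is the sentence ``the remaining freedom in $h$ is right-multiplication by $\pi_1 X_k$''. Fixing the $V_a$-conjugacy class of $K=V_a\cap h\pi_1 X_k h^{-1}$ and then conjugating by an element of $V_a$ to make this intersection \emph{equal} to $K$ does not force $h\pi_1 X_k h^{-1}$ itself to equal some fixed conjugate $h_0\pi_1 X_k h_0^{-1}$. Many distinct $G$-conjugates of $\pi_1 X_k$ can intersect $V_a$ in the same subgroup $K$; in tree language, many vertices $w$ in the big-tree determined by $K$ can satisfy $\pstab(\{v,w\})=K$. So you cannot write $h=h_0\gamma$ with $\gamma\in\pi_1 X_k$, and hence you cannot place $h\pi_1 X_{k\ell}h^{-1}$ inside a single fixed hyperbolic group $H=h_0\pi_1 X_k h_0^{-1}$ to run Lemma~\ref{lem:finitely many conjugacy class}. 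What you actually need is that there are only finitely many $V_a$-orbits of conjugates $h\pi_1 X_k h^{-1}$ with $V_a\cap h\pi_1 X_k h^{-1}$ infinite. This is true, but it is not a formal consequence of ``finitely many $V_a$-conjugacy classes of the intersections''; it amounts to cocompactness of $\stab(S)\curvearrowright S$ for the relevant big-tree $S\subset T$, which is precisely what the paper extracts from Lemma~\ref{lem:virtually splits} (the factor $J_1$ commutes with $\pstab(S)$, hence permutes and in fact stabilizes the augmented big-trees, giving the needed cocompactness). Once you insert that step---or, equivalently, argue that within a fixed big-tree $S$ there are finitely many $\stab(S)\cap V_a$-orbits of vertices of type $k$---your reduction to Lemma~\ref{lem:finitely many conjugacy class} goes through; without it, Step~2 does not stand.
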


\begin{proof}
	Let $S_a\subset T_a$ be a big-tree and let $S=S_a\cap T$. Then $S$ is either one point, or $S$ is a big-tree of $T$. Note that $S_a$ is uniformly locally finite by Lemma~\ref{lem:finite height}. Moreover, $S_a$ is contained in the 1-neighborhood of $S$. We must show:
	\begin{enumerate}
		\item $\stab(S_a)\acts S_a$ is cocompact;
		\item there are finitely many $G_a$-orbits of big-trees of $T_a$.
	\end{enumerate}
	
	Let $J_1$ and $J$ be the groups in Lemma \ref{lem:virtually splits}. Since $\stab(S)\acts S$ is cocompact, the actions $J\acts S$ and $J_1\acts S$ are cocompact. Since each element in $J_1$ commutes with each element in $\pstab(S_a)\le \pstab(S)$, we have $\pstab(jS_a)=\pstab(S_a)$ for any $j\in J_1$. It follows from the definition of big-tree that $jS_a=S_a$ for any $i\in j_1$. Hence $J_1\acts S_a$ is cocompact. In particular, $\stab(S_a)\acts S_a$ is cocompact.
	
	For (2), since we already know there are finitely many $G$-orbits of big-trees of $T$, it suffices to show there are finitely many $G_a$-orbits of big-trees of $T_a$ such that their intersection with $T$ is exactly $S$. Let $K\subset S$ be a finite subtree such that $J_1K$ covers $S$ and $\pstab(K)=\pstab(S)$ (this is possible by Lemma~\ref{lem:finite depth}). Pick a big-tree $S'_a$ of $T_a$ such that $T\cap S'_a=S$. Let $K'_a$ be the union of $K$ together with all the edges of $S'_a$ that intersect $K$. We claim $\pstab(S'_a)=\pstab(K'_a)$. By a similar argument as before, $J_1$ stabilizes $S'_a$. Thus $J_1 K'_a$ covers $S'_a$. Moreover, for each $j\in J_1$, since $j$ commutes with each element in $\pstab(K'_a)\le \pstab(K)=\pstab(S)$, 
	we have $\pstab(K'_a)=\pstab(jK'_a)$. Thus the claim follows. We also observe that there are only finitely many $\pstab(K)$-conjugacy of groups of form $\pstab(K')$, where $K'$ is a finite subtree of $T_a$ such that $K'\cap T=K$ (to see this, first apply Lemma~\ref{lem:finitely many conjugacy class} with $H=\stab(v)$ for each vertex $v\in K$ to obtain a collection of quasiconvex subgroups of $\pstab(K)$ which fall into finitely many $\pstab(K)$-conjugacy classes, then apply Lemma~\ref{lem:finitely many conjugacy class} with $H=\pstab(K)$). Thus (2) follows.
\end{proof}

\section{The depth reducing quotient}
\label{subsec_quotient}
Let $G$ be the fundamental group of a finite graph of groups with the underlying graph $\G$. Let $\{V_i\}_{i=1}^{n}$ be the collection of vertex groups of $G$. Suppose there is an edge $E$ between $V_i$ and $V_j$ (we allow $i=j$). Then $E$ induces an isomorphism $\alpha_E:E_i\to E_j$ from a subgroup of $V_i$ to a subgroup of $V_j$. Note that $\alpha_E$ is a transfer isomorphism discussed after Definition~\ref{def:big-tree}.

Define the notion of \emph{quotient of graphs of groups} as follows: Let $\{q_i:V_i\to \bar{V}_i\}_{i=1}^n$ be a collection of quotient maps. They are \emph{compatible} if for any edge $E$ between $V_i$ and $V_j$, we have $\alpha_E(E_i\cap \ker q_i)=E_j\cap \ker q_j$. In this case, $\alpha_E$ descends to an isomorphism $\bar{\alpha}_E:\bar{E}_i\to\bar{E}_j$, where $\bar{E}_i=q_i(E_i)$. We define a new graph of groups with the same underlying graph $\G$, vertex groups the $\bar{V}_i$'s, and edges groups as well as boundary morphisms induced by the $\alpha_E$'s. Let $\bar{G}$ be the fundamental group of this new graph of groups. Then there is a quotient homomorphism $G\to \bar{G}$,  sending vertex groups (resp. edge groups) of $G$ to vertex groups (resp. edge groups) of $\bar{G}$. The following result  enables us to make such a quotient of $G$ with decreasing depth. See \cite[Prop~4.2]{HuangWiseSeparability}.
\begin{prop}
	\label{prop:quotients}
Let $G$ be the fundamental group of a finite graph of groups and let $\mathcal{V}$ and $\mathcal{E}$ be the collection of vertex groups and edge groups of $G$. Suppose
\begin{enumerate}
	\item each vertex group is word-hyperbolic and virtually compact special;
	\item each edge group is quasiconvex in its vertex groups;
	\item $G$ has finite stature, and $\delta(G,\mathcal{E})>0$ (note that $\delta(G,\mathcal{E})<\infty$ by Lemma~\ref{lem:finite depth}).
\end{enumerate}
Then there is a collection of quotient homomorphisms $\{\phi_V:V\to\bar{V}\}_{V\in \mathcal{V}}$ such that
\begin{enumerate}
	\item \label{conclusion10} $\bar{V}=V/\nclose{\{L^V_i\}}$, where each $L^V_i$ is a finite index subgroup of a lowest transection of $G$ in  $V$, and the collection varies over representatives of all such lowest transections; moreover, we can assume each $L^V_i$ is contained in a given finite index subgroup of its associated lowest transection;
	\item \label{conclusion20} for any edge group $E\to V$, the subgroup $\ker(E\to\bar{V})$ is generated by $V$-conjugates of $\{L^V_i\}$ that are contained in $E$;
	\item \label{conclusion2} the collection $\{\phi_V:V\to\bar{V}\}_{V\in \mathcal{V}}$ is compatible, hence there is a quotient of graphs of groups $\phi:G\to\bar{G}$ as above;
	\item \label{conclusion1} $\bar{V}$ is word-hyperbolic and virtually compact special for each $V$;
	\item \label{conclusion3} each edge group of $\bar{G}$ is quasiconvex in the corresponding vertex groups;
	\item \label{conclusion4} $(\bar{G},\bar{\mathcal{V}})$ has finite stature;
	\item \label{conclusion5} $\delta(\bar{G},\bar{\mathcal{E}})<\delta(G,\mathcal{E})$.
\end{enumerate}
\end{prop}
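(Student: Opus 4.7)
The plan is to construct $\bar G$ as a graph-of-groups Dehn filling that kills deep finite-index subgroups of the lowest transections at each vertex group, and then verify the seven conclusions using the Dehn filling theorem of Agol--Groves--Manning (together with the virtually special refinement via cubical small cancellation) plus the big-tree bookkeeping developed in Section~\ref{subsec:big-tree}.

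First, I would enumerate the filling data. Finite stature of $G$ and Lemma~\ref{lem:finitely many conjugacy classes0} give that $\Upsilon$ is finite, so each $\Upsilon_V$ contains only finitely many lowest transections $L^V_1,\dots,L^V_{r_V}$, each quasiconvex in $V$ by Lemma~\ref{lem:quasiconvex}. Since transfer isomorphisms identify (up to conjugacy in their vertex groups) the copies of a single lowest transection living in different edge groups along a big-tree, the set of pairs $(V,L^V_i)$ is organized into finitely many ``transfer-orbits''. For a prescribed finite-index subgroup of each $L^V_i$, I would pick, inside each transfer-orbit simultaneously, a deep finite-index subgroup contained in that prescribed subgroup and preserved by every transfer isomorphism between members of the orbit. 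This is a finite set of conditions that can be met by pulling back the chosen finite-index subgroups along the finitely many $\alpha_E$'s and intersecting; the outcome gives (\ref{conclusion10}) and, because the kernels inside each edge group are generated precisely by the images of the filling subgroups sitting inside the edge group, (\ref{conclusion20}) and (\ref{conclusion2}).

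Next I would invoke the virtually-special Dehn filling theorem: for $V$ word-hyperbolic and virtually compact special and $\{L^V_i\}$ quasiconvex, any sufficiently deep filling $\bar V=V/\nclose{\{L^V_i\}}$ is again word-hyperbolic and virtually compact special, and the images of any finite family of quasiconvex subgroups of $V$ remain quasiconvex in $\bar V$. This yields (\ref{conclusion1}) directly, and applied to the edge groups yields (\ref{conclusion3}). The ``deep enough'' hypothesis is exactly what allows me to absorb the finite compatibility conditions from the previous paragraph; I would simply intersect the deepness conditions coming from the filling theorem at each $V$ with the compatibility conditions.

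Finally I would verify finite stature and depth reduction. Each lowest transection of $G$ becomes finite in $\bar G$ by construction, so every increasing chain of intersections of edge-group conjugates in $\bar G$ lifts to a chain of intersections of edge-group conjugates in $G$ with infinite bottom entry strictly above a lowest transection; this strips one level off the maximal chain length and gives (\ref{conclusion5}). For (\ref{conclusion4}), every based big-tree in the Bass-Serre tree of $\bar G$ lifts to a based big-tree in the Bass-Serre tree of $G$ whose pointwise stabilizer is infinite, and $G$-orbits of such big-trees are finite in number by hypothesis. The infinite pointwise stabilizers in $\bar G$ are transections arising as quotients of the corresponding transections in $G$; combined with quasiconvexity of the new edge groups (\ref{conclusion3}) and Lemma~\ref{lem:finitely many conjugacy classes0}, this gives $(\bar G,\bar{\mathcal V})$ finite stature.

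The main obstacle is the compatibility condition (\ref{conclusion2}): the chosen finite-index subgroups of the lowest transections must match up simultaneously under every transfer isomorphism $\alpha_E$ between every pair of edge groups containing copies of the same lowest transection, and at the same time they must be deep enough to make the filling theorem apply and to ensure every lowest transection actually dies in $\bar G$ (without accidentally killing any higher transection, which would wreck quasiconvexity control in (\ref{conclusion3}) and (\ref{conclusion5})). The rescue is finiteness: finite stature bounds the number of transfer-orbits, the depth $\delta(G,\mathcal E)$ is finite by Lemma~\ref{lem:finite depth}, and each constraint is open in the profinite topology of $V$, so a common deep choice exists.
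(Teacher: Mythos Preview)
The paper does not give its own proof of this proposition; it is quoted from the companion paper \cite{HuangWiseSeparability} (see the sentence ``See \cite[Prop~4.2]{HuangWiseSeparability}'' immediately preceding the statement). So your outline is being compared against a citation rather than an argument in this paper. That said, your high-level plan---choose compatible deep finite-index subgroups of the lowest transections, take a Dehn-filling/cubical small-cancellation quotient at each vertex group, then verify that depth drops and stature persists---is the right shape and is almost certainly the strategy of the cited proof.

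Two places in your sketch hide real work. First, conclusion~(\ref{conclusion20}) is not a formality: for an arbitrary normal closure $N=\nclose{\{L^V_i\}}$ and a quasiconvex $E\le V$, the intersection $E\cap N$ need not be generated by $V$-conjugates of the $L^V_i$ that happen to lie in $E$. This ``no accidental kernel'' statement is a genuine output of the filling theorem (it is what makes the induced map $E/\nclose{\cdots}\to \bar V$ injective on the expected quotient) and you should say which filling theorem gives it, not derive it from compatibility alone. Second, your verification of (\ref{conclusion4}) and (\ref{conclusion5}) asserts that every based big-tree in $\bar T$ lifts to one in $T$ with infinite pointwise stabilizer and that $\varphi$ respects transections up to commensurability. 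That lifting/commensurability control is precisely the content of Lemma~\ref{lem:used later}, which in this paper is also imported from \cite{HuangWiseSeparability}; it is not an immediate consequence of the filling construction, and in particular one must rule out new big-trees in $\bar T$ created by collapsing. Your final paragraph correctly identifies compatibility under the $\alpha_E$ as the main bookkeeping obstacle, but the substantive analytic obstacles are the two points above.
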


The following result describes how transections change under the above quotient $G\to \bar G$.
See \cite[Lem~4.9]{HuangWiseSeparability}.

\begin{lem}
	\label{lem:used later}
Let $\phi:G\to \bar{G}$ and $\phi_T:T\to\bar{T}$ be as above. Then
\begin{enumerate}
	\item for any subtree $S\subset T$ with $\pstab(S)$ infinite, $\phi(\pstab(S))$ is commensurable to $\pstab(\bar{S})$ for $\bar{S}=\phi_T(S)$.
	\item Pick finite subtree $\bar{S}\subset\bar{T}$ such that $\pstab(\bar{S})$ is infinite and pick a vertex $\bar{w}\subset\bar{S}$. Then for any $w\in T$ with $\phi_T(w)=\bar{w}$, there exists a subtree $S\subset T$ containing $w$ such that $\phi(\pstab(S))$ is commensurable to $\pstab(\bar{S})$ and $S$ is a lift of $\bar{S}$.
	\item Let $v\in T$ be a vertex and let $H_1$ and $H_2$ be two transections in $\stab(v)$. If $|\phi(H_1)\cap \phi(H_2)|=\infty$, then there exists $g\in \ker(\stab(v)\to\stab(\bar v))$ such that $\phi(H^g_1\cap H_2)=\phi(H_1)\cap \phi(H_2)$ up to finite index subgroups.
\end{enumerate}
\end{lem}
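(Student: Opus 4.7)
The three parts of the lemma form an interlocking package whose center of gravity is part (3); once (3) is established, parts (1) and (2) fall out with some bookkeeping about lifts of subtrees. The plan is to prove (3) first by a geometric argument in the hyperbolic vertex group, then deduce (1) by induction on the number of edges of $S$ (after reducing to the finite case via $\delta_c(G,\mathcal{E})<\infty$ from Lemma~\ref{lem:finite depth}), and finally obtain (2) as a direct corollary of (1) by lifting $\bar S$ through $w$.

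For part (3), the essential input is Proposition~\ref{prop:quotients}\eqref{conclusion10}, which describes $K := \ker(\stab(v)\to\stab(\bar v))$ as the normal closure in $\stab(v)$ of finite-index subgroups of $G$-lowest transections lying in $\stab(v)$. Since $\phi(H_1)\cap\phi(H_2)$ is an infinite subgroup of the word-hyperbolic group $\stab(\bar v)$, pick an infinite-order $\bar h$ inside it and lift to $h_i\in H_i$ with $\phi(h_i)=\bar h$, so that $k:=h_1h_2^{-1}\in K$. Using quasiconvexity of $H_1$, $H_2$, and of the lowest transections in $\stab(v)$ (Lemma~\ref{lem:quasiconvex}), together with Lemma~\ref{lem:commensurator} controlling commensurators in hyperbolic groups, one produces $g\in K$ such that $gh_1g^{-1}$ and $h_2$ share a common power inside $H_2$. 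To upgrade from a single element to a finite-index subgroup of $\phi(H_1)\cap\phi(H_2)$, apply Lemma~\ref{lem:finitely many conjugacy class} to the collection of quasiconvex subgroups of $H_2$ of the form $H_2\cap gH_1g^{-1}$ with $g\in K$: there are only finitely many $H_2$-conjugacy classes, and one of them realizes the whole image up to finite index. This alignment is the main obstacle, since the full strength of quasiconvexity and $\delta_c(G,\mathcal{E})<\infty$ is needed to control how the kernel generators interact with the transection intersection pattern.

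For part (1), reduce to finite $S$ using $\delta_c<\infty$, so that $\pstab(S)$ coincides with $\pstab(S')$ for some finite $S'\subseteq S$. The containment $\phi(\pstab(S))\leq\pstab(\bar S)$ is immediate from $\phi_T$-equivariance. For the reverse containment up to commensurability in $\bar G$, induct on $|\mathrm{Edges}(S)|$: the base case of a single edge $e$ follows from Proposition~\ref{prop:quotients}\eqref{conclusion20}, which gives $\phi(\pstab(e))=\pstab(\bar e)$; for the inductive step, write $S=S_0\cup e$ meeting at a vertex $v$ and apply part (3) with $H_1=\pstab(S_0),\, H_2=\pstab(e)$ inside $\stab(v)$. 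This yields $g\in K_v$ such that $\phi(\pstab(gS_0\cup e))$ has finite index in $\phi(\pstab(S_0))\cap\phi(\pstab(e))$, which by induction is commensurable in $\bar G$ to $\pstab(\bar S)$. Since $gS_0\cup e$ and $S=S_0\cup e$ both lift $\bar S$ and $\phi(g)=1$, their pointwise stabilizers have conjugate images in $\bar G$ (with the conjugating element trivial up to the commensurability definition's flexibility), completing the induction.

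Part (2) is a direct consequence of (1): given a finite subtree $\bar S\subseteq\bar T$ containing $\bar w$ and any $w\in\phi_T^{-1}(\bar w)$, construct a lift $S\subseteq T$ of $\bar S$ through $w$ by lifting edges successively along a spanning tree of $\bar S$ rooted at $\bar w$, choosing at each step an edge of $T$ over the relevant edge of $\bar S$ incident to the already-lifted vertex. Then $\phi_T(S)=\bar S$ and $S\ni w$, so part (1) provides the required commensurability $\phi(\pstab(S))\sim\pstab(\bar S)$.
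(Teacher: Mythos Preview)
The paper does not prove this lemma; it is quoted from \cite[Lem~4.9]{HuangWiseSeparability}, so there is no in-paper argument to compare against. Assessing your proposal on its own terms, there are genuine gaps in both (3) and (1).

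In (3), the two crucial steps are asserted rather than argued. First, ``one produces $g\in K$ such that $gh_1g^{-1}$ and $h_2$ share a common power'' is not a proof: you must explain how the description of $K=\ker(\stab(v)\to\stab(\bar v))$ as the normal closure of finite-index subgroups of lowest transections (Proposition~\ref{prop:quotients}\eqref{conclusion10}) actually manufactures such a $g$. Second, Lemma~\ref{lem:finitely many conjugacy class} gives only finitely many $H_2$-conjugacy classes among the subgroups $H_2\cap H_1^g$ for $g\in\stab(v)$; it does \emph{not} say that any of those with $g\in K$ has $\phi$-image of finite index in $\phi(H_1)\cap\phi(H_2)$. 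Knowing $\phi(KH_1\cap H_2)=\phi(H_1)\cap\phi(H_2)$ does not decompose into a statement about the individual $H_1^g\cap H_2$; the passage from the coset union $KH_1$ to a single conjugate $H_1^g$ requires a substantive argument exploiting the Dehn-filling structure of $\phi_V$, not merely finiteness of conjugacy classes.

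In (1), your induction does not close. Applying (3) with $H_1=\pstab(S_0)$ and $H_2=\pstab(e)$ yields $g\in K_v$ with $\phi\big(\pstab(S_0)^g\cap\pstab(e)\big)$ of finite index in $\phi(H_1)\cap\phi(H_2)$; that is, you control $\pstab(gS_0\cup e)$, not $\pstab(S_0\cup e)=\pstab(S)$. Your bridge (``their pointwise stabilizers have conjugate images in $\bar G$'') is unjustified: from $\phi(g)=1$ one gets $\phi(\pstab(S_0)^g)=\phi(\pstab(S_0))$, but $\phi(A^g\cap B)$ and $\phi(A\cap B)$ need not be commensurable merely because $\phi(A^g)=\phi(A)$. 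As written, your inductive step establishes a form of (2) (existence of \emph{some} lift of $\bar S$ with the commensurability property) rather than (1) (the property for the \emph{given} $S$). Since your derivation of (2) then invokes (1), the whole chain is circular until (1) is proved for an arbitrary $S$ by an independent argument.
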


\section{Small cancellation quotients}
\label{subsec:realizing B(6)}

This section is devoted to the proof of Proposition~\ref{prop:graded}, which is a main technical ingredient towards virtual specialness.

\begin{definition}
A map between cube complexes is \emph{cubical} if it is cellular,
and its restriction to each cube is a map $\sigma\to\tau$ that factors as $\sigma\to\eta\to\tau$, where the first map $\sigma\to\eta$ is a projection onto a face of $\sigma$ and the second map $\eta\to\tau$ is an isometry.
\end{definition}

Let $Y$ be a nonpositively curved cube complex. A \emph{thickening} of $Y$ is a nonpositively curved cube complex $Y'$ such that there exists a  local-isometry $i:Y\to Y'$ which induces an isomorphism of the fundamental group. Note that the thickening map $i:Y\to Y'$ is necessarily injective. Moreover, if we pass to the universal covers, the cubical projection (\cite[Lem~13.8]{HaglundWiseSpecial}) from $\widetilde Y'$ to $\widetilde Y\subset\widetilde Y'$ is $\pi_1 Y$-equivariant, and hence induces a cubical retraction map $Y'\to Y$ that is  a homotopy equivalence.

Let $f:Y\to X$ be a local-isometry between nonpositively curved cube complexes. A \emph{thickening} of $f:Y\to X$ is a local-isometry $f':Y'\to X$ 
where $f=f'\circ i$ and $i: Y\to Y'$ is a thickening of $Y$.

\begin{prop}
	\label{prop:graded}
	Let $X$ be a compact nonpositively curved cube complex that splits as a graph of nonpositively curved cube complexes. Suppose the fundamental group of each vertex space is word-hyperbolic, and $(G,\mathcal{V})$ has finite stature, where $G=\pi_1 X$ and $\mathcal{V}$ is the collection of vertex groups of $G$.
	
	Let $\{X_V\}_{V\in \mathcal{V}}$ be the vertex spaces of $X$. Then there exists a collection of quotient maps $\{\phi_V:V\to\bar V\}_{V\in\mathcal{V}}$ such that
	\begin{enumerate}
		\item $\pi_1\bar{V}$ is virtually compact special and word-hyperbolic for each $V\in\mathcal{V}$;
		\item the collection $\{\phi_V:V\to\bar V\}_{V\in\mathcal{V}}$ is compatible, hence they induce a quotient of graphs of groups $G\to\bar{G}$;
		\item each edge group of $\bar{G}$ is finite;
		\item the cover of each $X_V$ corresponding to $\ker \phi_V$ is a special cube complex;
		\item each $\phi_V$ is induced by a $C'(\frac{1}{24})$ graded small cancellation presentation $X^{\ast}_V=\langle X_V \mid \R_V\rangle$ such that $\R_V$ is finite and $\widetilde X^{\ast}_V$ has well-embedded cones; moreover, the following hold for each edge space $X_E\to X_V$ of $X_V$:
		\begin{enumerate}
			\item there is a finite regular cover $\dot X_E\to X_E$, and a relator $Z_j\in \R_V$ such that $Z_j\to X_V$ is a thickening of $\dot X_E\to X_V$;
			\item the natural map $E/\nclose{\dot E}\to \phi_V(E)$ is an isomorphism, where $E=\pi_1 X_E$ and $\dot E=\pi_1 \dot X_E$.
		\end{enumerate}
	\end{enumerate}
\end{prop}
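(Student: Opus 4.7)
The plan is to induct on the commensurable depth $\delta = \delta_c(G, \mathcal{E})$. In the base case $\delta = 0$, all edge groups are finite; I would take $\phi_V = \mathrm{id}_V$, invoke Agol's theorem to establish that each hyperbolic cubulated $V$ is virtually compact special for conclusion (1), and for each edge space $X_E \to X_V$ take $\dot X_E$ to be the finite universal cover of $X_E$ with $Z_E$ a contractible cubical thickening of $\dot X_E \to X_V$. The $C'(\frac{1}{24})$ small cancellation condition is vacuous on contractible cones, and properties (5)(a)(b) are immediate since $\dot E$ is trivial and $\phi_V(E) = E$.

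For the inductive step $\delta > 0$, I would first apply Proposition~\ref{prop:quotients} to obtain algebraic quotients $\psi_V \colon V \to V^{\flat}$ with $\delta(G^{\flat}, \mathcal{E}^{\flat}) < \delta$, where $V^{\flat} = V/\nclose{\{L^V_j\}}$ and each $L^V_j$ is finite-index in a lowest transection of $G$ in $V$. To realize $\psi_V$ cubically, I would build a grade-$1$ cone $Y_j \to X_V$ with $\pi_1 Y_j = L^V_j$ for each such lowest transection: Lemma~\ref{lem:quasiconvex} gives quasiconvexity of $L^V_j$ in the hyperbolic group $V$, and Lemma~\ref{lem:superconvex core} supplies the required compact superconvex local-isometry. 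I would then pass to finite regular covers using Lemma~\ref{lem:commensurator} together with Lemma~\ref{lem:symmetric characterization} to arrange symmetry, Lemmas~\ref{lem:stable1} and~\ref{lem:stable} for stability, and further shrink $\pi_1 Y_j$ to enlarge the systoles $\systole{Y_j^{\subpres}}$. The $C'(\frac{1}{24})$ condition at grade $1$ is then achieved since wall-pieces are bounded by Lemma~\ref{lem:bound on wall pieces}, cone-pieces are bounded via Lemma~\ref{lem:superconvex fiber-product intersection} combined with finite stature, and the systoles are arbitrarily large by construction. Next I would apply the inductive hypothesis to a compact graph of NPC cube complexes realizing $G^{\flat}$, pull the resulting cones back along $\psi_V$, thicken them, and assign them grade $\ge 2$ in the combined presentation $\R_V$. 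Well-embeddedness of the combined presentation then follows from Lemma~\ref{lem:well-embedded cones} via the grading, while the specialness claim (4) together with the structural properties (5)(a)(b) for each edge space $X_E$ follow by tracking $E$ through the iteration using Lemma~\ref{lem:used later} and invoking standard virtual specialness results for $C'(\frac{1}{24})$ cubical presentations.

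The main obstacle is producing a compact graph of NPC cube complexes whose fundamental group is $G^{\flat}$, to which the induction applies. The plan is to take for each vertex a finite regular cover of the cubical part of $X_V^*$ that is special (this exists because $V^{\flat}$ is virtually compact special by Proposition~\ref{prop:quotients}), and to patch these covers across vertices using Lemma~\ref{lem:stable} together with the compatibility of the quotient maps $\{\psi_V\}$ from Proposition~\ref{prop:quotients}, producing a consistent graph-of-spaces splitting along edge spaces. A secondary subtlety is ensuring that thickenings of higher-grade cones chosen by the inductive step remain compatible with the grade-$1$ cones, which is why stability must be propagated through the finite covers via Lemma~\ref{lem:stable}.
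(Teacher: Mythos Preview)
Your plan has a genuine gap at the recursive step, and the paper's argument is organized quite differently.

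The problem is the phrase ``apply the inductive hypothesis to a compact graph of NPC cube complexes realizing $G^{\flat}$, pull the resulting cones back along $\psi_V$.'' The map $\psi_V:V\to V^{\flat}$ is a group homomorphism, not a map of cube complexes, so there is no pullback of cones in any literal sense. The inductive hypothesis would hand you local isometries $Z^{\flat}\to X_{V^{\flat}}$ into some \emph{new} cube complex $X_{V^{\flat}}$; these are not local isometries into $X_V$, and there is no canonical way to transport them there. If instead you mean to choose fresh superconvex representatives in $X_V$ for the preimages $\psi_V^{-1}(\pi_1 Z^{\flat})$, then the $C'(\frac{1}{24})$ condition you obtained inductively (which was a statement about the geometry of $X_{V^{\flat}}$) is lost, and you must re-establish it for the combined presentation over $X_V$, including all cross-pieces between your grade-$1$ cones and the new higher-grade cones. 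You give no mechanism for this. Your proposed construction of $X_{V^{\flat}}$ as ``a finite regular cover of the cubical part of $X_V^*$'' is also not right: the cubical part of $X_V^*$ is $X_V$ itself, whose finite covers have fundamental group a finite-index subgroup of $V$, not of $V^{\flat}$; and the cubical part of $\widetilde X_V^*$ is the (typically infinite) cover of $X_V$ corresponding to $\ker\psi_V$.

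The paper avoids this entirely by never changing the base space. At the outset it fixes, for each $V$, a single stable collection $\R^V$ of superconvex local isometries into $X_V$, one for each commensurability class in $\Upsilon_V$ (not just the lowest ones). The induction then runs over the depth-reducing quotients $G=G_1\to G_2\to\cdots$, but at every stage the relators remain local isometries into the original $X_V$: one only replaces elements of $\R^V$ by carefully chosen finite covers. The point is that Lemma~\ref{lem:used later} guarantees the lowest transections of each $G_{k+1}$ are commensurable to $\varphi_k$-images of elements already present in $\Upsilon_V$, so no genuinely new relator templates are ever needed. The $C'(\frac{1}{24})$ condition, the grading, and the well-embedded cones property are maintained through a list of inductive hypotheses \ref{id1}--\ref{id7} that track stability, relator embedding, and the key diameter-versus-systole inequality $\diameter(Z)+\kappa<\frac{1}{24}\systole{Y^\subpres}$ uniformly across all stages.
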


\subsection{Setting up and the choice of relators}
\label{subsec:setting up}
Let $\Upsilon$ and $\Upsilon_V$ be the collection of transections as in Definition~\ref{defn:phi}. We add the trivial subgroup to each of these collections. Each element of $\Upsilon_V$ is quasiconvex in $V$ by Lemma~\ref{lem:quasiconvex}.

\begin{definition}[Choosing Principal Relators]
	\label{const:choice of relators}
	For each  $L\in\Upsilon_V$,  choose a convex subcomplex $\widetilde Y_L\subset \widetilde X_V$ such that:
	\begin{enumerate}
		\item $\widetilde Y_L$ is superconvex (cf.\ Lemma~\ref{lem:superconvex core});
		\item $L$ acts cocompactly on $\widetilde Y_L$; 
		\item if $L$ is a finite index subgroup of a conjugate of $E$ in $V$ for some edge group $E\to V$, then there is a lift $\widetilde{X}_E\to \widetilde{X}_V$ with $\widetilde{X}_E\subset\widetilde{Y}_L$, where $X_E\to X_V$ is the edge space associated with $E$.
	\end{enumerate}
For each $L\subset\Upsilon_V$, we associate a locally isometric embedding $Y_L\to X_V$ where $Y_L=\widetilde Y_L/L$. Let $\R^V$ be the collection of representatives of isomorphism classes of principal components of multiple fiber-products of $\{Y_L\to X_V\}_{L\in\Upsilon_V}$. Elements in $\R^V$ are called \emph{relators}. Each element in $\R^V$ is superconvex as intersections of superconvex subspaces in $\widetilde X_V$ is superconvex.
\end{definition}

Each relator in $\R^V$ gives rise to a subgroup of $V$ by choosing a basepoint in the relator. The collection of all such subgroups is denoted by $\Upsilon^V$. 

$\R^V$ is non-empty. Actually, it follows from the definition of $\Upsilon_V$, Remark~\ref{rem:multiple fiberproduct similar}, Definition~\ref{defn:principal}, Lemma~\ref{lem:symmetric characterization} and Lemma~\ref{lem:commensurator} that there is a 1-1 correspondence between elements in $\Upsilon^V$ and commensurable classes of $\Upsilon_V$ in $V$ (each element in $\Upsilon^V$ is the minimal element in a commensurable class of $\Upsilon_V$). For the purpose of arranging Proposition~\ref{prop:graded}.(4), we assume in addition that each element in $\Upsilon^V$ is contained in a finite index normal subgroup $V'\le V$ corresponding to a special finite cover of $X_V$ (this can be arranged by first passing to appropriate finite covers of relators in $\R^V$, then symmetrize them again).

$\R^V$ has a partial order $\prec$ as follows. For  relators $Y_1,Y_2\in\R^V$, declare $Y_1\prec Y_2$ if $Y_1\to X_V$ factors through $Y_2\to X_V$. $\Upsilon^V$ also has a partial order $\prec$, where $L_1,L_2\in\Upsilon^V$ satisfies $L_1\prec L_2$ if there exists $g\in V$ such that $(L_1)^g< L_2$. These two partial orders are consistent.

By Lemma~\ref{lem:stable1}, $\R^V$ is stable. Let $Y_k,Y_i$ and $Y_j$ be as in Definition~\ref{def:stable}.\eqref{intersection}. Then there is $\kappa=\kappa(\R^V)<\infty$ independent of $Y_i$ and $Y_j$ such that $\widetilde Y_i\cap\widetilde Y_j\subset N_{\kappa}(\widetilde Y_k)$ where $\widetilde Y_i,\widetilde Y_j$ and $\widetilde Y_k$ are suitable copies of universal covers of $Y_i,Y_j$ and $Y_k$ inside $\widetilde X_V$. Later in the proof we will replace each element in $\R^V$ by a finite cover to obtain a new stable collection $\R^V_n$, however, we can take $\kappa(\R^V_n)=\kappa(\R^V)$.


\subsection{The induction hypothesis}
\label{subsubsec:induction}
Our plan is to successively replace elements in each $\R^V$ by their finite covers such that the conclusion of Proposition~\ref{prop:graded} is satisfied. This will be done inductively. Let $G=G_1$ and $\Upsilon^V=\Upsilon^V_0$. At step $k$, we will define a quotient of graphs of groups $\phi_k:G_k\to G_{k+1}$ using Proposition~\ref{prop:quotients}, and replace each group in $\{\Upsilon^V_{k-1}\}_{V\in\mathcal{V}}$ by its finite index subgroup to obtain $\{\Upsilon^V_{k}\}_{V\in\mathcal{V}}$. Suppose we have completed the first $n$ steps and obtained the following sequences.
\begin{align*}
& G=G_1\stackrel{\phi_1}{\to} G_2\stackrel{\phi_2}{\to} G_3\to\cdots\to G_n\stackrel{\phi_n}{\to} G_{n+1}\\
& \Upsilon^V=\Upsilon^V_0\to \Upsilon^V_1\to \Upsilon^V_2\to\cdots\to \Upsilon^V_{n-1}\to\Upsilon^V_n
\end{align*}

 We need several notations to describe our inductive hypothesis. For $1\le k\le n$, let $\varphi_k=\phi_{k}\circ\cdots\circ\phi_1:G_1\to G_{k+1}$. Let $\{V_k\}_{V\in\mathcal{V}}$ be the vertex groups of $G_k$. Let $\Upsilon^V_{k,<k}$ be the collection of elements in $\Upsilon^V_k$ whose $\varphi_{k-1}$-images are finite. When $k=1$,  define $\Upsilon^V_{k,<k}=\emptyset$. Let $\{L^V_{ki}\}$ be the minimal elements in $\Upsilon^V_k$ (with respect to the partial order $\prec$) whose $\varphi_{k-1}$-images are infinite (if $k=1$, then $\varphi_0$ is the identity map). It is possible that $\{L^V_{ki}\}$ is the empty collection for some $V\in\mathcal{V}$). Let $\Upsilon^V_{k,\le k}=\Upsilon^V_{k,<k}\cup \{L^V_{ki}\}$. Note that both $\Upsilon^V_{k,\le k}$ and $\Upsilon^V_{k,< k}$ are ideals of $\Upsilon^V_k$ with respect to $\prec$. Recall that an \emph{ideal} of a poset $P$ is a subset $I\subset P$ such that $y\in I$ whenever $y\prec x$ for some $x\in I$. Let $\R^V_k$ be the collection of finite covers of elements in $\R^V$ induced by $\Upsilon^V_k$. We define $\R^V_{k,<k}$ and $\R^V_{k,\le k}$ similarly.

Suppose that for each $V\in\mathcal{V}$,
\begin{enumerate}[label=(\alph*)]
	\item \label{id1} $\R^V_n$ is stable, and each element of $\R^V_n$ is a finite cover of an element of $\R^V$;
	\item \label{id2} $V_{n+1}=V_1/\nclose{\Upsilon^V_{n,\le n}}$;
	\item \label{id3} there is a grading of elements in $\R^V_{n,\le n}$ such that $\langle X_V\mid \R^V_{n,\le n}\rangle$ is a $C'(\frac{1}{24})$ graded metric small cancellation presentation;
	\item \label{id4} for $Y,Z\in \R^V_{n,\le n}$ with $Z\prec Y$, we have $\diameter(Z)+\kappa<\frac{1}{24}\systole{Y^\subpres}$ where $\kappa$ is defined at the end of Section~\ref{subsec:setting up};
	\item \label{id5} $\R^V_n$ has the relator embedding property (Definition~\ref{def:relator embedding property}) relative to $\R^V_{n,< n}$;
	\item \label{id6} $G_{n+1}$ satisfies the assumptions of Proposition~\ref{prop:quotients}, and $\delta(G_{n+1},\mathcal{E}_{n+1})\le \delta(G_1,\mathcal{E}_1)-n$, where $\mathcal{E}_i$ is the collection of edges groups of $G_i$;
	\item \label{id7} Lemma~\ref{lem:used later} holds for the quotient $\varphi_n:G_1\to G_{n+1}$.
\end{enumerate}

\begin{definition}
	\label{def:relator embedding property}
A subset $\R'$ of $\R^V_n$ has the \emph{relator embedding property} relative to a subset $\R''\subset\R^V_n$ if for any $Y_2\in\R'$ and $Y_1\in\R''$ such that $Y_1\to X_V$ factors through $Y_2\to X_V$ via $Y_1\to Y_2\to X_V$, we have that $Y_1\to Y_2$ is an embedding.
\end{definition}





We now define elements in $\Upsilon^V_{n+1}$ and $\phi_{n+1}:G_{n+1}\to G_{n+2}$. 

\subsection{Elements with finite $\varphi_n$-images}
\label{subsec:Pi}

\begin{lem}
	\label{lem:liftable shells}
Let $Y\in \R^V_n\smallsetminus \R^V_{n,\le n}$. Then $Y^{\ast}=\langle Y\mid Y\otimes_{X_V} \R^V_{n,\le n}\rangle\to \langle X_V\mid \R^V_{n,\le n}\rangle$ has liftable shells.
\end{lem}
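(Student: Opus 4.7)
The plan is to verify the hypothesis of Lemma~\ref{lem:liftable shell criterion} for the natural map $Y^{\ast}\to\langle X_V\mid\R^V_{n,\le n}\rangle$. Concretely, for each $Z\in\R^V_{n,\le n}$ and each component $K$ of $Y\otimes_{X_V}Z$, I must show either that $K$ maps isomorphically to $Z$, or that $\diameter(K)\le\frac12\systole{Z^\subpres}$ together with $\pi_1 K^{\ast}=1$, where $K^{\ast}$ is the induced presentation $K\to Z^\subpres$.

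First I would exploit the stability of $\R^V_n$ (inductive hypothesis~\ref{id1}) via Definition~\ref{def:stable}\eqref{intersection}, which gives three cases. The case that $K\to Y$ is an isomorphism is impossible: composing this iso with $K\to Z\to X_V$ shows $Y\to X_V$ factors through $Z\to X_V$, i.e.\ $Y\prec Z$, and since $\R^V_{n,\le n}$ is a $\prec$-ideal containing $Z$ this puts $Y$ in $\R^V_{n,\le n}$, contradicting $Y\in\R^V_n\smallsetminus\R^V_{n,\le n}$. If $K\to Z$ is an isomorphism, the first alternative of the criterion is satisfied. Otherwise there exists $Y_k\in\R^V_n$ with $Y_k\to X_V$ factoring through $K\to X_V$ and $[\pi_1 K:\pi_1 Y_k]<\infty$; the same ideal argument places $Y_k\in\R^V_{n,\le n}$, and the remark after Definition~\ref{def:stable} lets us assume $[\pi_1 Z:\pi_1 Y_k]=\infty$, so $Y_k$ has grade strictly less than $\grade(Z)$ and appears as a cone of $Z^\subpres$.

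For the diameter bound, appropriate lifts satisfy $\widetilde K\subset\widetilde Y\cap\widetilde Z\subset N_{\kappa}(\widetilde Y_k)$ for the constant $\kappa=\kappa(\R^V_n)=\kappa(\R^V)$ from Section~\ref{subsec:setting up}. Since $\pi_1 Y_k\le\pi_1 K$ has finite index, it acts cocompactly on $\widetilde K$, yielding $\diameter(K)\le\diameter(Y_k)+\kappa$. By inductive hypothesis~\ref{id4} applied with $Y_k\prec Z$, this is strictly less than $\frac{1}{24}\systole{Z^\subpres}<\frac12\systole{Z^\subpres}$, so in particular $K$ lifts to $\widetilde Z^\subpres$.

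Finally I would argue $\pi_1 K^{\ast}=1$. The relator embedding property (inductive hypothesis~\ref{id5}) gives that $Y_k\to Z$ is an embedding, so $\widetilde Y_k$ sits convexly inside $\widetilde Z$ and is capped off by a genuine cone in $\widetilde Z^\subpres$; the lift of $K$ lies within the union of a $\kappa$-neighborhood of $\widetilde Y_k$ with this cone, a contractible region. Loops in $K$ therefore bound disks in $\widetilde Z^\subpres$ that are cellularly filled by relators of $K^{\ast}$: the principal cone is the component of $K\otimes_{X_V}Y_k$ corresponding to the inclusion $\pi_1 Y_k\hookrightarrow\pi_1 K$ (which realizes a copy of $Y_k$), supplemented by cones from $\pi_1 K$-translates of $\widetilde Y_k$ meeting $\widetilde K$. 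I expect the main obstacle to be precisely this contractibility step — showing that a $\kappa$-neighborhood of $\widetilde Y_k$ together with its cone is simply connected inside $\widetilde Z^\subpres$ — for which I would combine the $C'(\frac{1}{24})$ condition with Lemma~\ref{lem:well-embedded cones} and the wall-piece bound of Lemma~\ref{lem:bound on wall pieces} to rule out interference from neighboring cones.
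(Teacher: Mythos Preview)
Your overall strategy (verify Lemma~\ref{lem:liftable shell criterion} via stability of $\R^V_n$) matches the paper's, and your treatment of the three cases from Definition~\ref{def:stable}\eqref{intersection} and the diameter bound is essentially correct.  One small correction: since $Y_k\prec Z$ strictly and the elements of $\R^V_{n,\le n}\smallsetminus\R^V_{n,<n}$ are the \emph{minimal} ones with infinite $\varphi_{n-1}$-image, you actually get $Y_k\in\R^V_{n,<n}$, which is what hypothesis~\ref{id5} requires.

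The genuine gap is in your argument for $\pi_1 K^\ast=1$.  You only invoke hypothesis~\ref{id5} to obtain that $Y_k\to Z$ is an embedding, and then attempt to show $\pi_1 K^\ast=1$ by lifting $K$ into a contractible region of $\widetilde Z^\subpres$.  This is both more complicated than necessary and does not close: even granting that $K$ lifts to $\widetilde Z^\subpres$, you would only be showing that the image of $\pi_1 K$ in $\pi_1 Z^\subpres$ is trivial, not that $\pi_1 K^\ast=1$.  Your suggested ``supplementary cones from $\pi_1 K$-translates of $\widetilde Y_k$'' do not help, because all such translates (for $g\in\pi_1 K$) lie in the single double coset $\pi_1 K\cdot 1\cdot\pi_1 Y_k=\pi_1 K$ and hence give the \emph{same} component of $K\otimes_{X_V}Y_k$; they contribute only $\nclose{\pi_1 Y_k}$ to the relations of $K^\ast$, which a priori leaves a nontrivial finite quotient.

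The missing step is that hypothesis~\ref{id5} applies equally to the pair $(Y_k,Y)$, since $Y\in\R^V_n$ and $Y_k\in\R^V_{n,<n}$.  Thus $Y_k\to Y$ (and $Y_k\to Z$) are embeddings, so the factorization $Y_k\to K\to Y$ forces $Y_k\to K$ to be an embedding.  Combined with $\widetilde K\subset N_\kappa(\widetilde Y_k)$ and superconvexity of $\widetilde Y_k$, this yields $\pi_1 Y_k=\pi_1 K$: any nontrivial $g\in\pi_1 K$ has its axis in $\widetilde K\subset N_\kappa(\widetilde Y_k)$, hence in $\widetilde Y_k$ by superconvexity, and then the embedding $Y_k\hookrightarrow K$ forces $g\in\pi_1 Y_k$.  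Once $\pi_1 Y_k=\pi_1 K$, the single cone $Y_k$ already kills all of $\pi_1 K$, giving $\pi_1 K^\ast=1$ immediately --- no contractibility argument in $\widetilde Z^\subpres$ is needed.
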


\begin{proof}
We need to verify the assumption of Lemma~\ref{lem:liftable shell criterion}.	
Let $Y_1\in \R^V_{n,\le n}$. Let $K$ be a component of $Y_1\otimes_{X_V} Y$. If $K$ is not a copy of $Y_1$, then there exists $Y_2\prec Y_1$ such that $Y_2\to X_V$ factors through $K\to X_V$ and $\pi_1 Y_2$ is finite index in $\pi_1 K$. It follows from $Y_2\prec Y_1$ that $Y_2\in \R^V_{n,<n}$. Then $Y_2\to K$ is an embedding, since the compositions $Y_2\to K\to Y_1$ and $Y_2\to K\to Y$ are embeddings by inductive hypothesis~\ref{id5}. It follows that  $\pi_1 Y_2\to\pi_1 K$ is an isomorphism, hence $\pi_1 K^*=1$ where $K^*$ is defined in Lemma~\ref{lem:liftable shell criterion}. Since $K\subset N_\kappa(Y_2)$, $\diameter(K)\le \frac{1}{2}\systole{Y^\subpres_1}$ by inductive hypothesis~\ref{id4}. Now the lemma follows from Lemma~\ref{lem:liftable shell criterion}.
\end{proof}

\begin{lem}
	\label{lem:embedding}
Let $Y\in \R^V_n\smallsetminus \R^V_{n,\le n}$ and $Y^*$ be as in Lemma~\ref{lem:liftable shells}. Then $Y^*$ has a finite cover $\widehat Y^*$ whose cubical part $\widehat Y$ has the relator embedding property relative to $\R^V_{n,\le n}$. 
\end{lem}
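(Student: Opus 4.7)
The plan is to use separability of quasiconvex subgroups to produce a finite cover $\widehat Y \to Y$ in which each relator $Y_1 \in \R^V_{n,\le n}$ that factors through it embeds. The key input is that $X_V$ is virtually special (applying Agol's theorem to the compact nonpositively curved cube complex $X_V$ with word-hyperbolic fundamental group), so every quasiconvex subgroup of $\pi_1 X_V$ is separable in $\pi_1 X_V$. Since each element of $\R^V$ is superconvex in $X_V$, the subgroup $\pi_1 Y$ is quasiconvex and separable in $\pi_1 X_V$, and $\pi_1 Y$ is itself word-hyperbolic and virtually special, so its own quasiconvex subgroups are separable in $\pi_1 Y$.

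I would next enumerate the finite set $\mathcal F$ of pairs $(Y_1,\iota)$ where $Y_1 \in \R^V_{n,\le n}$ and $\iota:Y_1 \to Y$ is a local isometry factoring the given map $Y_1 \to X_V$ through $Y \to X_V$. Finiteness of $\mathcal F$ follows from finiteness of $\R^V_{n,\le n}$ and compactness of $Y \otimes_{X_V} Y_1$, since maps $Y_1 \to Y$ over $X_V$ correspond to components of $Y \otimes_{X_V} Y_1$ projecting isomorphically onto $Y_1$. For each $(Y_1,\iota) \in \mathcal F$, the non-diagonal components of $Y_1 \otimes_Y Y_1$ correspond to a finite set of double-coset representatives $g \notin \iota_\ast(\pi_1 Y_1)$ in $\pi_1 Y$; by separability of $\iota_\ast(\pi_1 Y_1)$ in $\pi_1 Y$, I produce, for each such $g$, a finite-index subgroup $H_g \le \pi_1 Y$ with $\iota_\ast(\pi_1 Y_1) \subset H_g$ and $g \notin H_g$.

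Let $H$ be the intersection of all such $H_g$ over all pairs in $\mathcal F$, replaced by its normal core if convenient, and let $\widehat Y \to Y$ be the corresponding finite cover, with $\widehat Y^\ast \to Y^\ast$ obtained by elevating each cone of $Y^\ast$. Verification of the relator embedding property is then direct: if $Y_1 \in \R^V_{n,\le n}$ factors as $Y_1 \to \widehat Y \to X_V$, composition with $\widehat Y \to Y$ yields some pair $(Y_1,\iota) \in \mathcal F$; for every associated $g$, the map $Y_1 \to Y/H_g$ is an embedding by construction, and since $\widehat Y \to Y$ factors through $Y/H_g$, the lift $Y_1 \to \widehat Y$ is also an embedding (lifts of embeddings through covering maps remain embeddings).

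The main obstacle I anticipate is ensuring the separability input is legitimately available; this reduces to the virtual specialness of $X_V$, which is automatic by Agol given the word-hyperbolic hypothesis on the vertex group, together with inheritance of QCERF to the quasiconvex subgroup $\pi_1 Y$. A minor subtlety to track is that passing to $\widehat Y$ might a priori alter the collection of relators in $\R^V_{n,\le n}$ that factor through it, but any factorization through $\widehat Y$ immediately composes to one through $Y$, so the collection only shrinks, which is harmless for the embedding property.
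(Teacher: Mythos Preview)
Your route is sound for producing a finite cover $\widehat Y\to Y$ with the relator embedding property, but it is genuinely different from the paper's and leaves one gap relative to the statement as written.

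The paper does not work inside $\pi_1 Y$. Instead it exploits the inductive structure: by hypotheses~\ref{id3},~\ref{id4} and Lemma~\ref{lem:well-embedded cones}, cones of $\widetilde X_V^*$ are embedded, so each $Z\in\R^V_{n,\le n}$ with $Z\prec Y$ already embeds in the cubical part of $\widetilde Y^*$. Then $\pi_1 Y^*\hookrightarrow V_{n+1}$ by Lemmas~\ref{lem:quasi-isom embedding} and~\ref{lem:liftable shells}, and $V_{n+1}$ is residually finite by hypothesis~\ref{id6}. A finite cover $\widehat Y^*\to Y^*$ then retains the embedding of each compact $Z$. Your argument instead invokes QCERF for $\pi_1 Y$ via Agol applied to $X_V$; this avoids the well-embedded cones input and the liftable-shells machinery entirely. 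What the paper's route buys is that the resulting cover is literally a cover of $Y^*$, which your argument does not give: your $H\le\pi_1 Y$ has no reason to contain $\ker(\pi_1 Y\to\pi_1 Y^*)$, so $\widehat Y$ need not be the cubical part of any finite cover of $Y^*$. This is harmless for the application in Lemma~\ref{lem:sym0}, which only uses $\widehat Y$ and rebuilds the presentation from scratch, but it does not prove the lemma exactly as stated; enlarging $H$ to $H\cdot\ker(\pi_1 Y\to\pi_1 Y^*)$ does not obviously preserve the embedding property.

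Two smaller points. First, excluding a single $g$ from $H_g$ does not make $Y_1\to Y/H_g$ an embedding; you need the intersection over all bad $g$'s for the fixed pair $(Y_1,\iota)$ before the embedding conclusion holds. Second, a given $\iota:Y_1\to Y$ may have several lifts to $\widehat Y$, and you need all of them to embed; this is exactly what normality of $H$ provides via deck transformations, so passing to the normal core is essential, not merely convenient.
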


\begin{proof}
By inductive hypotheses~\ref{id3}~and~\ref{id4} and Lemma~\ref{lem:well-embedded cones}, cones of $\widetilde X_V^*$ are embedded where $\widetilde X^*_V$ is the universal cover of $X^*_V=\langle X_V\mid \R^V_{n,\le n}\rangle$. It follows that for any $Z\prec Y$ with $Z\in \R^V_{n,\le n}$, the lift $Z\to \widetilde Y^*$ is an embedding (since $Z\to \widetilde Y^*\to \widetilde X_V^*$). By Lemmas~\ref{lem:quasi-isom embedding}~and~\ref{lem:liftable shells}, $\pi_1 Y^*$ is a subgroup of $V_{n+1}$. By inductive hypothesis~\ref{id6}, $V_{n+1}$ is virtually compact special. Thus $\pi_1 Y^*$ is residually finite. Hence there is a finite cover $\widehat Y^*\to Y^*$ such that 
 $Z\to \widehat Y$ is an embedding
whenever $Z\to X_V$ factors through $\widehat Y\to X_V$.
\end{proof}

\begin{lem}
	\label{lem:sym0}
We replace each $Y\in \R^V_n\smallsetminus \R^V_{n,\le n}$ by $\widehat Y$ as in Lemma~\ref{lem:embedding} (elements in $\R^V_{n,\le n}$ remain unchanged) to obtain a new collection $\widehat \R^V_n$. Let $\mathring \R^V_n$ be the principal components of $\widehat \R^V_n$. Then 
\begin{enumerate}
	\item $\R^V_{n,\le n}\subset \mathring \R^V_n$;
	\item $\mathring \R^V_n\smallsetminus \R^V_{n,\le n}$ has the relator embedding property relative to $\R^V_{n,\le n}$.\\
	Thus by inductive hypothesis~\ref{id5}, $\mathring \R^V_n$ has the relator embedding property relative to $\R^V_{n,\le n}$. 
\end{enumerate} 
\end{lem}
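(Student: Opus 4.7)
The plan is to verify the two items separately, using Lemma~\ref{lem:stable2} as the main organizational tool. By inductive hypothesis~\ref{id1}, $\R^V_n$ is stable, and the modification to $\widehat \R^V_n$ replaces each element of $\R^V_n\smallsetminus \R^V_{n,\le n}$ by a finite cover $\widehat Y$ while leaving $\R^V_{n,\le n}$ unchanged. Applying Lemma~\ref{lem:stable2} gives a 1--1 correspondence between $\mathring \R^V_n$ and $\R^V_n$ in which each element of $\mathring \R^V_n$ is a finite cover of its counterpart.

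For item (1), I claim that the principal component matched with $Y\in \R^V_{n,\le n}$ is $Y$ itself. Since $Y$ sits in $\widehat \R^V_n$ unaltered, it is a component of the trivial one-factor fiber-product, and by uniqueness in the correspondence it suffices to check that $Y$ remains principal after passage to $\widehat \R^V_n$. Symmetry of $Y$ is inherited from $\R^V_n$. For the remaining condition, let $W$ be a component of a multiple fiber-product of $\widehat \R^V_n$ and $K$ a component of $Y\otimes_{X_V} W$; let $W'$ be the analogous component in $\R^V_n$ so that $\pi_1 W\le \pi_1 W'$ is of finite index. Principality of $Y$ in $\R^V_n$ forces either $[\pi_1 Y:\pi_1 Y\cap \pi_1 W']=\infty$ (whence $[\pi_1 Y:\pi_1 K]=\infty$ as well) or $\pi_1 Y\le \pi_1 W'$. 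In the latter case, the key technical arrangement is that the finite covers produced in Lemma~\ref{lem:embedding} can additionally be chosen so that $\pi_1 Y\le \pi_1\widehat Y_{i_j}$ (up to the relevant conjugacy) whenever $Y\in \R^V_{n,\le n}$ factors through $Y_{i_j}$ in $\R^V_n$. Since only finitely many such factorings arise and $\pi_1 Y_{i_j}^*$ is residually finite (being a subgroup of the virtually compact special group $V_{n+1}$ by hypothesis~\ref{id6}), one may select $\pi_1\widehat Y_{i_j}^*$ of the form $H\cdot N$ where $H$ contains the relevant finite collection of subgroups $\pi_1 Z$ and $N$ is a normal finite-index subgroup delivering the embedding property. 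This yields $\pi_1 Y\le \pi_1 W$ and hence $K\to Y$ is an isomorphism.

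For item (2), let $\mathring Y\in \mathring \R^V_n\smallsetminus \R^V_{n,\le n}$ correspond under the 1--1 correspondence to $Y\in \R^V_n\smallsetminus \R^V_{n,\le n}$, so that $\mathring Y\to Y$ factors through $\widehat Y\to Y$. If $Z\in \R^V_{n,\le n}$ factors through $\mathring Y\to X_V$, then the composition $Z\to \mathring Y\to \widehat Y\to X_V$ exhibits $Z$ as factoring through $\widehat Y$. By the relator embedding property of $\widehat Y$ from Lemma~\ref{lem:embedding}, $Z\to \widehat Y$ is an embedding. Since $\mathring Y\to \widehat Y$ is a covering map, the preimage of the subcomplex $Z\subset \widehat Y$ is a subcomplex of $\mathring Y$, and the map $Z\to \mathring Y$ lands in this preimage as an injection onto a component, hence is an embedding.

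The main obstacle is the strengthening of Lemma~\ref{lem:embedding} needed for (1): the finite cover $\widehat Y_{i_j}^*$ must simultaneously (a)~enjoy the embedding property for all smaller relators that happen to factor through it, and (b)~actually contain the fundamental groups of each $\R^V_{n,\le n}$-relator that factors through $Y_{i_j}$ in $\R^V_n$. These two requirements are mutually consistent because the embedding requirement is a local separability condition that can be made compatible with prescribing a fixed finitely generated subgroup in the cover, using the separability of quasiconvex subgroups in the virtually compact special hyperbolic group $V_{n+1}$.
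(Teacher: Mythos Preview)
Your overall strategy is sound, but you miss a simplification that makes both items nearly immediate, and your argument for (2) leans on a claim you do not justify.

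The key observation you overlook is that the factoring of $Z\in\R^V_{n,\le n}$ through $\widehat Y$ whenever $Z\prec Y$ is \emph{automatic} from the construction in Lemma~\ref{lem:embedding}, not an additional condition to be arranged. Recall that $\widehat Y$ is the cubical part of a finite cover $\widehat Y^*\to Y^*$ where $Y^*=\langle Y\mid Y\otimes_{X_V}\R^V_{n,\le n}\rangle$. Since $Z\in\R^V_{n,\le n}$ and $Z\prec Y$, the map $Z\to Y$ realizes $Z$ as a cone of $Y^*$; hence $\pi_1 Z$ has trivial image in $\pi_1 Y^*$, so $Z\to Y^*$ lifts to any cover of $Y^*$ and in particular $Z\to Y$ factors through $\widehat Y$. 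Your $H\cdot N$ separability arrangement in (1) is therefore superfluous: the subgroup $H$ you wish to preserve already maps trivially to $\pi_1 Y^*$. With this in hand, (1) follows immediately from stability of $\R^V_n$ together with the factoring fact, which is the paper's entire argument.

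For (2), your assertion that $\mathring Y\to Y$ factors through $\widehat Y\to Y$ is true but not a formal consequence of Lemma~\ref{lem:stable2}, which only says $\mathring Y$ is a finite cover of $Y$; one must invoke principality of $\mathring Y$ against $\widehat Y$ (a one-factor fiber-product in $\widehat\R^V_n$ with $\pi_1\widehat Y$ commensurable to $\pi_1\mathring Y$) to force $\pi_1\mathring Y\le\pi_1\widehat Y$. The paper's route sidesteps this entirely. By definition, $\mathring Y$ is a component of some $\widehat Y_{i_1}\otimes_{X_V}\cdots\otimes_{X_V}\widehat Y_{i_k}$, so $\mathring Y\to X_V$ factors through each $\widehat Y_{i_j}$. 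Since $\R^V_{n,\le n}$ is an ideal and $\mathring Y\notin\R^V_{n,\le n}$, not all factors can lie in $\R^V_{n,\le n}$ (else stability of $\R^V_n$ would place the principal component back in $\R^V_{n,\le n}$). Pick a factor $\widehat Y_{i_j}\notin\R^V_{n,\le n}$; then for $Z\in\R^V_{n,\le n}$ factoring through $\mathring Y$, the composition $Z\to\mathring Y\to\widehat Y_{i_j}$ is an embedding by Lemma~\ref{lem:embedding}, hence so is $Z\to\mathring Y$.
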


\begin{proof}
Recall that each element in $\mathring \R^V_n$ is a component in the multiple fiber-product of elements in $\widehat \R^V_n$. Now (1) follows from the stability of $\R^V_n$  and the fact that for any $Z\in \R^V_{n,\le n}$ and $Y\in \R^V_n\smallsetminus \R^V_{n,\le n}$, whenever $Z\to X_V$ factors through $Y\to X_V$, then $Z\to Y$ factors through $\widehat Y\to Y$. (2) follows from Lemma~\ref{lem:embedding} and the definition of fiber-products. 
\end{proof}

The argument proving Lemma~\ref{lem:liftable shells}
shows that $\mathring Y^{\ast}=\langle \mathring Y\mid \mathring Y\otimes_{X_V} \R^V_{n,\le n}\rangle\to \langle X_V\mid \R^V_{n,\le n}\rangle$ has liftable shells for each $\mathring Y\in \mathring\R^V_n\smallsetminus \R^V_{n,\le n}$.

Let $\mathring \R^V_{n,<n+1}$ be the collection of elements in $\mathring\R^V_n$ whose fundamental groups have finite $\varphi_n$-images. By inductive hypothesis~\ref{id2}, $\R^V_{n,\le n}\subset \mathring\R^V_{n,<n+1}$. Let $\mathring\R^V_{n,<n+1}\smallsetminus \R^V_{n,\le n}=\{\mathring Y_1,\mathring Y_2,\ldots,\mathring Y_m\}$ such that for each $1\le j\le m$, $\R^V_{n,\le n}\cup \{\mathring Y_1,\mathring Y_2,\ldots,\mathring Y_j\}$ is an ideal in $\mathring\R^V_n$. For each $\mathring Y_i$, it follows from Lemma~\ref{lem:liftable shells} that $\pi_1 \mathring Y^*_i=\varphi_n(\pi_1 \mathring Y_i)$ is finite, where $\mathring Y^*_i=\langle \mathring Y_i\mid \mathring Y_i\otimes_{X_V} \R^V_{n,\le n}\rangle$. Let $\dot Y_i$ be the cubical part of the universal cover of $\mathring Y^*_i$. Assign grades to elements in $\{\dot Y_1,\dot Y_2,\ldots,\dot Y_m\}$ so that $\grade(\dot Y_1)<\cdots<\grade(\dot Y_m)$ and $\grade(\dot Y_1)$ is bigger than the grade of any element in $\R^V_{n,\le n}$.

\begin{lem}\
	\label{lem:replace1}
\begin{enumerate}
	\item \label{replace11} $V_{n+1}=V_1/\nclose{\Upsilon^V_{n,\le n}}=V_1/\nclose{\Upsilon^V_{n,\le n},\dot Y_1,\ldots,\dot Y_m}$.
	\item \label{replace12} The statements of inductive hypotheses \ref{id3}~and~\ref{id4} continue to hold with $\R^V_{n,\le n}$ replaced by $\R^V_{n,\le n}\cup\{\dot Y_1,\ldots,\dot Y_m\}$.
	\item \label{replace13} $\dot \R^V_n$ is stable where $\dot \R^V_n$ is obtained by replacing each $Y_i$ by $\dot Y_i$ in $\mathring\R^V_n$.
\end{enumerate}
\end{lem}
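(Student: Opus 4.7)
Part~\eqref{replace11} follows from the construction of $\dot Y_i$. By Lemmas~\ref{lem:quasi-isom embedding} and~\ref{lem:liftable shells}, $\pi_1\mathring Y_i^*$ injects into $V_{n+1}=V_1/\nclose{\Upsilon^V_{n,\le n}}$ as the image $\varphi_n(\pi_1\mathring Y_i)$, so $\pi_1\dot Y_i=\ker(\pi_1\mathring Y_i\to\pi_1\mathring Y_i^*)$ lies in $\ker\varphi_n=\nclose{\Upsilon^V_{n,\le n}}$. Hence the new relators are already redundant, giving the second equality in~\eqref{replace11}.

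For Part~\eqref{replace12}, the crucial new observation is that $\pi_1\dot Y_i^\subpres=1$. Since $\dot Y_i$ is the cubical part of the simply-connected $\widetilde{\mathring Y_i^*}$, the cones attached to $\dot Y_i$ inside $\widetilde{\mathring Y_i^*}$ are exactly the components of $\dot Y_i\otimes_{X_V}\R^V_{n,\le n}$, and these sit among the cones of $\dot Y_i^\subpres$; additional cones coming from lower-grade $\dot Y_j$'s only impose further relations. Hence $\systole{\dot Y_i^\subpres}=\infty$, which renders both the $C'(\frac{1}{24})$ inequality and the small-subcones bound of~\ref{id4} vacuous when $Y=\dot Y_i$. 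For old cones $Y\in\R^V_{n,\le n}$, the subpresentation $Y^\subpres$ sees no new cones (since the $\dot Y_j$'s have higher grade), so $\systole{Y^\subpres}$ is unchanged; the only new pieces of $Y$ to check are cone-pieces $\widetilde Y\cap g\widetilde{\dot Y_j}$, which coincide with $\widetilde Y\cap g\widetilde{\mathring Y_j}$ in $\widetilde X_V$ as $\dot Y_j\to X_V$ factors through $\mathring Y_j$. Stability of $\mathring\R^V_n$ furnishes $\mathring Y_k\in\mathring\R^V_n$ with $\mathring Y_k\prec Y$ and the intersection contained in $N_\kappa(\widetilde{\mathring Y_k})$; the ideal property of $\R^V_{n,\le n}\subset\mathring\R^V_n$ then forces $\mathring Y_k\in\R^V_{n,\le n}$, and inductive hypothesis~\ref{id4} delivers the required bound.

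For Part~\eqref{replace13}, Lemma~\ref{lem:stable} is not directly applicable---its condition~(2) can fail when both $\mathring Y_i$ and $\mathring Y_j$ are replaced and $\varphi_n(\pi_1\mathring Y_i)\neq 1$---so I verify the three conditions of Definition~\ref{def:stable} by hand. Symmetry of each $\dot Y_i$ follows from Lemma~\ref{lem:symmetric characterization}, since $\pi_1\dot Y_i=\pi_1\mathring Y_i\cap\ker\varphi_n$ is the intersection of a subgroup normal in $\C_{V_1}(\pi_1\mathring Y_i)$ (by symmetry of $\mathring Y_i$) with a normal subgroup of $V_1$, and so is itself normal in $\C_{V_1}(\pi_1\dot Y_i)=\C_{V_1}(\pi_1\mathring Y_i)$. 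The subgroup condition reduces to a direct index computation, using that each $\pi_1\dot Y_\ell$ has finite index in $\pi_1\mathring Y_\ell$. For the intersection condition, take a component $\dot K$ of $\dot Y_i\otimes_{X_V}\dot Y_j$ covering $\mathring K\subset\mathring Y_i\otimes_{X_V}\mathring Y_j$; stability of $\mathring\R^V_n$ supplies $\mathring Y_k$ with $[\pi_1\mathring K:\pi_1\mathring Y_k]<\infty$, and a case split on whether $\mathring Y_k\in\R^V_{n,\le n}$ shows that the corresponding $\dot Y_k\in\dot\R^V_n$ satisfies $\pi_1\dot Y_k\subset\pi_1\dot K=\pi_1\mathring K\cap\ker\varphi_n$ with finite index. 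The main obstacle throughout is the identification of cones in Part~\eqref{replace12} underwriting the triviality $\pi_1\dot Y_i^\subpres=1$; once this is in place, the rest amounts to routine bookkeeping with finite-index subgroups and the ideal structure of $\R^V_{n,\le n}$.
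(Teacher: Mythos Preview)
Your arguments for Parts~\eqref{replace11} and~\eqref{replace12} match the paper's: both hinge on $\systole{\dot Y_i^\subpres}=\infty$ and on the observation that new cone-pieces of an old relator $Y\in\R^V_{n,\le n}$ coming from $\dot Y_j$ coincide in $\widetilde X_V$ with pieces from $\mathring Y_j$, which stability controls within $N_\kappa$ of some $Z\prec Y$.

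For Part~\eqref{replace13} you diverge from the paper. The paper argues only the case $m=1$ and invokes Lemma~\ref{lem:stable} directly, then appeals to induction on $m$: at step $k$ one replaces $\mathring Y_k$ by $\dot Y_k$ in the already-stable collection $\R^V_{n,\le n}\cup\{\dot Y_1,\dots,\dot Y_{k-1}\}\cup\{\mathring Y_k,\dots\}$, and condition~(2) of Lemma~\ref{lem:stable} is satisfied because every element $\prec\mathring Y_k$ is either in $\R^V_{n,\le n}$ (and lifts to $\dot Y_k$ as a cone of $\mathring Y_k^*$) or is some $\dot Y_\ell$ with $\ell<k$ (and lifts since $\pi_1\dot Y_\ell\subset\ker\varphi_n$). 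So your observation that Lemma~\ref{lem:stable} fails when applied \emph{simultaneously} to all $\dot Y_i$ is correct, but the paper sidesteps this by stepping through the ideal one element at a time. Your alternative---verifying Definition~\ref{def:stable} by hand, using $\pi_1\dot K=\pi_1\mathring K\cap\ker\varphi_n$ whenever at least one factor is replaced, and tracking the replacement status of the $\mathring Y_k$ supplied by stability---is equally valid and makes the finite-index bookkeeping explicit. The paper's route is shorter because Lemma~\ref{lem:stable} packages that bookkeeping once and for all; your route is more self-contained but requires the case analysis you sketch (including the cases where one or both factors are unreplaced, which you do not spell out but which go through by the same mechanism).
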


\begin{proof}
For simplicity of notation, we only prove the case $m=1$. The general case follows from the same argument and induction on $m$. (1) is clear. To see (2), note that adding the relator $Y_1$ may bring more pieces to $Y\in \R^V_{n,\le n}$. However, by the same argument as in Lemma~\ref{lem:liftable shells}, if a component $K$ of $Y_1\otimes_{X_V} Y$ is not a copy of $Y$, then there exists $Z\prec Y$ such that $Z\to K$ is an embedding and $\pi_1 Z\to\pi_1 K$ is an isomorphism. Hence $K\subset N_\kappa(Y_2)$, and $K$ lifts to $\widetilde Y^{\subpres}$. Thus $|K|_{Y^\subpres}\le \frac{1}{24}\systole{Y^{\subpres}}$ by inductive hypothesis~\ref{id4}. Note that $\systole{\dot Y^{\subpres}_1}=\infty$ (since $\pi_1\dot Y^{\subpres}_1=\pi_1 \langle \dot Y_1\mid \dot Y_1\otimes_{X_V} \R^V_{n,\le n}\rangle=1$). Now (2) follows.

To prove (3), we use Lemma~\ref{lem:stable}. Let $H=\C_V(\pi_1 \dot Y_1)=\C_V(\pi_1 Y_1)$. Note that $\pi_1\dot Y_1=(\varphi_n|_{\pi_1 Y_1})^{-1}(\{1\})=\pi_1 Y_1\cap (\varphi_n|_H)^{-1}(\{1\})$, where $\{1\}$ denotes the trivial subgroup of $V_{n+1}$. As $\{1\}\trianglelefteq \varphi_n(H)$
we have $(\varphi_n|_H)^{-1}(\{1\})\trianglelefteq H$. It follows that $\pi_1\dot Y_1\trianglelefteq H$. Thus Lemma~\ref{lem:stable}.(1) is satisfied. Lemma~\ref{lem:stable}.(2) follows from the definition of $\dot Y_1$. Hence (3) follows. 
\end{proof}

Since $\dot Y^{\subpres}_i$ has trivial fundamental group, by Lemma~\ref{lem:replace1} and the argument in Lemma~\ref{lem:embedding}, $Z\to \dot Y_i$ is an embedding whenever $Z \to X_V$ factor through $Y_i\to X_V$. Now we can repeat the process in Lemma~\ref{lem:embedding} and Lemma~\ref{lem:sym0} to ensure that $\dot R^V_n$ has the relator embedding property relative to $\R^V_{n,\le n}\cup\{\dot Y_1,\ldots,\dot Y_m\}$.
\subsection{Minimal elements with infinite $\varphi_n$-images}
\label{subsec:Lambda}

Let $\{\dot L^V_{n+1,i}\}$ be the minimal elements in $\dot \Upsilon^V_n$ whose $\varphi_n$-images are infinite. Let $\dot Y^V_{n+1,i}$ be the associated relators.

\begin{lem}
	\label{lem:commensurable}
	For each lowest transection $T\in V_{n+1}$, there exists at least one element of $\{\dot L^V_{n+1,i}\}$ such that its $\varphi_n$-image is commensurable to $T$ in $V_{n+1}$. For each $\dot L^V_{n+1,i}$ the image $\varphi_n(\dot L^V_{n+1,i})$ is commensurable with a lowest transection of $V_{n+1}$.
\end{lem}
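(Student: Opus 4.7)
The plan for both assertions is to translate between based big-trees of the Bass-Serre tree $T_1$ of $G_1$ and those of the Bass-Serre tree $\bar T$ of $G_{n+1}$ via Lemma~\ref{lem:used later}, exploiting the fact that a \emph{lowest} big-tree of $\bar T$ is maximal: any big-tree of $\bar T$ containing it must equal it. For the first claim, given a lowest transection $T\subset V_{n+1}$ with lowest big-tree $\bar S\subset\bar T$, I would choose a finite subtree of $\bar S$ with pstab equal to $T$ (which exists by Lemma~\ref{lem:finite depth} applied to $G_{n+1}$), lift it to a subtree of $T_1$ by Lemma~\ref{lem:used later}(2), and extend to a big-tree $S\subset T_1$. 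By Lemma~\ref{lem:used later}(1), the transection $L=\pstab(S)$ has $\varphi_n(L)$ commensurable to $T$, hence infinite. Choosing a $\prec$-minimal $L^*\preceq L$ in $\dot\Upsilon^V_n$ whose $\varphi_n$-image remains infinite (possible because $\dot\Upsilon^V_n$ is finite) places $L^*$ in $\{\dot L^V_{n+1,i}\}$ by definition. To see $\varphi_n(L^*)\approx T$, let $S^*\supseteq S$ be the big-tree of $L^*$ and $\bar S^*$ its image in $\bar T$. Lemma~\ref{lem:used later}(1) gives $\pstab(\bar S^*)$ infinite, so $\bar S^*$ sits inside some big-tree of $\bar T$ containing $\bar S$, which by the lowest property must equal $\bar S$; hence $\bar S^*=\bar S$ and $\varphi_n(L^*)\approx T$.

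For the converse, let $L^\bullet=\dot L^V_{n+1,i}$ with big-tree $S^\bullet\subset T_1$ and image $\bar S^\bullet\subset\bar T$. Using $\delta_c(G_{n+1},\mathcal{E}_{n+1})<\infty$, I extend $\bar S^\bullet$ to a lowest big-tree $\bar S^\ell$, and set $T^\ell=\pstab(\bar S^\ell)\subseteq\pstab(\bar S^\bullet)\approx\varphi_n(L^\bullet)$. If $T^\ell\approx\varphi_n(L^\bullet)$ we are done. Otherwise I will derive a contradiction to the $\prec$-minimality of $L^\bullet$ by constructing a strictly smaller transection with infinite $\varphi_n$-image. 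Pick $v\in S^\bullet$ mapping to some $\bar v\in\bar S^\ell$; lift a finite pstab-realizing subtree of $\bar S^\ell$ through $\bar v$ to $T_1$ through $v$ via Lemma~\ref{lem:used later}(2), and extend to a big-tree $S^\ell_0\ni v$. Since $\pstab(S^\bullet),\pstab(S^\ell_0)\subset\stab(v)$ have $\varphi_n$-images whose intersection is commensurable to $T^\ell$ (infinite), Lemma~\ref{lem:used later}(3) yields $g\in\ker(\stab(v)\to\stab(\bar v))$ such that $\varphi_n(\pstab(S^\bullet)^g\cap\pstab(S^\ell_0))\approx T^\ell$. Since $gS^\bullet$ and $S^\ell_0$ both contain $v$, their union is a connected subtree whose pstab equals $\pstab(S^\bullet)^g\cap\pstab(S^\ell_0)$; extending to a big-tree $S'$ gives a transection $L'=\pstab(S')\subseteq\pstab(S^\bullet)^g$, so $L'\prec L^\bullet$ up to conjugation, while $\varphi_n(L')\approx T^\ell$ is infinite but not commensurable to $\varphi_n(L^\bullet)$. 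Thus $L'$ corresponds to an element of $\dot\Upsilon^V_n$ strictly $\prec$-less than $L^\bullet$ with infinite $\varphi_n$-image, contradicting minimality.

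The main technical delicacy is the bookkeeping: elements of $\dot\Upsilon^V_n$ are representatives of commensurability classes obtained after several finite-index adjustments, so transections produced geometrically in $T_1$ must be carefully matched to $\dot\Upsilon^V_n$-representatives and the partial order $\prec$ must be respected through these identifications. The substantive tool throughout is Lemma~\ref{lem:used later}(3): its ``alignment'' ensures that the $\varphi_n$-image of the intersection of two transections is as large as the intersection of their $\varphi_n$-images, which is what keeps the constructed $L'$ from becoming $\varphi_n$-finite.
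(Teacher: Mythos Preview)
Your proposal is correct and follows essentially the same strategy as the paper's proof: both assertions rely on inductive hypothesis~\ref{id7} (i.e.\ Lemma~\ref{lem:used later} for $\varphi_n$), the lowest property of the target transection, and the $\prec$-minimality defining $\{\dot L^V_{n+1,i}\}$. The difference is largely stylistic. For the first assertion the paper picks $L\in\dot\Upsilon^V_n$ minimal with $\varphi_n(L)\approx T$ and shows directly that any $L'\prec L$ has $\varphi_n(L')$ finite (otherwise $\varphi_n(L')$ is commensurable to a transection below the lowest $T$, forcing $\varphi_n(L')\approx T$); you instead pick $L^*$ minimal with infinite image below such an $L$ and argue geometrically via big-trees that $\varphi_n(L^*)\approx T$. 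For the second assertion the paper is more concise: it takes $L\in\dot\Upsilon^V_n$ with $\varphi_n(L)\approx T$, applies Lemma~\ref{lem:used later}(3) to obtain $h$ with $\varphi_n((\dot L^V_{n+1,i})^h\cap L)\approx T$, and concludes $(\dot L^V_{n+1,i})^h\subseteq L$ by minimality, whence $\varphi_n(\dot L^V_{n+1,i})\approx T$ directly rather than by contradiction. Your contradiction argument unwinds the same mechanism through explicit big-tree constructions. Your caveat about matching geometrically produced transections to $\dot\Upsilon^V_n$-representatives is apt; the stability of $\dot\R^V_n$ (inductive hypothesis~\ref{id1}), which you do not invoke explicitly, is what guarantees that the intersection $L'$ you build corresponds to an element of $\dot\Upsilon^V_n$ strictly $\prec L^\bullet$.
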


\begin{proof}
	By inductive hypothesis~\ref{id6}, $G_{n+1}$ has finite stature. Hence any transection of $G_{n+1}$ can be expressed as the pointwise stabilizer of a finite big-tree of $G_{n+1}$ by Lemma~\ref{lem:finite depth}. Suppose $L\in \dot\Upsilon^V_{n+1}$ is minimal among all elements such that $\varphi_n(L)$ is commensurable to $T$ in $V_{n+1}$ (by inductive hypothesis~\ref{id7}, there exists at least one such element in $\dot\Upsilon^V_{n+1}$). We claim that $\varphi_n(L')$ is finite for any $L'\in\dot\Upsilon^V_{n+1}$ and $L'\prec L$. Otherwise, by inductive hypothesis~\ref{id7}, $\varphi_n(L')$ is commensurable in $V_{n+1}$ to a transection of $G_{n+1}$. Since $\varphi_n(L)$ is commensurable to a lowest transection of $G_{n+1}$ and $\varphi_n(L')\prec \varphi_n(L)$,  we have $\varphi_n(L')$ and $\varphi_n(L)$ are commensurable in $V_{n+1}$, which contradicts the minimality of $L$.
	
	We prove the second statement. By inductive hypothesis~\ref{id7}, the image $\varphi_n(\dot L^V_{n+1,i})$ is commensurable in $V_{n+1}$ to a transection in $V_{n+1}$. Then there is a lowest transection $T\le V_{n+1}$ such that it has a finite index subgroup contained in $\varphi_n(\dot L^V_{n+1,i})$. We assume without loss of generality that $T\le \varphi_n(\dot L^V_{n+1,i})$. 
	
	By inductive hypothesis \ref{id7}, there exists $L\in \dot\Upsilon^V_n$ such that $\varphi_n(L)$ is commensurable to $T$. Since $\dot L^V_{n+1,i}$ and $L$ are finite index in some transections of $V$, by inductive hypothesis~\ref{id7} and Lemma~\ref{lem:used later}.(3), there exists $h\in V$ such that $\varphi_n((\dot L^V_{n+1,i})^h\cap L)$ is commensurable to $\varphi_n(\dot L^V_{n+1,i})\cap \varphi_n(L)$, which is commensurable to $T$. By minimality of $\dot L^V_{n+1,i}$, we have $(\dot L^V_{n+1,i})^h\cap L=(\dot L^V_{n+1,i})^h$.
	\end{proof}

Let $\dot \R^V_{n,< n+1}=\R^V_{n,\le n}\cup\{\dot Y_1,\ldots,\dot Y_m\}$. Let $\dot \R^V_{n,\le n+1}=\dot\R^V_{n,<n+1}\cup\{\dot \R^V_{n+1,i}\}$. 

By minimality of each $\dot Y^V_{n+1,i}$, it is impossible that $\dot Y^V_{n+1,i}\prec \dot Y^V_{n+1,i'}$, so we assign the same grade to each element in $\{\dot Y^V_{n+1,i}\}$ so that 
their grades are bigger than the grade of any element in $\dot \R^V_{n,< n+1}$. Then $(\dot Y^V_{n+1,i})^{\subpres}=(\dot Y^V_{n+1,i})^*=\langle \dot Y^V_{n+1,i}\mid \dot Y^V_{n+1,i}\otimes_{X_V} \dot \R^V_{n,< n+1}\rangle$. Note that $Y\in \dot \R^V_{n,< n+1}$ whenever $Y\in\dot R^V_n$ satisfies $Y\prec \dot Y^V_{n+1,i}$. Then we can argue as Lemma~\ref{lem:liftable shells} to see $(\dot Y^V_{n+1,i})^{\ast}\to \langle X_V\mid \dot \R^V_{n,< n+1}\rangle$ has liftable shells. Thus $\varphi_n(\pi_1 \dot Y^V_{n+1,i})=\pi_1 (\dot Y^V_{n+1,i})^*=\pi_1 (\dot Y^V_{n+1,i})^{\subpres}$ by Lemma~\ref{lem:quasi-isom embedding}. In particular, $\pi_1 (\dot Y^V_{n+1,i})^{\subpres}$ is residually finite. 

Pick $Z\in \dot \R^V_{n,\le n+1}$ and let $K$ be a component of $Z\otimes_{X_V} \dot Y^V_{n+1,i}$. We argue as in Lemma~\ref{lem:liftable shells} to deduce that either $K$ is isomorphic to $Z$, or there is $W\in \dot \R^V_{n,< n+1}$ such that $W\prec Z$ and $W\prec \dot Y^V_{n+1,i}$ and $W\to K$ is an embedding and $\pi_1 W\to \pi_1 K$ is an isomorphism. Thus $|K|_{(\dot Y^V_{n+1,i})^{\subpres}}\le \diameter(W)+\kappa$. Let
\begin{center}
$M=\max\{\diameter(\mathrm{wall\ pieces\ of\ }\dot \R^V_{n,\le n+1}),\ \diameter(W)  : W\in \dot \R^V_{n,< n+1}\}$.
\end{center}
By Lemma~\ref{lem:bound on wall pieces}, $M<\infty$ since the lift of each relator is superconvex in $\widetilde X_V$. 
 
\begin{definition}[Choice of $\bar Y^V_{n+1,i}$ and $\bar L^{V_{n+1}}_i$]
	\label{def:choice}
For each $\dot Y^V_{n+1,i}$, we choose a finite cover $(\check Y^V_{n+1,i})^{\subpres}\to (\dot Y^V_{n+1,i})^{\subpres}$ such that $\systole{(\check Y^V_{n+1,i})^{\subpres}}> 24 (M+\kappa)$. For each $V\in\mathcal{V}$, we choose a finite index subgroup of each lowest transection in $V_{n+1}$ to form the collection $\{\bar L^{V_{n+1}}_i\}$ such that
\begin{enumerate}
	\item each $\pi_1(\check Y^V_{n+1,i})^{\subpres}=\varphi_n(\pi_1 \check Y^V_{n+1,i})$ contains some $\bar L^{V_{n+1}}_i$ as its finite index subgroup (up to conjugacy in $V_{n+1}$);
	\item each $\bar L^{V_{n+1}}_i$ is normal in $\C_{V_{n+1}}(\bar L^{V_{n+1}}_i)$;
	\item quotienting by these $\{\bar L^{V_{n+1}}_i\}$ gives rise to a quotient of graphs of groups $\phi_{n+1}:G_{n+1}\to G_{n+2}$ satisfying Proposition~\ref{prop:quotients}.
\end{enumerate}
(1) is possible by Lemma~\ref{lem:commensurable}. (2) and (3) can be arranged because of Proposition~\ref{prop:quotients}.\eqref{conclusion10} (if (2) is not satisfied, then we can  replace $\bar L^{V_{n+1}}_i$ by its normal closure in $\C_{V_{n+1}}(\bar L^{V_{n+1}}_i)$, this does not change the quotient map $\phi_{n+1}$).

Let $\{(\bar Y^V_{n+1,i})^{\subpres}\to (\check Y^V_{n+1,i})^{\subpres}\}$ be the finite covers induced by $\{\bar L^{V_{n+1}}_i\}$. We define $\R^V_{n+1}$ by replacing each $\dot Y^V_{n+1,i}$ by $\bar Y^V_{n+1,i}$ in $\dot \R^V_n$.
\end{definition}

Now we show the inductive hypothesis in Section~\ref{subsubsec:induction} holds with the above choice of $\R^V_{n+1}$. Recall that $\dot \R^V_n$ is stable, 
so the verification of \ref{id1} is similar to the proof of Lemma~\ref{lem:replace1}.\eqref{replace13}. Here we only verify that $\pi_1 \bar Y^V_{n+1,i}\trianglelefteq\C_{V}(\pi_1 \bar Y^V_{n+1,i})=\C_{V}(\dot Y^V_{n+1,i})$. Let $H=\pi_1 \dot Y^V_{n+1,i}$ and $H_1=\C_{V}(H)$. Since $\dot \R^V_n$ is stable, $H\trianglelefteq H_1$. Suppose $\varphi_n(H)$ contains $\bar L^{V_{n+1}}_j$ as a finite index subgroup (Definition~\ref{def:choice}.(1)). Then $\pi_1 \bar Y^V_{n+1,i}=(\varphi_n|_H)^{-1}(\bar L^{V_{n+1}}_j)$. Since $\bar L^{V_{n+1}}_j\trianglelefteq\C_{V_{n+1}}(\bar L^{V_{n+1}}_j)$ by Definition~\ref{def:choice}.(2), $\bar L^{V_{n+1}}_j\trianglelefteq\varphi_n(H_1)$. Then $(\varphi_n|_{H_1})^{-1}(\bar L^{V_{n+1}}_j)\trianglelefteq H_1$. It follows that $\pi_1 \bar Y^V_{n+1,i}=(\varphi_n|_H)^{-1}(\bar L^{V_{n+1}}_j)=H\cap (\varphi_n|_{H_1})^{-1}(\bar L^{V_{n+1}}_j)$ is normal in $H_1$.

\ref{id2} is clear. \ref{id4} follows from that $\systole{\dot Y_i}=\infty$ (see Section~\ref{subsec:Pi}) and our choice of $\bar Y^V_{n+1,i}$. \ref{id5} follows from the discussion at the end of Section~\ref{subsec:Pi} (note that $\R^V_{n+1,<n+1}=\R^V_{n,\le n}\cup\{\dot Y_1,\ldots,\dot Y_m\}$). For \ref{id3}, pick $Z\in \R^V_{n+1,n+1}$, let $P$ be a piece of $Z$. We need to show $|P|_{Z^{\subpres}}<\frac{1}{24}\systole{Z}$. This is  clear if $P$ is a wall-piece. 
Suppose $P$ is a cone-piece. Then $P$ is the universal cover of a component $K$ of $Z\otimes_{X_v} Y$ for some $Y\in\R^V_{n+1,n+1}$, where $K$ is not a copy of $Z$ or $Y$. By the same argument as in Lemma~\ref{lem:replace1}.\eqref{replace12}, we deduce that $|K|_{Y^{\subpres}}< \frac{1}{24}\systole{Z}$ from \ref{id5} (when $Z=\bar Y^V_{n+1,i}$, we also use $\systole{(\bar Y^V_{n+1,i})^{\subpres}}\ge\systole{(\check Y^V_{n+1,i})^{\subpres}} > 24 (M+\kappa)$). \ref{id6} follows from Proposition~\ref{prop:quotients}, and \ref{id7} follows from Lemma~\ref{lem:used later}.

\subsection{Conclusion}
By Section~\ref{subsubsec:induction}.\ref{id6}, the induction will stop after finitely many steps and we obtain $G_{r+1}$ with finite edge groups. Let $\{\phi_k:G_k\to G_{k+1}\}_{k=1}^{r}$ and $\{\Upsilon^V_k\}_{k=1}^r$ be the sequences of quotient maps and relators produced by this process. If $\Upsilon^V_{r+1}\smallsetminus\Upsilon^V_{r,\le r}=\emptyset$, then we define $\Upsilon^V_{r+1}=\Upsilon^V_r$. If $\Upsilon^V_{r+1}\smallsetminus\Upsilon^V_{r,\le r}\neq\emptyset$, then we repeat the process in Section~\ref{subsec:Pi} to replace each element in $\Upsilon^V_{r+1}\smallsetminus\Upsilon^V_{r,\le r}$ by its appropriate finite index subgroup to define $\Upsilon^V_{r+1}$. Then there is a grading of elements in $\R^V_{r+1}$ such that $\langle X_V\mid \R^V_{r+1}\rangle$ is a $C'(\frac{1}{24})$ graded metric small cancellation presentation.

We define the map $\phi_V$ in Proposition~\ref{prop:graded} to be $\varphi_r$. Note that Proposition~\ref{prop:graded}.(4) is satisfied by our choice of $\Upsilon^V$ in Section~\ref{subsec:setting up}. By Lemma~\ref{lem:well-embedded cones}, $\widetilde X^*_V$ has well-embedded cones where $X^*_V=\langle X_V\mid \R^V_{r+1}\rangle$.

Let $X_E\to X_V$ be an edge space. Then there is a unique element $\dot X_E\in \R^V_{r+1}$ such that $\pi_1 \dot X_E$ is a finite index subgroup of $\pi_1 X_E$ (up to conjugacy in $\pi_1 X_V$). By Definition~\ref{const:choice of relators}.(3), we can replace $X_E$ by a thickening of $X_E$ such that $\dot X_E\to X_E$ is a finite cover. Let $E=\pi_1 X_E$ and $\dot E=\pi_1 \dot X_E$. Since $\dot E\trianglelefteq \C_{V}(\dot E)$, we have $\dot E\trianglelefteq E$. Conditions (5a) and (5b) of Proposition~\ref{prop:graded} follows from Lemma~\ref{lem:iso}.

\begin{lem}
	\label{lem:iso}
The map $E/\dot E\to \phi_V(E)$ is an isomorphism.
\end{lem}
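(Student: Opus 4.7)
The map $E/\dot E \to \phi_V(E)$ is well-defined and surjective, because $\dot X_E \in \R^V_{r+1}$ implies $\phi_V(\dot E) = 1$; the content of the lemma is injectivity. My plan is to form the induced cubical presentation $X_E^* = \langle X_E \mid X_E \otimes_{X_V} Y\rangle_{Y \in \R^V_{r+1}}$, and then argue in parallel that (i) $\pi_1 X_E^* \cong E/\dot E$, and (ii) $X_E^* \to X_V^*$ has liftable shells, so that by Lemma~\ref{lem:quasi-isom embedding} the induced map $\pi_1 X_E^* \hookrightarrow \pi_1 X_V^* = V_{r+1}$ is injective with image exactly $\phi_V(E)$. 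Combining (i) and (ii) yields the required isomorphism.

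For (ii), I apply Lemma~\ref{lem:liftable shell criterion} using the same argument that proved Lemma~\ref{lem:liftable shells}. For each component $K$ of $X_E \otimes_{X_V} Y$ with $Y \in \R^V_{r+1}$, either $K \to Y$ is an isomorphism, or stability of $\R^V_{r+1}$ (inductive hypothesis~\ref{id1}) produces a lower-grade relator $Z \in \R^V_{r+1}$ with $Z \to K$ an embedding and $\pi_1 Z \to \pi_1 K$ an isomorphism. The embedding step uses the relator embedding property (inductive hypothesis~\ref{id5}) applied to the compositions $Z \to K \to X_E$ and $Z \to K \to Y$. This gives $\pi_1 K^* = 1$ together with the diameter estimate $\diameter(K) \le \frac{1}{2}\systole{Y^\subpres}$ via the $\kappa$-neighborhood bound and inductive hypothesis~\ref{id4}.

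For (i), observe first that one component of $X_E \otimes_{X_V} \dot X_E$ is a copy of $\dot X_E$ mapping to $X_E$ by the given regular finite cover, so $\dot E \trianglelefteq E$ appears as a cone subgroup of $X_E^*$; hence $\pi_1 X_E^*$ is a quotient of $E/\dot E$. For the reverse inclusion, I must show that every other cone subgroup $\pi_1 K$ (for $K$ a component of $X_E \otimes_{X_V} Y$) lies in $\dot E$ up to conjugation within $E$. By Lemma~\ref{lem:superconvex fiber-product intersection} one has $\pi_1 K = E \cap g \pi_1 Y g^{-1}$; by the stability of $\R^V_{r+1}$ this intersection is commensurable with $\pi_1 Z$ for some $Z \in \R^V_{r+1}$ with $Z \prec \dot X_E$. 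The choice of $\dot X_E$ as the \emph{principal} representative for the commensurability class of $E$ in $\Upsilon^V$ together with the normality $\pi_1 \bar Y \trianglelefteq \C_V(\pi_1 \bar Y)$ that is preserved at every inductive stage (see Section~\ref{subsec:Lambda}, where this was arranged by passing to $\bar Y^V_{n+1,i}$) then forces $\pi_1 Z \subset \dot E$, and hence $\pi_1 K \subset \dot E$ up to the conjugation absorbed into the normal closure.

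The main obstacle is step (i), specifically the claim that every auxiliary cone subgroup of $X_E^*$ lies in $\dot E$; this requires genuinely using the principal role of $\dot X_E$ within $\R^V_{r+1}$ plus the commensurator-normality preserved throughout the induction in Sections~\ref{subsec:Pi}--\ref{subsec:Lambda}, rather than merely the small-cancellation geometry. Step (ii), by contrast, is essentially a transcription of Lemma~\ref{lem:liftable shells} with $X_E$ in place of a relator outside $\R^V_{n,\le n}$, since the $C'(\tfrac{1}{24})$ and embedding properties of $\R^V_{r+1}$ were already engineered in Section~\ref{subsubsec:induction}.
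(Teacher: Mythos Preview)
Your approach is genuinely different from the paper's, and it has a real gap.

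The central difficulty is that $X_E$ is \emph{not} an element of $\R^V_{r+1}$; only the finite cover $\dot X_E$ (after thickening) is. Both stability (Definition~\ref{def:stable}) and the relator embedding property (inductive hypothesis~\ref{id5}) are statements about fiber products of elements of $\R^V_{r+1}$, so neither applies directly to $X_E\otimes_{X_V}Y$. In step~(ii) you invoke hypothesis~\ref{id5} for the composition $Z\to K\to X_E$, but that hypothesis only guarantees embeddings into relators, not into $X_E$. If instead you pass to $\dot X_E\otimes_{X_V}Y$, whose components $\hat K$ finitely cover the components $K$ of $X_E\otimes_{X_V}Y$, then the argument of Lemma~\ref{lem:liftable shells} does give $\pi_1 Z\cong\pi_1\hat K$. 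But $\pi_1\hat K=\dot E\cap(\pi_1 Y)^g$ may sit as a \emph{proper} finite-index subgroup of $\pi_1 K=E\cap(\pi_1 Y)^g$. This is exactly the gap in step~(i): from ``$\pi_1 Z\subset\dot E$'' you conclude ``$\pi_1 K\subset\dot E$'', yet $\pi_1 K$ is only commensurable with $\pi_1 Z$, and the discrepancy is not absorbed by taking normal closure in $E$ (since $\dot E$ is already normal there). In fact the assertion that each such $\pi_1 K$ lies in $\dot E$ is equivalent to injectivity of $E/\dot E\to\phi_V(E)$, which is precisely the lemma; your justification for it does not stand independently.

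The paper's proof avoids the induced presentation on $X_E$ altogether. It proceeds group-theoretically along the chain $E=E_1\to E_2\to\cdots\to E_{r+1}$, using Proposition~\ref{prop:quotients}\eqref{conclusion20} to identify generators of $\ker(E_k\to E_{k+1})$ as $V_k$-conjugates of $\varphi_{k-1}(L^V_{ki})$ lying inside $E_k$, and then applying Lemma~\ref{lem:used later}(3) to lift each such conjugate back to a $V$-conjugate $(L^V_{ki})^h$ inside $E$. Minimality of $L^V_{ki}$ then forces $(L^V_{ki})^h\le\dot E$ rather than merely $\le E$. This inductive bookkeeping is what replaces your step~(i), and it is where Proposition~\ref{prop:quotients}\eqref{conclusion20} and Lemma~\ref{lem:used later}(3) are essentially consumed; neither appears in your argument.
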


\begin{proof}
Let $E_1=E$ and $E_{k+1}=\varphi_k(E_1)$. Let $\{L^V_{ki}\}$ be the collection of minimal elements in $\Upsilon^V_{r+1}$ whose $\varphi_{k-1}$-images are infinite. Then $\ker(V_k\to V_{k+1})=\nclose{\varphi_{k-1}(L^V_{ki})}_{V_k}$ by our construction. By Proposition~\ref{prop:quotients}.\eqref{conclusion20}, $\ker(E_k\to E_{k+1})$ is generated by $V_k$-conjugates of elements in $\{\varphi_{k-1}(L^V_{ki})\}$ that are contained in $E_k$. We claim that if $(\varphi_{k-1}(L^V_{ki}))^g\le E_k$ for some $g\in V_k$, then there exists $h\in V$ such that $\varphi_{k-1}((L^V_{ki})^h)=(\varphi_{k-1}(L^V_{ki}))^g$ and $(L^V_{ki})^h\le \dot E$. Indeed, as $(\varphi_{k-1}(L^V_{ki}))^g=\varphi_{k-1}((L^V_{ki})^{l})$ for some $l\in V$, we know $\varphi_{k-1}((L^V_{ki})^{l})\cap \varphi_{k-1}(E)$ is infinite. By inductive hypothesis~\ref{id6} and Lemma~\ref{lem:used later}.(3) there exists $h\in V$ such that $\phi_{k-1}(h)=\phi_{k-1}(l)$ and $\varphi_{k-1}((L^V_{ki})^h\cap E)=(\varphi_{k-1}(L^V_{ki}))^g\cap \varphi_{k-1}(E)=(\varphi_{k-1}(L^V_{ki}))^g\cap E_k=(\varphi_{k-1}(L^V_{ki}))^g$ up to finite index subgroups. In particular, $\phi_{k-1}((L^V_{ki})^h\cap E)$ is infinite, thus $(L^V_{ki})^h\le \dot E$ by minimality of $L^V_{ki}$. Moreover, $\varphi_{k-1}((L^V_{ki})^{h})=\varphi_{k-1}((L^V_{ki})^{l})=(\varphi_{k-1}(L^V_{ki}))^g$. Thus the claim holds. By this claim, $\ker(E_k\to E_{k+1})$ is generated by $\varphi_{k-1}$-images of $V$-conjugates of elements in $\{L^V_{ki}\}$ that are contained in $\dot E$. Hence $\ker(E \to E_{r+1})$ is generated by $V$-conjugates of elements in $\Upsilon^V_{r+1}$ that are contained in $\dot E$, which justifies the lemma.
\end{proof}

\section{Specialness and Stature}
\label{subsec_special cover}
In this section we prove Theorem~\ref{thm:main}.

\subsection{Criterion for specialness}
We collect several tools from \cite{HaglundWiseCoxeter} for verifying specialness.
The following is a consequence of \cite[Thm~3.5 and Cor~4.3]{HaglundWiseCoxeter}:
\begin{thm}\label{thm:shorter profinite criterion}
Let $G$ act on the special cube complex $C$ by cubical automorphisms such that:
	\begin{enumerate}
		\item $G$ acts properly on $C$.
		\item $G$ acts cocompactly on $C$.
		\item $\intersector_G(A,B)$ is separable for each pair of
		intersecting hyperplanes $A,B$, where $\intersector_G(A,B)$ is the collection of elements $g\in G$ with $A\cap gB\neq\emptyset$.
		\item $\stab_G(A)$ is separable for each hyperplane.
	\end{enumerate}
Then $G$ has a finite index subgroup $F$ such that 
\begin{enumerate}
	\item for each $g\in F$, if $g$ stabilizes a cube then $g$ pointwise stabilizes that cube, hence $C/F$ is a cube complex;
	\item $C/F$ is a special cube complex.
\end{enumerate}
\end{thm}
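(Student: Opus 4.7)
The approach is the standard profinite-obstruction strategy: for each type of non-specialness pathology that could arise in a quotient $C/F$, identify it with the condition that some $g \in F$ lies in a specific coset of a separable subgroup; then use cocompactness to reduce to finitely many orbit representatives and intersect the resulting finite-index subgroups. By cocompactness of $G \acts C$, there are only finitely many $G$-orbits of cubes, of hyperplanes, of pairs of intersecting hyperplanes, and of pairs of osculating hyperplanes, so each pathology type contributes only finitely many separability conditions.

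First I would produce a finite-index $F_0 \le G$ such that every $g \in F_0$ stabilizing a cube $c$ pointwise stabilizes $c$, so that $C/F_0$ inherits a cube complex structure. Properness forces cube stabilizers to be finite, and a cube stabilizer (and its pointwise-acting subgroup) is an intersection of finitely many hyperplane stabilizers (namely those of the hyperplanes meeting the cube), hence separable under the hypotheses. Thus for each orbit representative cube one can avoid the finitely many bad coset elements. Then, for the four defining conditions of specialness, I would successively shrink $F_0$ to a finite-index subgroup $F$ in which: (i) for each hyperplane $U$, the intersection $F \cap \stab_G(U)$ lies in the index-at-most-two subgroup preserving sides of $U$, using separability of $\stab_G(U)$; (ii) distinct $F$-translates of $U$ remain disjoint in the quotient, again from $\stab_G(U)$-separability; (iii) no direct self-osculation at the image of $U$ arises in $C/F$, where the bad elements form a finite union of cosets of $\stab_G(U)$ determined by the finitely many edges at $0$-cubes of the carrier; and (iv) no inter-osculation between the images of $U$ and $V$ arises, where the bad elements form a finite union of cosets of $\intersector_G(U,V)$, and this is exactly where separability of $\intersector_G(U,V)$ is needed.

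The main obstacle is the careful translation between the geometric pathologies (side-inversion, self-crossing, self-osculation, inter-osculation, non-free cube action) and set-theoretic coset conditions in $G$. For each pathology one must identify the precise ``bad'' $g \in G$ producing it, verify that it falls in a finite union of cosets of $\stab_G(U)$ or of $\intersector_G(U,V)$, and then invoke separability to find a finite-index subgroup of $G$ missing these cosets. Once these identifications are made, the conclusion follows by intersecting the finitely many resulting finite-index subgroups, one for each orbit representative of each pathology type.
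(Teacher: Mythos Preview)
The paper does not prove this theorem; it merely quotes it as a consequence of \cite[Thm~3.5 and Cor~4.3]{HaglundWiseCoxeter}. Your proof sketch is exactly the profinite-obstruction argument carried out in that reference, so in substance you are reproducing the cited proof rather than diverging from the paper.

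One imprecision to flag: your claim that a cube stabilizer is the intersection of the stabilizers of the hyperplanes through the cube is false in general (an element of $\stab(c)$ may permute those hyperplanes nontrivially). Fortunately this step is redundant. If $g\in F$ stabilizes a cube $c$ and $H$ is a hyperplane through $c$, then $gH$ is also through $c$, so $H\cap gH\neq\emptyset$; once you have arranged embedded hyperplanes in $C/F$ (your step~(ii)) this forces $gH=H$, and once you have arranged two-sidedness (your step~(i)) this forces $g\in\stab^+(H)$. Doing this for every hyperplane through $c$ shows $g$ fixes $c$ pointwise. Thus conclusion~(1) is automatic once the specialness conditions are in place, and you need not treat it separately.
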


The following appears as \cite[Thm~5.2]{HaglundWiseCoxeter}:
\begin{thm}
	\label{thm:graph of cube complexes}
	Let $X$ split as a graph of nonpositively curved cube complexes (cf.\ Definition~\ref{def:graph of cube complexes}),
	where  each vertex
	space $X_v$ and edge space $X_e$ is special
	with finitely many hyperplanes.
	Then $X$ has a finite special cover
	provided the attaching maps of edge spaces satisfy
	the following:
	\begin{enumerate}
		\item the attaching maps $X_e\rightarrow X_{\iota(e)}$ and
		$X_e \rightarrow X_{\tau(e)}$ are injective local-isometries;
		\item distinct hyperplanes of
		$X_e$ map to distinct hyperplanes of $X_{\iota(e)}$ and
		$X_{\tau(e)}$;
		\item noncrossing hyperplanes map to noncrossing hyperplanes;
		\item no hyperplane of $X_e$ extends in $X_{\iota(e)}$ to a hyperplane dual to an edge that intersects $X_e$ in a single vertex (such a hyperplane of $X_{\iota(e)}$ is said to {\em inter-osculate $X_e$}); similarly  no hyperplane of $X_{\tau(e)}$ inter-osculates $X_e$.
	\end{enumerate}
\end{thm}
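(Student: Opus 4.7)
The strategy is to verify the hypotheses of Theorem~\ref{thm:shorter profinite criterion} for the action of $G=\pi_1 X$ on the universal cover $\widetilde X$, and then to obtain the desired cover as $\widetilde X/F$ for the resulting finite-index subgroup $F$. Properness and cocompactness of the action are immediate from compactness of $X$, so the substantive tasks are (i) to show that $\widetilde X$ is itself a \emph{special} cube complex, and (ii) to show that each hyperplane stabilizer $\stab_G(A)$ and each intersector $\intersector_G(A,B)$ in $G$ is separable.

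For (i), I would first describe the hyperplanes of $X$. Every hyperplane of $X$ is either \emph{transverse} (the image of some $X_e\times\{0\}$) or \emph{vertical} (an equivalence class of hyperplanes in vertex spaces, where two are identified when they are pulled back from a common hyperplane in some $X_e$). Embedding and 2-sidedness of hyperplanes in $\widetilde X$ are automatic because $\widetilde X$ is CAT$(0)$, and likewise no hyperplane of a CAT$(0)$ cube complex self-osculates. The remaining obstruction is inter-osculation in $\widetilde X$, which must involve hyperplanes whose supports straddle an attaching interface between a vertex space and a thickened edge space. Hypothesis~(4) rules out an osculation between a transverse hyperplane $X_e\times\{0\}$ and the extension of a vertex-space hyperplane, while hypotheses~(2)--(3) combined with the specialness of each $X_v$ exclude the remaining case of two vertical hyperplanes osculating at an interface without crossing.

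For (ii), I would exploit that specialness of $X_v$ and $X_e$ places each $\pi_1 X_v$ and $\pi_1 X_e$ virtually inside a right-angled Artin group, in which hyperplane subgroups are retracts and therefore separable. The stabilizer $\stab_G(A)$ of a vertical hyperplane $A\subset\widetilde X$ naturally inherits a graph-of-groups decomposition whose vertex and edge groups are hyperplane stabilizers in the various $\pi_1 X_v$ and $\pi_1 X_e$; separability of these pieces, together with hypothesis~(2) keeping the decomposition finite and injective, yields separability of $\stab_G(A)$ in $G$. A parallel analysis using that hyperplane crossings are preserved by hypothesis~(3) handles $\intersector_G(A,B)$ for intersecting $A,B$. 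Theorem~\ref{thm:shorter profinite criterion} then supplies a finite-index $F\le G$ with $\widetilde X/F$ special, as desired.

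The main obstacle is assembling the separations into a single connected finite cover $\widehat X\to X$: separating any given pair of cosets inside one $\pi_1 X_v$ is straightforward by residual finiteness, but the finite-index subgroup chosen in $\pi_1 X_v$ must also admit lifts of the attaching maps $X_e\to X_v$ for every incident edge, so that the covers of vertex spaces glue along covers of edge spaces. This forces an iterative coordination — refining covers of vertex spaces, then of edge spaces, then again of vertex spaces — and the delicate point is to arrange that this process terminates and that hypotheses~(1)--(4) continue to hold at each refinement so that no new pathology is introduced during the bookkeeping.
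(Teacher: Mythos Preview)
This theorem is not proved in the present paper at all: it is quoted verbatim from \cite[Thm~5.2]{HaglundWiseCoxeter}, so there is no ``paper's own proof'' to compare against. That said, your proposal has a genuine gap that would prevent it from going through as written.

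First, your part~(i) is unnecessary. The universal cover $\widetilde X$ is a CAT(0) cube complex, and CAT(0) cube complexes are automatically special: hyperplanes embed, are two-sided, and neither self-osculate nor inter-osculate (for the last, if $U,V$ cross then $N(U)\cap N(V)$ is convex and isomorphic to $(U\cap V)\times[-\tfrac12,\tfrac12]^2$, so every vertex of $N(U)\cap N(V)$ lies in a square dual to both). Hypotheses~(1)--(4) are conditions on $X$, not on $\widetilde X$; they are needed so that hyperplane pathologies in $X$ are of a controlled, ``global'' nature that a finite cover can undo, not to rule out pathologies in $\widetilde X$.

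The real problem is your part~(ii). You assert that $\stab_G(A)$ is separable because it ``inherits a graph-of-groups decomposition whose vertex and edge groups are hyperplane stabilizers in the various $\pi_1X_v$ and $\pi_1X_e$'', and that separability of those pieces gives separability of $\stab_G(A)$ in $G$. This inference is invalid: knowing that certain subgroups of the $\pi_1X_v$ are separable \emph{in $\pi_1X_v$} says nothing about separability of $\stab_G(A)$ \emph{in $G$}. Indeed, establishing separability of hyperplane stabilizers in $G$ is essentially equivalent in difficulty to proving the theorem itself, so routing through Theorem~\ref{thm:shorter profinite criterion} with $C=\widetilde X$ is close to circular. (The same objection applies to your treatment of $\intersector_G(A,B)$.)

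The proof in \cite{HaglundWiseCoxeter} does not go via separability in $G$. It is a direct combinatorial construction that exploits the hypothesis ``finitely many hyperplanes'' crucially: one assigns to each vertex and edge space a finite wall-colouring (coming from its canonical completion to a Salvetti complex), and hypotheses~(1)--(4) are exactly what is needed to make these colourings consistent across attaching maps, after passing to suitable finite covers of the pieces. Your final paragraph actually gestures at the right idea --- coordinating finite covers of vertex and edge spaces --- but this is the \emph{method}, not an obstacle to a separability argument.
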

Note that condition (4) can be replaced by that the mapping cylinder of each attaching map from an edge space to a vertex space is special.

\subsection{finite stature implies specialness}
In this subsection we prove the following theorem.
\begin{thm}
	\label{thm:main2}
	Let $X$ be a graph of nonpositively curved cube complexes such that $X$ is compact. 
	Suppose the fundamental group of each vertex space is word-hyperbolic (hence each vertex space is virtually special), and $\pi_1 X$ has finite stature with respect its vertex groups. Then $X$ is virtually special.
\end{thm}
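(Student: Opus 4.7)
The plan is to apply Proposition~\ref{prop:graded} to reduce to a graph of groups with finite edge groups, lift a torsion-free finite-index subgroup of the quotient to a finite cover $X_0$ of $X$, and then verify the hypotheses of Theorem~\ref{thm:graph of cube complexes}. First apply Proposition~\ref{prop:graded} to produce quotient maps $\phi_V\colon V\to\bar V$ inducing $\phi\colon G\to\bar G$, where each $\bar V$ is virtually compact special and word-hyperbolic, each edge group of $\bar G$ is finite, the cover $\widehat X_V$ of $X_V$ corresponding to $\ker\phi_V$ is a special cube complex, and $\phi_V$ is realized by a $C'(\tfrac{1}{24})$ graded presentation $\langle X_V\mid \mathcal R_V\rangle$ whose universal cover has well-embedded cones by Lemma~\ref{lem:well-embedded cones}. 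Since $\bar G$ splits as a graph of virtually compact special word-hyperbolic groups over finite edge groups, $\bar G$ contains a torsion-free finite-index subgroup $\bar G_0$ such that $\bar G_0\cap \bar E = 1$ for every edge group $\bar E$ of $\bar G$ (such $\bar G_0$ exists by residual finiteness of the $\bar V$).

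Set $G_0 := \phi^{-1}(\bar G_0)$, a finite-index subgroup of $G$, and consider the finite cover $X_0 := G_0\backslash \widetilde X$. Then $X_0$ has the structure of a graph of nonpositively curved cube complexes whose vertex spaces are finite covers of $\widehat X_V$ (hence special), and whose edge spaces are the covers $\dot X_E$ of $X_E$ corresponding to $\dot E := \ker(\phi_V|_E)$, which inherit specialness from the local-isometric map into $\widehat X_V$. To complete the argument, verify the hypotheses of Theorem~\ref{thm:graph of cube complexes} after passing to a further finite cover of $X_0$: the attaching maps are local-isometries automatically, and injectivity together with the hyperplane conditions (distinctness, non-crossing preservation, and no inter-osculation) are arranged by successive finite-sheeted lifts. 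These lifts exploit the separability of hyperplane stabilizers and intersectors in the virtually compact special cube complexes $\widehat X_V$, combined with the well-embedded cones in $\widetilde X_V^\ast$, which imply that any pathology in how edge-space hyperplanes sit inside vertex spaces can be detected and eliminated by passage to a further finite regular cover.

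The main obstacle is coordinating these lifts coherently across the graph, so that finite-sheet corrections at one vertex space extend compatibly to adjacent vertex spaces via the common edge space. The key tool is the well-embedded cone structure of $\widetilde X_V^\ast$ supplied by Proposition~\ref{prop:graded}(5) and Lemma~\ref{lem:well-embedded cones}: each edge space $X_E\to X_V$ is (a thickening of) a cone $Z_j\in\mathcal R_V$, so controlling the intersection pattern of cones in $\widetilde X_V^\ast$ lets one assemble a single finite cover of $X$ in which all edge-space pathologies are resolved simultaneously.
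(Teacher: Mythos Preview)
There is a genuine gap in your argument. When you apply Proposition~\ref{prop:graded} directly to $X$ (rather than to an augmented complex), the quotient vertex groups $\bar V$ are only guaranteed to be word-hyperbolic and virtually compact special, not finite; Proposition~\ref{prop:graded}(3) makes the \emph{edge} groups finite, not the vertex groups. Consequently, for your finite-index $\bar G_0\le \bar G$, the intersection $\bar G_0\cap \bar V$ is an infinite (finite-index) subgroup of $\bar V$, and the vertex spaces of your $X_0$ are the quotients $\widehat X_V/(\bar G_0\cap \bar V)$ --- not ``finite covers of $\widehat X_V$'' as you assert. Proposition~\ref{prop:graded}(4) tells you $\widehat X_V$ is special, but gives no control over these intermediate quotients, so the basic hypothesis of Theorem~\ref{thm:graph of cube complexes} (special vertex spaces) is not available. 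Your subsequent appeal to ``separability of hyperplane stabilizers and intersectors in the virtually compact special cube complexes $\widehat X_V$'' does not repair this, and the final paragraph about coordinating lifts is a description of the difficulty rather than a resolution of it.

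The paper avoids this by first augmenting $X$ to $X_a$, attaching the mapping cylinders of the identity maps $X_i\to X_i$ and of the hyperplane-carrier inclusions $N(h_{ij})\to X_i$. The identity maps make each $V_i$ an edge group of $G_a$, forcing every $\bar V_i$ to be finite; hence $\bar G_a$ (and therefore $\bar G$) is virtually free. One then works with the (infinite) cover $\widehat X$ corresponding to $\ker\phi$, whose vertex spaces are exactly the special complexes $\widehat X_V$. The hyperplane-carrier augmentation makes those carriers into cones of the presentation, so the well-embedded cones property directly yields the embedding, connected-intersection, and Helly conditions needed to verify Theorem~\ref{thm:graph of cube complexes} for $\widehat X$ (Lemmas~\ref{lem:intersection of cones} and~\ref{lem:intermediate special cover}). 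Finally, the virtually free group $\bar G$ acts cocompactly on the special complex $\widehat X$, and the separability of stabilizers and intersectors in a virtually free group lets one invoke Theorem~\ref{thm:shorter profinite criterion} to obtain a finite-index subgroup acting specially. Your proposal skips the augmentation and thereby loses both the finiteness of $\bar V$ and the automatic control over hyperplane carriers, and the remaining sketch does not supply a substitute.
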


Let $X$ be as in Theorem~\ref{thm:main2} with $\pi_1X$ denoted by $G$, and the Bass-Serre tree denoted by $T$. Suppose $\{X_i\}_{i=1}^n$ are vertex spaces of $X$. Let $h_{ij}$ be the collection of hyperplanes in the vertex space $X_i$ of $X$. This gives rise to a family of local-isometries $\{f_{ij}:N(h_{ij})\to X_i\}$ from carriers of $h_{ij}$ to $X_i$. For each vertex space $X_i$, let $g_i:X_i\to X_i$ be the identity map. Attach the mapping cylinders of $f_{ij}$'s and $g_i$'s to $X$ to form a new graph of cube complexes $X_a$. There is an embedding $X\to X_a$ and we still denote the image of $\{X_i\}_{i=1}^n$ under this embedding by $\{X_i\}_{i=1}^n$. Denote the extra vertex spaces of $X_a$ by $\{X_i\}_{i=n+1}^m$. Let $G_a$ and $T_a$ be the fundamental group and Bass-Serre tree of $X_a$. Note that $G_a=G$ and $X_a$ deformation retracts to $X$. Let $\{V_i\}_{i=1}^n$ be vertex groups of $G$, and $\{V_i\}_{i=1}^m$ be vertex groups of $G_a$.

By Lemma~\ref{lem:finite stature}, $G_a$ has finite stature with respect to its vertex groups. We apply Proposition~\ref{prop:graded} to $X_a$ and $G_a$. Let $\{\phi_i:V_i\to\bar V_i\}$ be the quotient maps such that they induce $\phi:G_a\to \bar G_a$. We also use $\phi$ to denote the associated map $G\to \bar G$. Let $X^*_i=\langle X_i\mid \Upsilon_i\rangle$ be the cubical presentation that induces $\phi_i:V_i\to\bar V_i$ and satisfies the conditions of Proposition~\ref{prop:graded}. Denote the Bass-Serre tree of $\bar G_a$ and $\bar G$ by $\bar T_a$ and $\bar T$. Recall that each edge group of $\bar G_a$ is finite. Thus each vertex group of $\bar G_a$ is also finite by the construction of $G_a$, hence $\bar G_a$ is virtually free. Let $\widehat X\to X$ and $\widehat X_a\to X_a$ be the covering map induced by $\ker \phi\le G=G_a$. Then $\widehat X$ (resp. $\widehat X_a$) is a tree of cube complexes whose underlying tree is $\bar T$ (resp. $\bar T_a$).

\begin{lem}
	\label{lem:intersection of cones}
Let $\widehat X_v$ be a vertex space of $\widehat X$. Let $\{\widehat Y_j\to \widehat X_v\}_{j=1}^3$ be local-isometries such that each $\widehat Y_j$ is either the carrier of a hyperplane in $\widehat X_v$, or an edge space of $\widehat X_v$. Then the following holds.
\begin{enumerate}
	\item Each $\widehat Y_j$ embeds.
	\item If $\widehat Y_1\cap \widehat Y_2\neq\emptyset$, then $\widehat Y_1\cap \widehat Y_2$ is connected.
	\item If $\{\widehat Y_1,\widehat Y_2,\widehat Y_3\}$ pairwise intersect, then $\widehat Y_1\cap\widehat Y_2\cap\widehat Y_3\neq\emptyset$.
\end{enumerate}
\end{lem}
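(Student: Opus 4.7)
My plan is to deduce this lemma from the well-embedded cones property of $\widetilde X^*_v$, where $X^*_v=\langle X_v\mid\mathcal{R}_v\rangle$ is the graded $C'(\tfrac{1}{24})$ small-cancellation presentation produced by Proposition~\ref{prop:graded}. First I would identify $\widehat X_v$ with the cubical part of $\widetilde X^*_v$: since $\ker\phi_v=\nclose{\pi_1 Z \,:\, Z\in\mathcal{R}_v}$ is the normal closure defining $\pi_1 X^*_v$, the preimage of $X_v$ in $\widetilde X^*_v$ is exactly the cover $\widehat X_v$. Both the hyperplane carriers in $X_v$ and the original edge spaces of $X$ incident to $X_v$ occur as edge spaces of the augmented complex $X_a$. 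So by Proposition~\ref{prop:graded}.(5)(a), each of them admits a finite regular cover $\dot Y$ that is a deformation retract of some relator $Z\in\mathcal{R}_v$, and Proposition~\ref{prop:graded}.(5)(b) combined with the fact that $\pi_1 Z\subset\ker\phi_v$ identifies the elevation of $Y$ to $\widehat X_v$ with a copy of $\dot Y$, and the elevation of $Z$ with an embedded copy $\widehat Z$ containing it. Applied to the three spaces $\widehat Y_j$ of the statement, this produces ``base cones'' $\widehat Z_j\subset\widehat X_v$ with $\widehat Y_j\subset\widehat Z_j$, and crucially $\pi_1\widehat Y_j=\pi_1\widehat Z_j$ (since $Z$ is a thickening of $\dot Y$).

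In $\widetilde X^*_v$, each cone is a topological cone over an embedded copy of some $Z$ in $\widehat X_v$, and distinct cones have distinct cone points, so any two distinct cones intersect precisely in the intersection of their bases. The well-embedded cones property from Proposition~\ref{prop:graded}.(5) therefore gives conditions (1), (2), (3) directly for the base cones $\{\widehat Z_j\}\subset\widehat X_v$. To transfer to the $\widehat Y_j$, condition (1) is immediate since $\widehat X_v$ is special by Proposition~\ref{prop:graded}.(4) and the edge-space case reduces to $\widehat Y_j\subset\widehat Z_j$. For (2) I would lift to $\widetilde X_v$, fix compatible lifts $\widetilde Y_j\subset\widetilde Z_j$, and set $H_j:=\stab_\Gamma(\widetilde Y_j)=\stab_\Gamma(\widetilde Z_j)$ where $\Gamma:=\pi_1\widehat X_v$; the components of $\widehat Y_1\cap\widehat Y_2$ and of $\widehat Z_1\cap\widehat Z_2$ are both indexed by the non-empty double cosets in $H_1\backslash\Gamma/H_2$, so the number of components for the $Y$'s is at most the number for the $Z$'s, which is at most one. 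For (3), the triple intersection of the $\widehat Z_j$ yields $g_2,g_3\in\Gamma$ with $\widetilde Z_1\cap g_2\widetilde Z_2\cap g_3\widetilde Z_3\neq\emptyset$; the identification of double cosets from (2) forces $\widetilde Y_1$, $g_2\widetilde Y_2$, and $g_3\widetilde Y_3$ to be three pairwise intersecting convex subcomplexes of the CAT(0) cube complex $\widetilde X_v$, whence Helly's theorem for convex subcomplexes produces a common point that projects to the desired point of $\widehat Y_1\cap\widehat Y_2\cap\widehat Y_3$.

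The main obstacle is the double-coset bookkeeping underlying (2) and (3): one must argue that the pairwise $\widehat Y$-intersections and $\widehat Z$-intersections are indexed by the same double cosets, and that the specific $g_2,g_3$ extracted from the triple $\widehat Z$-intersection are witnesses to pairwise non-empty $\widehat Y$-intersections as well. Both points rely on the equality $\pi_1\widehat Y_j=\pi_1\widehat Z_j$ coming from the thickening, which is the content of Proposition~\ref{prop:graded}.(5) and, indeed, the reason the augmentation step was introduced in the first place.
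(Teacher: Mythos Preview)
Your setup coincides with the paper's: you identify $\widehat X_v$ with the cubical part of $\widetilde X^*_v$, recognise each $\widehat Y_j$ as an edge space of $X_a$, invoke Proposition~\ref{prop:graded}(5)(a)(b) to conclude $\widehat Y_j=\dot Y_j$ and produce cones $\widehat Z_j\supset\widehat Y_j$ with $\pi_1\widehat Y_j=\pi_1\widehat Z_j$, and then apply well-embedded cones to the $\widehat Z_j$. The divergence is in the transfer step from the $\widehat Z_j$ to the $\widehat Y_j$.

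The paper carries out this transfer via a separate Lemma (Lemma~\ref{lem:deformation retract}), proved by a minimal disk-diagram argument inside $\widehat X_v$: one uses the retractions $r_j:\widehat Z_j\to\widehat Y_j$ to manufacture null-homotopic loops of the right combinatorial type, and shows that a minimal diagram over such a loop has no spurs or corner squares, hence is a point. Your route instead lifts to $\widetilde X_v$, exploits the equality $\stab(\widetilde Y_j)=\stab(\widetilde Z_j)$ to match the double cosets indexing components of $\widehat Y_i\cap\widehat Y_j$ and $\widehat Z_i\cap\widehat Z_j$, and for (3) feeds the resulting pairwise-intersecting convex lifts into the Helly property for CAT(0) cube complexes. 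Your argument is correct: the key point, which you isolate, is that since each $\widehat Z_i\cap\widehat Z_j$ is connected there is a \emph{unique} relevant double coset $H_i g H_j$, necessarily equal to the one witnessing $\widehat Y_i\cap\widehat Y_j\neq\emptyset$, and membership in that coset is already enough to force $\widetilde Y_i\cap g\widetilde Y_j\neq\emptyset$. Your approach is shorter and makes the role of the thickening hypothesis more transparent; the paper's disk-diagram lemma has the advantage of being stated purely at the level of the nonpositively curved quotient, without invoking Helly.
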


\begin{proof}
Let $X_i$ be the vertex space in $X$ covered by $\widehat X_v$. We also view $\widehat X_v$ (resp. $X_i$) as a vertex space of $\widehat X_a$ (resp. $X_a$). Since the covering map $\widehat X_v\to X_i$ corresponds to $\ker \phi_i$, we have $\widehat X_v$ is the cubical part of the universal cover of $X^*_i$.

Note that $\{\widehat Y_j\to \widehat X_v\}_{j=1}^3$ are edge spaces of $\widehat X_v$ in $\widehat X_a$. Let $\{Y_j\to X_i\}_{j=1}^3$ be edge spaces of $X_i$ in $X_a$ covered by $\{\widehat Y_j\to \widehat X_v\}_{j=1}^3$. By Proposition~\ref{prop:graded}.(5a), there are finite regular covers $\{\dot Y_j\to Y_j\}_{i=1}^3$ and relators $\{Z_j\}_{j=1}^3\in\Upsilon_i$ such that $Z_j$ is a thickening of $\dot Y_j$. However, Proposition~\ref{prop:graded}.(5b) implies that $\pi_1 \dot Y_j=\ker (\pi_1 Y_j\to \phi(\pi_1 Y_j))$, thus $\dot Y_j=\widehat Y_j$. Thus $Z_j\to X_i$ lifts to $Z_j\to \widehat X_v$, which is a thickening of $\widehat Y_j$. Since the universal cover $\widetilde X^*$ of $X^*$ has well-embedded cones, and each $Z_j$ is a cone of $\widetilde X^*$, Conclusions~(1), (2)~and~(3) hold for $\{Z_j\}_{j=1}^3$. By Lemma~\ref{lem:deformation retract} below, these conclusions also hold for $\{\widehat Y_j\}_{j=1}^3$.
\end{proof}

\begin{remark}
	\label{lem:intersection of cones1}
Lemma~\ref{lem:intersection of cones} also holds if each $\widehat Y_j$ is either a hyperplane of $\widehat X_v$ or an edge space of $\widehat X_v$. Indeed, since the carrier of a hyperplane is a thickening of a hyperplane, this statement can be deduced from Lemma~\ref{lem:intersection of cones} and Lemma~\ref{lem:deformation retract}.
\end{remark}

\begin{lem}
	\label{lem:deformation retract}
Let $Z$ be a nonpositively curved cube complex. Pick connected locally-convex subcomplexes $\{C_i\}_{i=1}^{3}$ and $\{C'_i\}_{i=1}^{3}$ such that each $C'_i\to Z$ is a thickening of $C_i\to Z$. Then
\begin{enumerate}
	\item if $C_1\cap C_2\neq\emptyset$ and $C'_1\cap C'_2$ is connected, then $C_1\cap C_2$ is connected;
	\item if $\{C_i\}_{i=1}^{3}$ pairwise intersect, $\cap_{i=1}^3C'_i\neq\emptyset$, and each $C'_i\cap C'_j$ is connected, then $\cap_{i=1}^3C_i\neq\emptyset$.
\end{enumerate}
\end{lem}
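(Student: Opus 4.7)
The plan is to work in the universal cover $\widetilde Z$ and exploit the interplay between lifts of the subcomplexes $C_i$ and $C'_i$. Since each inclusion $C_i\hookrightarrow C'_i$ induces an isomorphism on $\pi_1$, any given lift $\widetilde{C}'_i\subset \widetilde Z$ of $C'_i$ contains a single connected preimage $\widetilde{C}_i$ of $C_i$; this $\widetilde{C}_i$ is the unique lift of $C_i$ contained in $\widetilde{C}'_i$, and $\stab(\widetilde{C}'_i)=\pi_1C'_i=\pi_1C_i=\stab(\widetilde{C}_i)$. Both $\widetilde{C}_i$ and $\widetilde{C}'_i$ are simply-connected and locally convex, hence convex subcomplexes of the CAT(0) cube complex $\widetilde Z$.

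For part (1), given $x,y\in C_1\cap C_2$, choose a lift $\widetilde x$ sitting inside specific convex subcomplexes $\widetilde{C}_1\subset\widetilde{C}'_1$ and $\widetilde{C}_2\subset\widetilde{C}'_2$. Using the connectedness of $C'_1\cap C'_2$, pick a path $\gamma$ from $x$ to $y$ in $C'_1\cap C'_2$ and lift it to $\widetilde\gamma$ starting at $\widetilde x$. Then $\widetilde\gamma$ remains in $\widetilde{C}'_1$ and in $\widetilde{C}'_2$ since each $\widetilde{C}'_i$ is a connected component of the preimage of $C'_i$, so its endpoint $\widetilde y$ lies in $\widetilde{C}'_1\cap \widetilde{C}'_2$; as $y\in C_i$ and $\widetilde y\in \widetilde{C}'_i$, the point $\widetilde y$ must lie in the unique lift $\widetilde{C}_i$ of $C_i$ inside $\widetilde{C}'_i$, so $\widetilde y\in \widetilde{C}_1\cap\widetilde{C}_2$. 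This intersection is a nonempty convex subcomplex of $\widetilde Z$, hence connected, and a path from $\widetilde x$ to $\widetilde y$ in it projects to a path from $x$ to $y$ in $C_1\cap C_2$.

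For part (2), fix a lift $\widetilde p$ of some point of $C'_1\cap C'_2\cap C'_3$, which selects specific lifts $\widetilde{C}'_i$ and thereby specific $\widetilde{C}_i\subset\widetilde{C}'_i$ for $i=1,2,3$. For each pair $i\neq j$, nonemptiness of $C_i\cap C_j$ provides some $g\in\pi_1Z$ with $\widetilde{C}_i\cap g\widetilde{C}_j\neq\emptyset$, hence $\widetilde{C}'_i\cap g\widetilde{C}'_j\neq\emptyset$; since $C'_i\cap C'_j$ is connected and $\widetilde{C}'_i\cap\widetilde{C}'_j\ni\widetilde p$, the double coset $\pi_1C_i\cdot g\cdot\pi_1C_j$ equals $\pi_1C_i\cdot \pi_1C_j$. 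Writing $g=g_1g_2$ with $g_1\in\pi_1C_i$ stabilizing $\widetilde{C}_i$ and $g_2\in\pi_1C_j$ stabilizing $\widetilde{C}_j$, I obtain $\widetilde{C}_i\cap \widetilde{C}_j=g_1^{-1}(\widetilde{C}_i\cap g\widetilde{C}_j)\neq\emptyset$.

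With pairwise intersections of the three convex subcomplexes $\widetilde{C}_1,\widetilde{C}_2,\widetilde{C}_3$ of $\widetilde Z$ now in hand, the final step is the Helly property for convex subcomplexes of a CAT(0) cube complex: choosing $a\in\widetilde{C}_1\cap \widetilde{C}_2$, $b\in\widetilde{C}_1\cap \widetilde{C}_3$, $c\in\widetilde{C}_2\cap \widetilde{C}_3$, the median $m(a,b,c)$ in the median space $\widetilde Z$ lies on an $\ell^1$-geodesic from $a$ to $b$ and hence in $\widetilde{C}_1$, and symmetrically in $\widetilde{C}_2$ and $\widetilde{C}_3$, so $m(a,b,c)\in\widetilde{C}_1\cap\widetilde{C}_2\cap\widetilde{C}_3$; projecting it to $Z$ gives a point in $C_1\cap C_2\cap C_3$. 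The main technical subtlety is the double-coset step in (2), namely converting the raw nonemptiness of $C_i\cap C_j$ into nonemptiness of the specific lifts $\widetilde{C}_i\cap \widetilde{C}_j$ selected by $\widetilde p$---this is exactly where the connectedness hypothesis on the $C'_i\cap C'_j$ is used.
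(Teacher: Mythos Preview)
Your proof is correct and takes a genuinely different route from the paper's. The paper argues directly in $Z$: for (1) it supposes $C_1\cap C_2$ is disconnected, uses the retractions $r_i:C'_i\to C_i$ to produce a null-homotopic loop of positive minimal length that alternates between $C_1$ and $C_2$, and then runs a minimal-area disk diagram argument (no spurs, no exposed corners by local convexity) to force the diagram to be a point; (2) is handled by an analogous triangular disk diagram. By contrast, you pass to the universal cover and reduce everything to two standard CAT(0) cube complex facts: intersections of convex subcomplexes are convex (hence connected), and three pairwise-intersecting convex subcomplexes satisfy Helly via the median. The thickening hypothesis enters for you purely as $\pi_1 C_i=\pi_1 C'_i$, which pins down a unique lift $\widetilde C_i\subset\widetilde C'_i$ and makes your double-coset step go through; the paper instead uses the retractions $r_i$ explicitly to manufacture the required null-homotopies. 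Your argument is shorter and more conceptual, and it makes the role of connectedness of $C'_i\cap C'_j$ very transparent (transitivity of $\stab(\widetilde C'_i)$ on components of the preimage). The paper's argument, on the other hand, is more self-contained and stays within the disk-diagram toolkit already in play elsewhere in the paper.
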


\begin{proof}
Our assumption implies there is a deformation retraction $r_i:C'_i\to C_i$ for each $i$. For (1), we assume by contradiction that $C_1\cap C_2$ has at least two connected components $E_1$ and $E_2$. Let $d$ be the minimal length of null-homotopic loop in the 1-skeleton that travels from a vertex $v_1\in E_1$ to a vertex $v_2\in E_2$ in $C_1$ and travel back from $v_2$ to $v_1$ in $C_2$. It is clear that $d>0$. We show there exists at least one such loop, hence $d<\infty$. Pick vertices $w_i\in E_i$ for $i=1,2$ and let $\omega\subset C'_1\cap C'_2$ be a path from $w_1$ to $w_2$. Then the concatenation of $r_1(w)$ and $r_2(w)$ gives a null-homotopic loop as required (by approximation, we can assume this loop is in the 1-skeleton). Choose a disk diagram $D$ with minimal number of squares among disk diagrams bounded by all such loops with length $=d$. Then there cannot be a spur in $\partial D$, otherwise we can find a loop of length $<d$ satisfying our conditions; and there cannot be a length 2 subpath in $\partial D$ forming the corner of a 2-cube in $D$, otherwise by local-convexity of $C_1,C_2,E_1$ and $E_2$, we can pass to a disk diagrams with fewer squares bounded by a loop of length $d$ satisfying our conditions. Thus it follows from \cite[Lem~2.8]{WiseIsraelHierarchy} that $D$ is a single point,  contradicting $d>0$.

Now we prove (2). Let $d$ be the infimal length of null-homotopic loop in the 1-skeleton that travels inside $C_2$ from a vertex $v_{12}\in C_1\cap C_2$ to a vertex $v_{23}\in C_2\cap C_3$, then travel inside $C_3$ from $v_{23}$ to a vertex $v_{31}\in C_3\cap C_1$, and then travels inside $C_1$ from $v_{31}$ back to $v_{12}$. We show there exists at least one such loop, hence $d<\infty$. Pick vertices $w_{ij}\in C_i\cap C_j$ and  $w\in \cap_{i=1}^3C'_i$. Let $p_{ij}$ be a path from $w$ to $w_{ij}$ inside $C'_i\cap C'_j$. Let $p_1$ be the $r_1$-image of the concatenation 
$p_{12}p_{31}$. Define $p_2,p_3$ similarly. The concatenation $p_1p_2p_3$ gives a loop satisfying all the conditions. Assume by contradiction that $\cap_{i=1}^3C_i=\emptyset$, then $d>0$. Again we pick a disk diagram $D$ with minimal number of squares among disk diagrams bounded by all such loops with length $=d$, and argue that there canot be spurs and length 2 subpaths in $\partial D$ forming the corner of a 2-cube in $D$ by using the local-convexity of $C_i$'s and their intersections. Thus $D$ is a point and we reach a contradiction as before.
\end{proof}

Let $Y\to Z$ be a map and let $\widehat Z\to Z$ be a covering map. An \emph{elevation} $\widehat Y\to \widehat Z$ of $Y\to Z$ is a map where the composition $\widehat Y\to Y\to Z$ equals $\widehat Y\to\widehat Z\to Z$, and such that choosing basepoints so the above maps are basepoint preserving, we have $\pi_1\widehat Y$ equals the preimage of $\pi_1 \widehat Z$ in $\pi_1 Y$.

In the special case when the graph of cube complexes $X$ has only one vertex space, Lemma~\ref{lem:intersection of cones} implies the following statement, which is of independent interest. 
\begin{cor}
	\label{cor:connected intersection}
Let $X$ be a compact special cube complex such that $\pi_1 X$ is word-hyperbolic. Let $\{f_i:Y_i\to X\}_{i=1}^n$ be a collection of local-isometries with compact domains. Then there exists a finite regular cover $\widehat X$ of $X$ such that for any $\{\widehat Y_j\to \widehat X\}_{j=1}^3$ where each $\widehat Y_j$ is an elevation of an element in $\{Y_i\}$, the following holds:
\begin{enumerate}
	\item Each $\widehat Y_j$ embeds.
	\item If $\widehat Y_1\cap \widehat Y_2\neq\emptyset$ then $\widehat Y_1\cap \widehat Y_2$ is connected.
	\item If $\{\widehat Y_1,\widehat Y_2,\widehat Y_3\}$ pairwise intersect then $\widehat Y_1\cap\widehat Y_2\cap\widehat Y_3\neq\emptyset$.
\end{enumerate}
\end{cor}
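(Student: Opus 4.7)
The plan is to realize the setup of the corollary as a graph of nonpositively curved cube complexes whose only ``central'' vertex space is $X$, and then apply Lemma~\ref{lem:intersection of cones} to that vertex space. I would form $X_a$ with underlying graph having a central vertex labelled by $X$ and, for each $i$, an edge joining it to a leaf vertex labelled by $Y_i$; the associated edge space is $Y_i$ itself, with attaching maps $f_i:Y_i\to X$ and $\mathrm{id}:Y_i\to Y_i$. Each such edge space deformation retracts onto its leaf vertex space, so $X_a$ deformation retracts onto $X$ and $\pi_1 X_a\cong \pi_1 X$.

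Next I would verify the hypotheses of Theorem~\ref{thm:main2} for $X_a$. Because each $f_i$ is a local-isometry with compact domain, $\pi_1 Y_i$ is a quasiconvex subgroup of the word-hyperbolic group $\pi_1 X$, so every vertex group of $X_a$ is word-hyperbolic and every edge group is quasiconvex in its associated vertex groups. Since the trivial graph of groups with one vertex $\pi_1 X$ and no edges has finite stature with respect to $\{\pi_1 X\}$, Lemma~\ref{lem:finite stature} gives that $(\pi_1 X_a,\mathcal{V}_a)$ has finite stature with respect to its vertex groups.

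I would then apply Proposition~\ref{prop:graded} to $X_a$ to produce a graded $C'(\tfrac{1}{24})$ cubical presentation $X^{\ast}=\langle X\mid \R\rangle$ with well-embedded cones and an induced quotient $\phi_X:\pi_1 X\to \bar V$ whose kernel defines a finite regular cover $\widehat X\to X$; by Proposition~\ref{prop:graded}.(4) this cover $\widehat X$ is special. Inside the cover $\widehat X_a\to X_a$ determined by $\ker \phi$, the vertex space over $X$ is exactly $\widehat X$, and the edge spaces of $\widehat X_a$ incident to $\widehat X$ are precisely the elevations of the maps $Y_i\to X$ to $\widehat X$. Applying Lemma~\ref{lem:intersection of cones} to the vertex space $\widehat X$ with any three such edge spaces $\widehat Y_1,\widehat Y_2,\widehat Y_3$ then gives conclusions (1)-(3) of the corollary.

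The delicate point is bookkeeping: checking that edge spaces of $\widehat X_a$ incident to $\widehat X$ correspond bijectively to elevations of the original $Y_i\to X$. This is routine covering-space theory, because the components of the preimage of each $Y_i$-edge in $\widehat X_a$ have fundamental groups of the form $\pi_1 Y_i\cap g\,\pi_1\widehat X\,g^{-1}$ as $g$ varies over double coset representatives in $\pi_1 X$, which matches the definition of elevation used in Lemma~\ref{lem:intersection of cones}. No genuinely new obstacle arises; the substantive work has already been carried out in Proposition~\ref{prop:graded} and Lemma~\ref{lem:intersection of cones}.
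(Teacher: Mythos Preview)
Your approach matches the paper's, which derives the corollary as the one-vertex case of the setup preceding Lemma~\ref{lem:intersection of cones}. There is, however, a genuine gap: nothing in your construction forces $\widehat X$ to be a \emph{finite} cover. Proposition~\ref{prop:graded}.(3) only guarantees that the edge groups of $\bar G_a$ are finite, i.e.\ each $\phi_X(\pi_1 Y_i)$ is finite; it says nothing about the vertex group $\bar V_X = \phi_X(\pi_1 X)$, and hence $\ker\phi_X$ need not have finite index in $\pi_1 X$. Concretely, if the $\pi_1 Y_i$ are all trivial (or more generally if their normal closure has infinite index) then quotienting by the transections leaves $\pi_1 X$ infinite and $\widehat X$ is an infinite cover.

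The fix is to also attach the mapping cylinder of the identity map $\mathrm{id}\colon X\to X$ when building $X_a$. This makes $\pi_1 X$ itself an edge group of $G_a$, so Proposition~\ref{prop:graded}.(3) then forces $\phi_X(\pi_1 X)$ to be finite and $\widehat X\to X$ is a finite regular cover. This is precisely the role of the maps $g_i\colon X_i\to X_i$ in the paper's construction of $X_a$ (and the paper explicitly notes that ``each vertex group of $\bar G_a$ is also finite by the construction of $G_a$''). With that one addition your argument is complete and coincides with the paper's.
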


\begin{lem}
	\label{lem:intermediate special cover}
$\widehat X$ is a special cube complex.
\end{lem}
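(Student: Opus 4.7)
The plan is to verify the four specialness conditions directly for $\widehat X$: that each hyperplane embeds and is two-sided, and that no pair of hyperplanes self- or inter-osculates. First, each vertex space $\widehat X_v$ of $\widehat X$ is already a special cube complex, since the covering $\widehat X_v\to X_i$ corresponds to $\ker\phi_i$, and Proposition~\ref{prop:graded}.(4) ensures this yields a special cover. Next, the original edge spaces of $\widehat X$ embed into their adjacent vertex spaces by Lemma~\ref{lem:intersection of cones}.(1). The essential additional input is that, because we constructed $X_a$ by attaching mapping cylinders of the hyperplane carrier maps $N(h_{ij})\to X_i$, every hyperplane carrier $N(\widehat h)$ in $\widehat X_v$ is (an elevation of) an edge space in $\widehat X_a$. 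Consequently, Lemma~\ref{lem:intersection of cones} together with Remark~\ref{lem:intersection of cones1} applies uniformly to any collection of up to three subcomplexes of $\widehat X_v$, each of which is either a hyperplane carrier or an edge space: they embed, pairwise intersections are connected, and any three that pairwise intersect have nonempty common intersection.

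With this setup I would verify specialness by a case analysis. Each hyperplane $\widehat H$ of $\widehat X$ decomposes along the underlying tree of $\widehat X$ as a tree of hyperplanes $\widehat h_v\subset\widehat X_v$ glued along hyperplanes of the intermediate edge spaces. Embedding and two-sidedness of $\widehat H$ reduce to the corresponding properties in each $\widehat X_v$ (which hold since $\widehat X_v$ is special) together with the embedding of edge spaces into adjacent vertex spaces. A hypothetical self-osculation of $\widehat H$ at a $0$-cube would produce two $1$-cubes dual to $\widehat H$ at that vertex not spanning a $2$-cube. Within a single $\widehat X_v$ this is excluded by specialness of $\widehat X_v$, whereas the mixed cases that involve identifications across an edge space $\widehat Y$ would force $N(\widehat h)\cap\widehat Y$ to be disconnected, contradicting Lemma~\ref{lem:intersection of cones}.(2). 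An inter-osculation between two hyperplanes of $\widehat X$ is ruled out analogously: within $\widehat X_v$ it is excluded by specialness, and the remaining cases are handled by Lemma~\ref{lem:intersection of cones}.(3), which supplies a $2$-cube witnessing an honest crossing whenever the two hyperplane carriers and an interpolating edge space pairwise intersect.

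The main obstacle I anticipate is the bookkeeping that translates the local specialness conditions for $\widehat X$ (phrased in terms of individual hyperplanes and $2$-cubes) into the intersection statements for carriers and edge spaces supplied by Lemma~\ref{lem:intersection of cones}. In particular, each potential failure of specialness for $\widehat H$ must be pinpointed to a single vertex space $\widehat X_v$, with interior contributions separated from those arising from identifications across edge spaces, so that exactly one of the three conclusions of Lemma~\ref{lem:intersection of cones} can be invoked. Once this correspondence is carefully organized, the verification becomes mechanical, and we conclude that $\widehat X$ satisfies all four conditions of a special cube complex.
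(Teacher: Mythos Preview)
Your approach is correct and uses the same essential inputs as the paper, but the paper takes a more packaged route. Rather than verifying the four specialness pathologies of $\widehat X$ directly, the paper first observes that any failure of specialness is witnessed over a convex subcomplex lying above a finite subtree of $\bar T$, and then invokes Theorem~\ref{thm:graph of cube complexes}, a ready-made criterion for a graph of special cube complexes. The proof thus reduces to checking that theorem's four hypotheses on the attaching maps $\widehat Y\to\widehat X_v$: injectivity via Lemma~\ref{lem:intersection of cones}.(1); distinct hyperplanes of $\widehat Y$ extending to distinct hyperplanes of $\widehat X_v$ via connectivity of $h\cap\widehat Y$ (Remark~\ref{lem:intersection of cones1}); noncrossing hyperplanes of $\widehat Y$ remaining noncrossing in $\widehat X_v$ via the triple intersection $h_1\cap h_2\cap\widehat Y\neq\emptyset$ (Remark~\ref{lem:intersection of cones1}); and no inter-osculation of $\widehat Y$ by a hyperplane of $\widehat X_v$ via connectivity of $N_h\cap\widehat Y$ together with embeddedness of $N_h$ (Lemma~\ref{lem:intersection of cones}.(1),(2)).

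What you propose---tracking each hyperplane $\widehat H$ of $\widehat X$ as a tree of hyperplanes $\widehat h_v$ over $\bar T$ and excluding each pathology by case analysis---is essentially the proof of Theorem~\ref{thm:graph of cube complexes} unfolded in this setting. The bookkeeping you anticipate is real, and the paper simply avoids it by citing the theorem. One small imprecision in your sketch: Lemma~\ref{lem:intersection of cones}.(3) yields a nonempty triple intersection of locally convex subcomplexes, not directly a $2$-cube witnessing a crossing; to conclude that two hyperplanes genuinely cross inside $\widehat Y$ one needs the hyperplane version of the statement (Remark~\ref{lem:intersection of cones1}), exactly as the paper does when verifying condition~(3).
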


\begin{proof}
Recall that $\widehat X$ is a tree of cube complexes over $\bar{T}$. By Proposition~\ref{prop:graded}, each vertex space (hence each edge space) of $\widehat X$ is special. If $\widehat X$ fails to be special, then there exists a convex subcomplex of $\widehat X$ over a finite subtree of $\bar{T}$ that fails to be special. Thus it suffices to verify the condition of Theorem~\ref{thm:graph of cube complexes}

Theorem~\ref{thm:graph of cube complexes}.(1) follows from Lemma~\ref{lem:intersection of cones}.(1). Let $\widehat X_v$ be a vertex space of $\widehat X$ and let $\widehat Y\to\widehat X_v$ be an edge space of $\widehat X_v$ (we will treat $\widehat Y$ as a subspace of $\widehat X_v$). By Remark~\ref{lem:intersection of cones1}, $h\cap \widehat Y$ is connected for any hyperplane $h$ of $\widehat X_v$. Thus $h\cap \widehat Y$ is a hyperplane of $\widehat Y$. Thus Theorem~\ref{thm:graph of cube complexes}.(2) follows. For (3), choose two hyperplanes $h_1,h_2$ of $\widehat X_v$ such that $h_1\cap h_2\neq\emptyset$ and  both intersect $\widehat Y$, by Remark~\ref{lem:intersection of cones1}, $h_1\cap h_2\cap \widehat Y\neq\emptyset$, thus $h_1$ and $h_2$ cross inside $\widehat Y$. For (4), let $N_h$ be the carrier of a hyperplane $h$ of $\widehat X_v$ and suppose $h\cap \widehat Y\neq\emptyset$. Let $e_1\subset \widehat Y$ be an edge dual to $h$. Let $e$ be any edge dual to $h$. It suffices to show if $e\cap \widehat Y\neq\emptyset$, then $e\subset \widehat Y$. Since $N_h\cap \widehat Y$ is connected by Lemma~\ref{lem:intersection of cones}.(2), there is a combinatorial path in $N_h\cap \widehat Y$ from a point in $e_1$ to a point in $e$. Since $N_h$ is embedded (Lemma~\ref{lem:intersection of cones}.(1)), the local-convexity of $N_h\cap \widehat Y$
yields  $e\subset N_h\cap \widehat Y$.
\end{proof}

\begin{lem}
	\label{lem:special action}
$\bar{G}$ has a finite index subgroup that acts specially on $\widehat X$.
\end{lem}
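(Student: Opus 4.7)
The plan is to apply Theorem~\ref{thm:shorter profinite criterion} to the action $\bar G\acts\widehat X$. Hypotheses (1) and (2) come for free: since $\widehat X=\widetilde X/\ker\phi$ and $\bar G=G/\ker\phi$ with $G$ acting freely and cocompactly on $\widetilde X$, the induced action of $\bar G$ on $\widehat X$ is itself free (hence proper) and cocompact. Together with Lemma~\ref{lem:intermediate special cover}, which asserts $\widehat X$ is special, this reduces the lemma to verifying the two separability hypotheses of Theorem~\ref{thm:shorter profinite criterion}: separability of the hyperplane stabilizers $\stab_{\bar G}(A)$ and of the intersectors $\intersector_{\bar G}(A,B)$.

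I first establish the ambient structure of $\bar G$. By Proposition~\ref{prop:graded}, $\bar G$ is the fundamental group of a finite graph of groups whose vertex groups $\bar V_i$ are word-hyperbolic and virtually compact special, and whose edge groups are finite. The Bestvina--Feighn combination theorem applied in this setting implies that $\bar G$ is word-hyperbolic and each $\bar V_i$ is quasi-convex in $\bar G$. A Karrass--Solitar-type argument shows that a finite-index subgroup of $\bar G$ decomposes as a free product of finite-index subgroups of the $\bar V_i$'s with a finitely generated free group, and such a free product is itself compact special; therefore $\bar G$ is virtually compact special. By the Haglund--Wise theory of quasi-convex separability for virtually compact special hyperbolic groups, every quasi-convex subgroup of $\bar G$ is separable, and moreover finite unions of double cosets of quasi-convex subgroups are separable as well.

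To finish, I verify that $\stab_{\bar G}(A)$ and $\intersector_{\bar G}(A,B)$ are quasi-convex in $\bar G$. Since $\widehat X$ is special, each hyperplane $A$ has a two-sided, locally-convex carrier $N(A)$ whose lift to $\widetilde X$ is a convex subcomplex on which $\stab_{\bar G}(A)$ acts cocompactly, and the usual convex-cocompactness-implies-quasi-convexity criterion in the hyperbolic group $\bar G$ gives quasi-convexity of $\stab_{\bar G}(A)$. For the intersector, cocompactness of $\bar G\acts\widehat X$ implies that there are only finitely many $\bar G$-orbits of pairs of intersecting hyperplanes, so $\intersector_{\bar G}(A,B)$ decomposes as a finite union of double cosets $\stab(A)\cdot g_i\cdot\stab(B)$. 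The main obstacle I expect is the verification of separability of this union: double coset separability is substantially more delicate than subgroup separability and genuinely requires the full virtually-compact-special hypothesis on $\bar G$ rather than mere residual finiteness. Once the separability conditions are in hand, Theorem~\ref{thm:shorter profinite criterion} delivers the desired finite-index subgroup $F\le\bar G$ for which $\widehat X/F$ is a special cube complex, completing the proof.
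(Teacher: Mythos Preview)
Your overall strategy---verify the hypotheses of Theorem~\ref{thm:shorter profinite criterion}---matches the paper exactly, and your argument that $\intersector_{\bar G}(A,B)$ decomposes as a finite union of double cosets $\stab(A)g_i\stab(B)$ is the same as the paper's. However, you overlooked the key simplification that drives the paper's proof: $\bar G$ is virtually \emph{free}, not merely hyperbolic and virtually compact special. Recall that Proposition~\ref{prop:graded} is applied to the augmented complex $X_a$, not to $X$; in $X_a$ each original vertex space $X_i$ appears as an \emph{edge} space via the identity map $g_i\colon X_i\to X_i$, so $\bar V_i=\phi_i(V_i)$ is one of the finite edge groups guaranteed by Proposition~\ref{prop:graded}(3). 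It follows that every vertex group of $\bar G_a$ is finite, and hence $\bar G=\bar G_a$ is virtually free. The paper then invokes Lemma~\ref{lem:virtually free}: every finitely generated subgroup of a virtually free group is quasiconvex and separable, and double cosets of such subgroups are separable by Ribes--Zalesskii.

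Your proposed route via Bestvina--Feighn, Karrass--Solitar, and Haglund--Wise separability would establish that $\bar G$ is hyperbolic and virtually compact special, but your argument for quasiconvexity of $\stab_{\bar G}(A)$ has a genuine gap. You assert that the lift of $N(A)$ to $\widetilde X$ is a convex subcomplex on which $\stab_{\bar G}(A)$ acts cocompactly, but $\bar G$ does not act on $\widetilde X$; it is $G$ that acts there, and $G$ need not be hyperbolic, so convex-cocompactness in $\widetilde X$ yields no quasiconvexity statement in $\bar G$. What you would actually need is that the embedded hyperplane $A$ (or its carrier) is quasiconvex in $\widehat X$ itself, but $\widehat X$ is not simply connected, and local convexity of $N(A)\subset\widehat X$ does not directly give this. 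The paper sidesteps the issue entirely: since $\bar G$ is virtually free, the stabilizer $\stab_{\bar G}(A)$ is finitely generated (it acts properly and cocompactly on $A$) and is therefore automatically quasiconvex by Lemma~\ref{lem:virtually free}\eqref{vf1}.
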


\begin{proof}
Since $\bar{G}$ is virtually free, quasiconvex subgroups of $\bar{G}$ and double cosets of quasiconvex subgroups of $\bar{G}$ are separable by Lemma~\ref{lem:virtually free}.\eqref{vf2} below. Now we verify the assumption of Theorem~\ref{thm:shorter profinite criterion}. Note that $\bar{G}$ acts properly and cocompactly on $\widehat X$ by deck transformations, thus (1) and (2) follow. Since the collection of hyperplanes in $\widehat X$ is locally finite, the stabilizer of each hyperplane acts properly and cocompactly on it. Thus the stabilizer is finite generated, and is hence quasiconvex in $\bar{G}$ by Lemma~\ref{lem:virtually free}.\eqref{vf1}. Then Theorem~\ref{thm:shorter profinite criterion}.(4) follows. It remains to verify (3). Given hyperplanes $h_1,h_2$, we claim $H=\intersector_{\bar{G}}(h_1,h_2)$ is the union of finitely many double cosets of hyperplane stabilizers, and thus separable. Since $H$ is left $\stab(h_1)$-invariant and right $\stab(h_2)$-invariant, it is a union of double cosets of form $\stab(h_1)g\stab(h_2)$. Since $\stab(h_1)$ acts cocompactly on $h_1$, there are finitely many $\stab(h_1)$-orbits of hyperplanes of $\widehat X$ which intersect $h_1$. In particular, there exists a finite collection $\{g_i\}_{i=1}^{m}\subset H$ such that for any $g\in H$, there exists $g_i$ such that $gh_2$ and $g_ih_2$ are in the same $\stab(h_1)$-orbit. It follows that $H=\cup_{i=1}^{m}\stab(h_1)g_i\stab(h_2)$.
\end{proof}

\begin{lem}
	\label{lem:virtually free}
	Let $H$ be a f.g.\ free group. Then
	\begin{enumerate}
		\item \label{vf1} each f.g.\ subgroup of $H$ is quasiconvex;
		\item \label{vf2} each f.g.\ subgroup of $H$ is separable in $H$. Double cosets of f.g.\ quasiconvex subgroups are separable in $H$.
	\end{enumerate}
	The same conclusion holds if $H$ has a finite index subgroup that is a f.g.\ free group.
\end{lem}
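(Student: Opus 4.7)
The plan is to treat $H$ finitely generated free first, then reduce the virtually free case to it. The central tool is the Stallings graph of a finitely generated subgroup $K \le H$: a finite folded labeled graph $\Gamma_K$ immersing into the standard rose $R_H$, with $\pi_1 \Gamma_K = K$.

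For \eqref{vf1}, fix a free basis so that the Cayley graph $T$ of $H$ is a tree. The lift of $\Gamma_K$ to $T$ is the minimal $K$-invariant subtree $T_K$, and $K \acts T_K$ is cocompact because the quotient $T_K/K = \Gamma_K$ is finite. Setting $r = d(v_0, T_K)$ with $v_0$ the identity vertex, the unique geodesic in $T$ between $k_1v_0$ and $k_2v_0$ (for $k_i \in K$) decomposes as two segments of length at most $r$ from the endpoints to their projections on $T_K$, joined by a middle segment inside $T_K$. The middle segment lies in a uniform neighborhood of $K \cdot v_0$ by cocompactness, so $K$ is quasiconvex.

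For \eqref{vf2}, separability of $K$ is M.\ Hall's theorem, standardly proved by completing $\Gamma_K$ to a finite cover of $R_H$ in which any prescribed $g \notin K$ does not lift to a loop at the basepoint, yielding a finite index subgroup containing $K$ but not $g$. For the double coset statement with finitely generated $A, B \le H$, $g \in H$, and $h \notin AgB$, one forms a finite graph by joining $\Gamma_A$ and $\Gamma_B$ via a segment labeled by $g$; its immersion to $R_H$ represents $AgB$, and completion to a finite cover separates $h$ analogously. Alternatively one may cite Niblo's theorem on double coset separability in finitely generated free groups.

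When $H$ is virtually free with a finite index free subgroup $F \le H$, the conclusions transfer via standard finite index arguments. Any finitely generated $K \le H$ has $K \cap F$ of finite index and finitely generated, so by the free case $K \cap F$ is quasiconvex and separable in $F$; quasiconvexity passes to $H$ because $F \hookrightarrow H$ is a quasi-isometry, and $K$ is quasiconvex as a finite union of cosets of $K \cap F$. Separability of $K$ in $H$ follows because $K \cap F$ is separable in $H$ (separability in $F$ plus finite index) and $K$ is a finite union of cosets of $K \cap F$. Double cosets transfer by intersecting $AgB$ with $F$ and decomposing into finitely many double cosets of $A \cap F$ and $B \cap F$ in $F$, each separable by the free case. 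The main anticipated obstacle is double coset separability within the free case itself; the cleanest execution is to cite Niblo's theorem rather than construct the joined Stallings graph and its completion explicitly.
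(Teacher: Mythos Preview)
Your proposal is correct. The paper's own proof is essentially a string of citations: convexity of subtrees for \eqref{vf1}, Hall \cite{Hall49} for subgroup separability, Ribes--Zalesskii \cite{RibesZalesskii93} for double coset separability in free groups, and Minasyan \cite{Minasyan2006} and Gitik \cite{GitikDoubleHyperbolicLERF} for the extension to (word-hyperbolic, hence) virtually free groups.

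Your route differs in two respects. First, for double coset separability in the free case you invoke Niblo (or sketch a Gitik-style joined-Stallings-graph argument) rather than Ribes--Zalesskii's profinite argument; both are valid, though your Stallings sketch would need folding after the join to guarantee an immersion. Second, and more substantively, for the virtually free case you do a direct finite-index transfer rather than appealing to the hyperbolic machinery of Minasyan and Gitik. This is more elementary and self-contained; the one point to make precise is the double coset transfer: taking $F \trianglelefteq H$ of finite index and writing $A = \bigcup a_i(A\cap F)$, $B = \bigcup (B\cap F)b_j$, one gets $AgB$ as a finite union of translates of sets $g\cdot \big(g^{-1}(A\cap F)g\big)(B\cap F)$, where each bracketed product is a double coset of finitely generated subgroups of $F$ (normality of $F$ is needed here), hence separable in $F$ and therefore in $H$. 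Your phrase ``intersecting $AgB$ with $F$'' slightly obscures this, but the argument goes through.
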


\begin{proof}
Assertion (1) holds since subtrees of trees are convex.
The first part of (2) is proven by \cite{Hall49} and second part is proven in \cite{RibesZalesskii93}, with generalizations to separability of double cosets in hyperbolic groups \cite{Minasyan2006,GitikDoubleHyperbolicLERF}.
\end{proof}
	
\begin{proof}[Proof of Theorem~\ref{thm:main2}]
Let $\bar G'\le \bar G$ be a finite index subgroup acting specially on $\widehat X$, and let $G'\le G$ be the preimage of $\bar{G}'$ under $G\to\bar{G}$. Then $X'=X/G'=\widehat X/\bar G'$ is a finite sheet cover of $X$ that is special.
\end{proof}	

\subsection{Specialness implies finite stature}
\label{subsec:FGH}
Let $C$ be a convex subcomplex in a CAT$(0)$ cube complex $X$. Then there is a well-defined nearest point projection $\proj_{C}:X^0\to C^0$ with respect to the combinatorial distance on $X^0$, moreover, this projection extends to a cubical map $\proj_{C}:X\to C$, see \cite[Lem~13.8 and Rmk~13.9]{HaglundWiseSpecial}.
\begin{lem}
	\label{lem:intersection}
	Let $X$ be a compact nonpositively curved cube complex. Let $A$ and $B$ be  locally-convex subcomplexes of $X$. Suppose that for each immersed hyperplane $U\to X$, the map $N(U)\cong U\times[-\frac{1}{2},\frac{1}{2}]\to X$ is an embedding. Then for any pair of elevations $\widetilde A$ and $\widetilde B$ in $\widetilde X$, there exists a locally-convex subcomplex $K\subset A$ such that $K$ has an elevation $\widetilde K\subset\widetilde A$ with $\stab(\widetilde K)=\stab(\widetilde A)\cap\stab(\widetilde B)$. Moreover, $\widetilde K=\proj_{\widetilde A}\widetilde B$.
\end{lem}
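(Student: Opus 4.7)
The plan is to set $\widetilde K := \proj_{\widetilde A}(\widetilde B)$ and verify everything from this definition. Since cubical projection to a convex subcomplex is a cellular retraction sending convex subcomplexes to convex subcomplexes, $\widetilde K$ is a convex subcomplex of $\widetilde A$ (hence of $\widetilde X$). Let $K$ be the image of $\widetilde K$ under the covering $\widetilde X \to X$, so $K \subseteq A$. Convexity of $\widetilde K$ in $\widetilde X$ transfers, via the covering, to local convexity of $K$ in $X$, and $\widetilde K$ is an elevation of $K$ in $\widetilde A$.

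For the stabilizer identity, the containment $\stab(\widetilde A) \cap \stab(\widetilde B) \subseteq \stab(\widetilde K)$ is immediate from the $\pi_1 X$-equivariance of cubical projection: for such $g$,
$$g\widetilde K = g \proj_{\widetilde A}(\widetilde B) = \proj_{g\widetilde A}(g\widetilde B) = \proj_{\widetilde A}(\widetilde B) = \widetilde K.$$
For the reverse containment, suppose $g\widetilde K = \widetilde K$. The easier half is $g \in \stab(\widetilde A)$: from $\widetilde K \subseteq \widetilde A \cap g\widetilde A$ one notes that $\widetilde A$ and $g\widetilde A$ are both elevations of the locally-convex $A$, and distinct elevations are disjoint connected components of $p^{-1}(A)$ in $\widetilde X$, forcing $g\widetilde A = \widetilde A$.

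The main obstacle is establishing $g \in \stab(\widetilde B)$, and this is where the hyperplane hypothesis enters. The idea is to introduce the symmetric projection $\widetilde K' := \proj_{\widetilde B}(\widetilde A) \subseteq \widetilde B$, which is paired to $\widetilde K$ by a canonical combinatorial isomorphism through the hyperplanes that separate $\widetilde A$ from $\widetilde B$ (these are exactly the hyperplanes disjoint from both and ``between'' them). The carriers of these separating hyperplanes glue together into a product region $\widetilde K \times I^n$ in $\widetilde X$ with boundary components $\widetilde K$ and $\widetilde K'$. The assumption that $N(U) \cong U \times [-\tfrac{1}{2},\tfrac{1}{2}]$ embeds in $X$ means that distinct $\pi_1 X$-translates of any hyperplane of $\widetilde X$ have disjoint carriers, which gives the necessary rigidity: the product region, and hence $\widetilde K'$, is determined by $\widetilde K$ together with the choice of $g$-invariant hyperplane configuration. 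Since $g$ preserves $\widetilde K$ and $\widetilde A$, it preserves this whole configuration, so $g\widetilde K' = \widetilde K' \subseteq \widetilde B \cap g\widetilde B$, and disjointness of the elevations of $B$ then yields $g\widetilde B = \widetilde B$. The hardest technical step is verifying this rigidity cleanly, i.e.\ that $g$'s preservation of $\widetilde K$ and $\widetilde A$ really forces $g\widetilde K' = \widetilde K'$; I expect to carry this out by induction on the number of separating hyperplanes, using the carrier-embedding hypothesis at each step to identify the hyperplane $g$ sends each separating hyperplane to.
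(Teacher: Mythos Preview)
Your overall strategy---define $\widetilde K := \proj_{\widetilde A}(\widetilde B)$ up front and then verify the stabilizer identity---is reasonable, and your argument for the inclusion $\stab(\widetilde A)\cap\stab(\widetilde B)\subseteq\stab(\widetilde K)$ and for $\stab(\widetilde K)\subseteq\stab(\widetilde A)$ is fine. But the hard direction $\stab(\widetilde K)\subseteq\stab(\widetilde B)$ has a genuine gap, and the paper organizes the argument differently precisely to avoid it.

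The circularity is here: you write that ``$g$ preserves $\widetilde K$ and $\widetilde A$, so it preserves this whole configuration,'' meaning the bridge and the set of hyperplanes separating $\widetilde A$ from $\widetilde B$. But that set of hyperplanes is defined in terms of $\widetilde B$; until you know $g\widetilde B=\widetilde B$, you have no reason to expect $g$ to permute them. Your proposed fix---induct on the number of separating hyperplanes and use the carrier-embedding hypothesis to identify $g\tilde h$ with $\tilde h$---is the right instinct, but to make it work you need to know that $\widetilde K$ actually lies in $N(\tilde h)$ for the hyperplane you are about to cross (so that $\widetilde K\subset N(\tilde h)\cap gN(\tilde h)$ forces $g\tilde h=\tilde h$), and you need the slid copy to again be a projection so the induction can continue. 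Neither of these is addressed.

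The paper sidesteps this by running the induction in the \emph{other} direction: it does not start from $\stab(\widetilde K)$ and try to reach $\stab(\widetilde B)$. Instead it peels off one separating hyperplane $\tilde h$ (with carrier $\widetilde N$ meeting $\widetilde A$), proves the key claim $\stab(\widetilde A)\cap\stab(\widetilde B)\subseteq\stab(\widetilde N)$ directly---using that $g$ already stabilizes \emph{both} $\widetilde A$ and $\widetilde B$ to force $g\tilde h\cap\tilde h\neq\emptyset$, hence $g\tilde h=\tilde h$ by embeddedness---and then replaces $\widetilde A$ by the slid copy $\widetilde K_1'$ across $\tilde h$, which has strictly fewer hyperplanes separating it from $\widetilde B$. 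The full stabilizer equality (both inclusions) then falls out at each inductive step from the base case $\widetilde K=\widetilde A\cap\widetilde B$ together with the carrier-embedding hypothesis, and the identification $\widetilde K=\proj_{\widetilde A}\widetilde B$ is observed at the end. If you carry your induction through carefully you will end up reproducing essentially this argument; as written, though, the crucial step is assumed rather than proved.

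A smaller point: your claim that $\widetilde K$ is automatically an elevation of its image $K$ (i.e.\ a full connected component of $p^{-1}(K)$) is not immediate from convexity alone; it too hinges on the stabilizer computation, which is another reason the paper builds $K$ and $\widetilde K$ together rather than defining $K$ as an image.
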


\begin{proof}
As $A$ is embedded in $X$, $\stab(\widetilde A)=\pi_1 A$ up to conjugacy. First consider the case $\widetilde A\cap \widetilde B\neq\emptyset$. Let $\widetilde K=\widetilde A\cap \widetilde B$. Then $\widetilde K$ covers a connected component $K$ of $A\cap B$ and one readily verifies that such $K$ and $\widetilde K$ are as desired.

We prove the lemma by induction on the number of hyperplanes separating $\widetilde A$ and $\widetilde B$. Let $\tilde h$ be a hyperplane separating $\widetilde A$ and $\widetilde B$ such that the carrier $\widetilde N$ of $\tilde h$ satisfies $\widetilde N\cap \widetilde A\neq\emptyset$ (when $\widetilde A\cap\widetilde B\neq\emptyset$, such $\tilde h$ always exists, as any two disjoint convex subcomplexes of a CAT$(0)$ cube complex is separated by a hyperplane). Let $N$ be the embedded carrier in $X$ covered by $\widetilde N$. By the previous paragraph, $\widetilde K_1=\widetilde A\cap \widetilde N$ covers a component $K_1$ of $N\cap A$ and $\stab(\widetilde K_1)=\stab(\widetilde A)\cap \stab(\widetilde N)$. We identify $\widetilde N$ with $\tilde h\times[0,1]$ and identify $\widetilde K_1$ with $\widetilde K_1\times\{0\}$. Define $\widetilde K'_1=\widetilde K_1\times\{1\}$. As $N$ is embedded, $\stab(\widetilde K'_1)=\stab(\widetilde K_1)$ and $\widetilde K'_1$ covers a locally-convex subcomplex $K'_1\subset X$.
Now we claim:
\begin{enumerate}
	\item if $\tilde h'$ is a hyperplane separating $K'_1$ and $B$, then $\tilde h\cap \widetilde A=\emptyset$, hence $\tilde h'$ separates $\widetilde B$ from $\widetilde A$;
	\item $\stab(\widetilde A)\cap\stab(\widetilde B)\subset\stab(\widetilde N)$.
\end{enumerate}
For (1), we assume by contradiction that $\tilde h\cap \widetilde A\neq\emptyset$. Then $\tilde h'\cap \tilde h\neq\emptyset$, otherwise $\tilde h'$ and $\widetilde A$ are on the same side of $\tilde h$ and $\tilde h'$ cannot separate $K'_1$ and $B$. Then $\tilde h'$, $\widetilde N$ and $\widetilde A$ pairwise intersect, hence they have non-trivial intersection (cf.\ \cite[Lem~13.13]{HaglundWiseSpecial}). Hence $\tilde h'\cap  \widetilde K_1\neq\emptyset$ and $\tilde h'\cap \widetilde K_1\neq\emptyset$, which leads to a contradiction. For (2), let $g\in \stab(\widetilde A)\cap\stab(\widetilde B)$, then $g\tilde h$ separates $\widetilde A$ from $\widetilde B$ and $g\widetilde N\cap\widetilde A\neq\emptyset$. If $g\tilde h\cap \tilde h=\emptyset$, then $A$ and $g\tilde h$ are in the same side of $\tilde h$, and $A$ and $\tilde h$ are in the same side of $g\tilde h$. This contradicts that both $g\tilde h$ and $\tilde h$ separate $A$ from $B$. Thus $g\tilde h\cap \tilde h\neq\emptyset$. Hence $g\tilde h=\tilde h$ as each hyperplane of $X$ is embedded. Then (2) follows.

Claim (1) implies that the number of hyperplanes separating $K'_1$ and $\widetilde B$ is less than the number of hyperplanes separating $\widetilde A$ and $\widetilde B$. Thus by induction there is a locally-convex subcomplex $K'\subset K'_1$ with an elevation $\widetilde K'$ inside $\widetilde K'_1$ such that $\stab(\widetilde K')=\stab(\widetilde K'_1)\cap\stab(\widetilde B)=\stab(\widetilde K_1)\cap\stab(\widetilde B)=\stab(\widetilde A)\cap \stab(\widetilde N)\cap\stab(\widetilde B)=\stab(\widetilde A)\cap\stab(\widetilde B)$, where the last equality follows from claim (2). As $N$ is embedded, we can slide $K'$ through $N$ to obtain a locally-convex subcomplex $K\subset A$ as desired. The moreover statement follows from the construction of $\widetilde K$.
\end{proof}

\begin{thm}
	\label{thm:FGH}
Let $X$ be a compact nonpositively curved cube complex. Let $\{f_i:A_i\to X\}_{i=1}^n$ be a collection of local-isometries with compact domain. Suppose $X$ has a finite cover $\widehat X$ such that
\begin{enumerate}
	\item for each immersed hyperplane $U\to\widehat X$, the map $N(U)\cong U\times[-\frac{1}{2},\frac{1}{2}]\to \widehat X$ is an embedding;
	\item each elevation of $A_i$ to $\widehat X$ is an embedding.
\end{enumerate}
Then $(\pi_1 X,\{\pi_1 A_i\}_{i=1}^n)$ has finite stature.
\end{thm}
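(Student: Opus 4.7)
The plan is to use Lemma~\ref{lem:intersection} inside the finite cover $\widehat X$ to express each subgroup $K = H \cap C$, where $H = \pi_1 A_i$ and $C = \bigcap_{j \in J} g_j \pi_1 A_{i_j} g_j^{-1}$ is an intersection of $\pi_1 X$-conjugates of the subgroups $\pi_1 A_{i_j}$, in terms of the stabilizer of a locally-convex subcomplex of the compact embedded elevation $\widehat A = \widehat A_i^{(l)} \subset \widehat X$. Since $\widehat A$ is a finite CW complex, it has only finitely many subcomplexes, and this will bound the number of $H$-conjugacy classes of such $K$.

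Fix $H = \pi_1 A_i$ and an elevation $\widetilde A \subset \widetilde X$ with $\stab_{\pi_1 X}(\widetilde A) = H$, and set $\widehat H = H \cap \pi_1 \widehat X$, which equals $\pi_1 \widehat A$. Given an infinite $K = H \cap C$, let $L = K \cap \widehat H = \widehat H \cap C$. Since $\pi_1 X$ is countable we may enumerate $J$ countably as $j_1, j_2, \dots$. Iterated application of Lemma~\ref{lem:intersection} inside $\widehat X$ (justified by hypothesis (1), and by the fact that each successive $M_n$ produced is locally-convex in the previous $M_{n-1}$ and hence in $\widehat X$) yields a descending chain $\widehat A = M_0 \supset M_1 \supset M_2 \supset \cdots$ of locally-convex subcomplexes of $\widehat A$, together with elevations $\widetilde M_n \subset \widetilde A$ satisfying $\stab_{\pi_1 \widehat X}(\widetilde M_n) = \widehat H \cap \bigcap_{k=1}^n g_{j_k} \pi_1 A_{i_{j_k}} g_{j_k}^{-1}$. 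Since $\widehat A$ has only finitely many subcomplexes, the chain stabilizes at some $M_\infty = M_N$; because $M_n = M_N$ for all $n \ge N$ forces $\stab_{\pi_1 \widehat X}(\widetilde M_N)$ to be contained in each $g_{j_k} \pi_1 A_{i_{j_k}} g_{j_k}^{-1}$ with $k > N$, we conclude $L = \stab_{\pi_1 \widehat X}(\widetilde M_\infty)$.

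Next, the equivariance $g \proj_{\widetilde A}(\widetilde B) = \proj_{g\widetilde A}(g\widetilde B)$ of cubical projection yields $\stab_{\pi_1 X}(\widetilde A) \cap \stab_{\pi_1 X}(\widetilde B) \subseteq \stab_{\pi_1 X}(\proj_{\widetilde A}(\widetilde B))$, and iterating this gives $K \subseteq \stab_H(\widetilde M_\infty)$; combined with $K \cap \widehat H = L$, this sandwiches $L \le K \le \stab_H(\widetilde M_\infty)$. Since $\stab_H(\widetilde M_\infty) / L$ embeds into $H / \widehat H$, we have $[\stab_H(\widetilde M_\infty) : L] \le [H : \widehat H] < \infty$, so only finitely many subgroups of $H$ lie in this interval, which bounds the number of possible $K$ once $\widetilde M_\infty$ is fixed. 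As $\widehat A$ has only finitely many locally-convex subcomplexes $M_\infty$, and the elevations $\widetilde M_\infty \subset \widetilde A$ of a given $M_\infty$ form a single $\widehat H$-orbit (hence lie in a single $H$-conjugacy class of stabilizers), combining these counts yields finitely many $H$-conjugacy classes of infinite $K = H \cap C$, which is the required finite stature.

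The main obstacle is that Lemma~\ref{lem:intersection} gives an equality only of $\pi_1 \widehat X$-stabilizers (the hyperplane-carrier embedding hypothesis being available only in $\widehat X$), whereas finite stature is a statement inside $\pi_1 X$; the finite-extension counting above is precisely what bridges this gap, and it works because $\widehat H$ has finite index in $H$. A secondary point is the stabilization of the descending chain $\{M_n\}$ for a possibly infinite intersection, which is handled by the compactness of $\widehat A$ together with the countability of $\pi_1 X$.
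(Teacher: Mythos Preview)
Your proof is correct and follows essentially the same strategy as the paper: iterate Lemma~\ref{lem:intersection} inside $\widehat X$ to realize the $\pi_1\widehat X$-part of the intersection as the stabilizer of a locally-convex subcomplex of a compact $\widehat A$, then sandwich the full $K$ between $\stab_{\pi_1\widehat X}(\widetilde M_\infty)$ and $\stab_H(\widetilde M_\infty)$ and use the finite index of $\widehat H$ in $H$. Two small points of phrasing: in general one only has $H\subset\stab_{\pi_1X}(\widetilde A)$ (equality needn't hold since $A_i\to X$ need not embed), and the assertion that $\stab_H(\widetilde M_\infty)/L$ embeds in $H/\widehat H$ presumes $\widehat X\to X$ is regular---the paper makes this reduction explicitly, and in any case the index bound $[\stab_H(\widetilde M_\infty):L]\le[H:\widehat H]$ holds regardless, which is all you use.
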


\begin{proof}
Without loss of generality, we assume $\widehat X\to X$ is regular. For each $A_i$, there are finitely many $\pi_1\widehat X$-conjugacy classes of subgroups of form $H\cap (\pi_1 A_i)^g$ for $g\in\pi_1 X$. Let $\{H_{ij}\}$ be their representatives. Let $\mathcal{C}_1$ (resp. $\mathcal{C}_2$) be the collection of intersection of conjugates (resp. $\pi_1 \widehat X$-conjugates) of elements in $\{\pi_1 A_i\}$ (resp. $\{H_{ij}\}$). Then each element of $\mathcal{C}_1$ contains an element of $\mathcal{C}_2$ as a subgroup of finite index that is  uniformly bounded  above. 

Let $\{\widehat B_{ij}\}$ be elevations of $\{A_i\}$ corresponding to $\{H_{ij}\}$. Then it follows from Lemma~\ref{lem:intersection} each element in $\mathcal{C}_2$ corresponds to a locally-convex subcomplex of $\widehat B_{ij}$. Thus $(\pi_1 \widehat X,\{H_{ij}\})$ has finite stature. To show $(\pi_1 X,\{\pi_1 A_i\}_{i=1}^n)$ has finite stature, we need to control elements in $\mathcal{C}_1$ that contain a given element in $\mathcal{C}_2$ as a finite index subgroup. Let $C_1$ and $C_2$ be two elevations of elements in $\{f_i:A_i\to X\}_{i=1}^n$ corresponding to two conjugates of elements in $\pi_1 A_i$. Then the intersection of these two conjugates stabilizes $\proj_{C_1}C_2$ and $\proj_{C_2}C_1$. By iterating this observation and using the moreover statement of Lemma~\ref{lem:intersection}, for each  $H \in \mathcal{C}_1$ there exists a convex subcomplex $\widetilde  K \subset \widetilde X$ such that $\stab_{\pi_1\widehat X}(\widetilde K)\le H\le \stab_{\pi_1X}(\widetilde K)$ and $\stab_{\pi_1\widehat X}(\widetilde K)\backslash \widetilde K $ is a subcomplex of $\widehat X$ corresponding to an element in $\mathcal{C}_2$. Thus the theorem follows.
\end{proof}

\begin{remark}
If stabilizers of hyperplanes in $\widetilde X$ are separable and each $\pi_1 A_i$ is separable, then the assumptions of Theorem~\ref{thm:FGH} are satisfied.
\end{remark}

\begin{cor}\
	\label{cor:FGH}
\begin{enumerate}
	\item Suppose $X$ is a compact nonpositively curved cube complex with a finite special cover. Let $\{f_i:A_i\to X\}_{i=1}^n$ be a collection of local-isometries with compact domain. Then  $(\pi_1 X,\{\pi_1 A_i\}_{i=1}^n)$ has finite stature.
	\item If $X$ is a compact graph of nonpositively curved cube complexes such that each vertex group is special, and each morphism from an edge space to a vertex space is an embedding. Then $\pi_1 X$ has finite stature with respect to its vertex groups.
    \item If $X$ is a compact graph of nonpositively curved cube complexes such that each vertex group is special, and each edge group is separable. Then $\pi_1 X$ has finite stature with respect to its vertex groups.
\end{enumerate}
\end{cor}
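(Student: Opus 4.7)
The plan is to deduce all three parts from Theorem~\ref{thm:FGH}, which asks for a finite cover of the relevant cube complex in which the given local-isometries elevate to embeddings and in which hyperplane carriers are embedded product neighborhoods.

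For (1), I would start from a finite regular special cover $X_0 \to X$. In a compact special cube complex, every local-isometry with compact domain is a virtual retract, so each $\pi_1 A_i$ and the associated double cosets $\pi_1 A_i \cdot g \cdot \pi_1 A_i$ are separable (Haglund--Wise canonical completion). Thus for each $A_i$ there is a further finite cover of $X_0$ in which every elevation of $A_i$ embeds; intersecting finitely many such covers yields a single finite cover $\widehat X \to X$, still special, in which every elevation of every $A_i$ embeds. This is precisely the input to Theorem~\ref{thm:FGH}, which then gives finite stature of $(\pi_1 X, \{\pi_1 A_i\})$.

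For (2) and (3), my plan is to apply (1) with $X = X_v$ and $\{A_i\} = \{X_v\}\cup\{X_e\}_{e\ni v}$ at each vertex space. Each $X_v$ is compact and special by hypothesis, and the local-isometries from incident edge spaces have compact domain. In (2) the edge maps are embeddings by hypothesis; in (3) the separability of $\pi_1 X_e$ in $\pi_1 X$ descends to separability in $\pi_1 X_v$. Thus (1) applies and yields finite stature of $(\pi_1 X_v, \{\pi_1 X_v\}\cup\{\pi_1 X_e\}_{e\ni v})$, controlling all infinite intersections of $V$-conjugates of edge groups of $V$ up to finitely many $V$-conjugacy classes.

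It remains to globalize these vertex-local statements into finite stature of $(\pi_1 X, \mathcal V)$; for this, I would invoke the equivalence of conditions (2) and (3) in Lemma~\ref{lem:finitely many conjugacy classes0}. A subgroup $V\cap\bigcap_{e\in E}\stab(e)$ appearing in that condition is the pointwise stabilizer in $V$ of the subtree of $T$ spanned by $E$ together with the vertex $v$ fixed by $V$; decomposing this subtree edge-by-edge away from $v$ expresses the intersection as an edge-group intersection at $v$ restricted by analogous intersections at each neighboring vertex, and so on along the tree, with each restriction controlled up to finitely many conjugacy classes by the vertex-local finite-stature data just obtained. The main obstacle lies here: without hyperbolicity the remark following Lemma~\ref{lem:finitely many conjugacy classes0} is unavailable, so I cannot reduce $E$ to a finite set and must handle arbitrarily deep subtrees. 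The saving feature is that $V$-intersections remain localized at $v$ while the propagation of constraints at each neighbor is again controlled by local finite stature there; finiteness of the underlying graph $\Gamma$ then bounds the bookkeeping and closes the argument.
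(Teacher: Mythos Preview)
Your argument for (1) is correct and matches the paper's.

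For (2) and (3), however, the globalization step is a genuine gap. Knowing that each $(\pi_1 X_v,\{\pi_1 X_e\}_{e\ni v})$ has finite stature does \emph{not} imply that $(\pi_1 X,\mathcal V)$ has finite stature: the irreducible lattices in products of two trees discussed in the introduction are counterexamples. There each vertex space is a graph, hence special, and by your own part~(1) each vertex group has finite stature relative to its incident edge groups; yet the paper explicitly notes that such lattices do \emph{not} have finite stature with respect to their vertex groups. Your proposed inductive ``propagation of constraints along the tree'' therefore cannot work as stated---the transfer isomorphisms across edges can generate infinitely many distinct $V$-conjugacy classes of transections even when the local data at each vertex is finite. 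The finiteness of the underlying graph $\Gamma$ does not help, since the Bass--Serre tree $T$ is infinite and the relevant subtrees are unbounded.

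The paper's route for (2) is different and uses the embedding hypothesis in an essential way. It applies Lemma~\ref{lem:intersection} iteratively: given a subtree $S\ni v$, one walks outward from $v$; at each step the intersection of stabilizers already computed is realized (via the projection in Lemma~\ref{lem:intersection}) as the stabilizer of a convex subcomplex of the current edge space, which then sits as an embedded locally-convex subcomplex in the next vertex space. The output is that every transection in $V$ is $\stab(\widetilde K)$ for some locally-convex $K\subset X_v$, and compactness of $X_v$ gives the finiteness. The embedding of edge spaces is exactly what allows this transfer across edges; without it (as in the irreducible tree~$\times$~tree case) the argument breaks. Part (3) then reduces to (2) by using separability to pass to a finite cover in which edge spaces embed.
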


\begin{proof}
(1) holds by Theorem~\ref{thm:FGH} and the separability of $\pi_1 A_i$ (cf.\ \cite[Cor~7.9]{HaglundWiseSpecial} and \cite[Lem~8.1]{HaglundWiseSpecial}). (2) holds since by Lemma~\ref{lem:intersection} each transection corresponds to a locally-convex subcomplex of a vertex space. (3) follows from (2) as separability implies that $X$ has a finite cover satisfying the assumptions of (2).
\end{proof}

\begin{proof}[Proof of Theorem~\ref{thm:main}]
$(1)\Rightarrow(2)$ is Theorem~\ref{thm:main2}, and $(2)\Rightarrow(1)$ follows from Corollary~\ref{cor:FGH}.(1) and Lemma~\ref{lem:finitely many conjugacy classes0}.
\end{proof}

The following is a consequence of Theorem~\ref{thm:FGH} and Theorem~\ref{thm:main2}.
\begin{cor}
	\label{cor:separable hyperplanes}
Let $X$ split as a finite graph of compact nonpositively curved cube complexes such that each vertex group is word-hyperbolic. If the stabilizer of each hyperplane in $\widetilde X$ is separable. Then $X$ is virtually special.
\end{cor}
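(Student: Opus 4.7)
The plan is to construct a finite cover $\widehat X \to X$ satisfying the hypotheses of Corollary~\ref{cor:FGH}(2): its vertex spaces are special and its edge attaching maps are embeddings. Corollary~\ref{cor:FGH}(2) then yields finite stature of $\pi_1\widehat X$ with respect to its vertex groups, and Theorem~\ref{thm:main2} applied to $\widehat X$ gives virtual specialness of $\widehat X$, hence of $X$.

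The construction of $\widehat X$ rests on two consequences of the hyperplane separability hypothesis. First, for each vertex space $X_v$, the hyperplane stabilizers of $\widetilde X_v$ are precisely the intersections $\pi_1 X_v \cap \stab_{\pi_1 X}(\widetilde U)$ for hyperplanes $\widetilde U$ of $\widetilde X$ meeting $\widetilde X_v$; since separability of a subgroup restricts to arbitrary overgroups, these are separable in $\pi_1 X_v$. Together with word-hyperbolicity of $\pi_1 X_v$, the hyperbolic case of virtual specialness (Haglund-Wise, with Wise's and Agol's subsequent developments) implies that each $X_v$ is virtually special. Second, each edge group $\pi_1 X_e$ is exactly the stabilizer of the middle hyperplane of the thickened edge space $X_e\times[-1,1]$, so $\pi_1 X_e$ is itself a hyperplane stabilizer in $\widetilde X$ and is therefore separable in $\pi_1 X$. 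Using these two inputs, I would choose at each vertex $v$ a special finite cover of $X_v$ and then iteratively pass to further finite covers using separability of the edge groups, so that for each edge $e$ the attaching maps $X_e\to X_{\iota(e)}$ and $X_e\to X_{\tau(e)}$ lift to embeddings in the chosen vertex covers. Gluing matched elevations of the edge spaces yields the desired $\widehat X$.

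The main obstacle is the compatible cover construction. A special finite cover chosen at one vertex can impose constraints on the cover at an adjacent vertex via the shared edge space, and one must satisfy the lifting requirement simultaneously at every edge of the underlying graph, including multiple edges between the same vertex pair and self-loops. The key tool is separability of $\pi_1 X_e$ in $\pi_1 X$ (and hence in each vertex group containing it), which lets us enlarge any candidate cover to absorb the required embedding condition at a given edge; finiteness of the underlying graph ensures the iterative intersection of finite-index subgroups terminates in a single coherent cover. This is a standard amalgamation-of-covers argument, but executing it in the present graph-of-spaces setting where the vertex spaces are only hyperbolic (not a priori special) requires care in choosing the initial special covers so that the subsequent enlargements remain special.
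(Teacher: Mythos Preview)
Your overall strategy---establish finite stature and then invoke Theorem~\ref{thm:main2}---matches the paper, but you take a detour through Corollary~\ref{cor:FGH}(2) that creates the very obstacle you identify, while the paper's route avoids it entirely.

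The paper applies Theorem~\ref{thm:FGH} directly to $X$, taking the $A_i$ to be the vertex spaces $X_v$. Since each $X_v$ is already an embedded locally convex subcomplex of $X$, condition~(2) of Theorem~\ref{thm:FGH} is automatic in any cover. Condition~(1) follows from separability of the hyperplane stabilizers (this is the content of the Remark following Theorem~\ref{thm:FGH}). Thus $(\pi_1 X,\{\pi_1 X_v\})$ has finite stature, and Theorem~\ref{thm:main2} finishes. No preliminary cover with special vertex spaces is needed.

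Your route, by contrast, asks for a finite cover $\widehat X\to X$ whose vertex spaces are special. This means finding a finite-index $G'\le \pi_1 X$ with $G'\cap\pi_1 X_v$ contained in a prescribed finite-index $N_v\le\pi_1 X_v$ for every $v$. That amounts to separating each $N_v$ inside $\pi_1 X$, which is \emph{not} a consequence of separability of hyperplane stabilizers; it is essentially asking that the profinite topology of $\pi_1 X$ induce the full profinite topology on each vertex group. So the ``standard amalgamation-of-covers argument'' you invoke is not actually available here, and your worry about it is well-founded---it is a genuine gap, not merely a matter of care. (Your side remark that ``subsequent enlargements remain special'' is a non-issue, since finite covers of special cube complexes are special; the real problem is producing the global cover at all.)

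In short: drop the attempt to make vertex spaces special before proving finite stature. Use Theorem~\ref{thm:FGH} instead of Corollary~\ref{cor:FGH}(2); the embeddedness of vertex spaces in $X$ gives you what you need for free.
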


\begin{remark}
It is natural to ask whether Corollary~\ref{cor:connected intersection} holds under the weaker assumption that  edge groups are separable. By Theorem~\ref{thm:main2}, we need to show that separability of edges groups implies finite stature. This holds in the special case where the each edge space is superconvex in its vertex spaces (e.g. a graph of graphs), as we can use separability of edge groups to pass to a finite cover such that each edge space is superconvex and embedded in the vertex space, and then deduce from Lemma~\ref{lem:superconvex fiber-product intersection} that each transection can be realized as the fundamental group of a locally-convex subcomplex of some vertex space. However, the general case is not clear.
\end{remark}
	
\bibliographystyle{alpha}
\bibliography{wise}

%
%
\end{document}